\theoremstyle{plain}                
\newtheorem{theorem}{\indent\sc Theorem}[section] 
\newtheorem{lemma}[theorem]{\indent\sc Lemma}
\newtheorem{proposition}[theorem]{\indent\sc Proposition}
\theoremstyle{definition}           
\newtheorem{definition}[theorem]{\indent\sc Definition}
\newtheorem{example}[theorem]{\indent\sc Example}
\theoremstyle{remark}               
\newtheorem{remark}[theorem]{\indent\sc Remark}
\newtheorem{notation}[theorem]{\indent\sc Notation}
\newcommand{\R}{{\mathbb{R}}}                
\newcommand{\Z}{{\mathbb{Z}}}                
\begin{document}
\title[RDEs on Foliated Spaces]
{Stochastic Flows and
  Rough Differential Equations on Foliated Spaces}
\author
{Yuzuru Inahama and Kiyotaka Suzaki}

\subjclass[2010]{
  Primary 37H10; Secondary 37C85, 60H10, 60H30.
  %
  %
}
\keywords{
  Foliated space, Rough path theory, Stochastic flow.
}

\begin{abstract}
  Stochastic differential equations (SDEs)
  on compact
  foliated spaces were introduced a few years ago.
  As a corollary,
  a leafwise Brownian motion on a compact
  foliated space was obtained as a solution to an SDE.
  In this paper we construct
  stochastic flows associated with
  the SDEs by using rough path theory,
  which is something like a ``deterministic version"
  of It\^o's SDE theory.
\end{abstract}

\maketitle

\section{Introduction}
Let ${\mathcal M}$ be a manifold
or an Euclidean space
and let  $V_i ~(0 \le i \le d)$ be smooth
vector fields on ${\mathcal M}$.
Consider the following stochastic differential equation
(SDE for short) on ${\mathcal M}$
of Stratonovich type:
\begin{equation}\label{eq.190823-1}
  dx_t=\sum_{i=1}^d V_i (x_t)
  \circ dw^i_{t} + V_0 (x_t) dt,
  \quad
  x_0=m \in {\mathcal M},
\end{equation}
where $(w_t)_{t \ge 0}=(w^1_t, \ldots, w^d_t)_{t \ge 0}$ is
$d$-dimensional Brownian motion.
We assume for simplicity that $V_i~(0 \le i \le d)$
are sufficiently nice
so that a unique solution exists for every $m$
and the explosion of solution never occurs.

An ${\mathcal M}$-valued
random field $(x(t,m))_{t \ge 0, m \in {\mathcal M}}$
is called the stochastic flow associated with
SDE \eqref{eq.190823-1}
if the process $t \mapsto x(t,m)$ solves
SDE \eqref{eq.190823-1}  for every $m$.
It is well-known that, under mild assumptions, such a
stochastic flow exists and $m \mapsto x(t,m)$ is
a diffeomorphism for every $t \ge 0$, a.s.
(See Kunita \cite{Ku} for example.)
Stochastic flows play an indispensable role in SDE theory.

In a sense, the most difficult part of the theory
of stochastic flow is the existence of the flow.
The reason is as follows.
Since SDE  \eqref{eq.190823-1}  is solved for each fixed $m$,
the exceptional subset on which the SDE does not hold
depends on $m$.
However, there are uncountably many $m$'s.
So, it is not obvious a priori if there exists a common
exceptional subset independent of $m$.

A standard (and probably the only)
method in It\^o calculus  to overcome this difficulty is
Kolmogorov's continuity criterion
(see \cite[Section 1.4]{Ku} for example).
Loosely, this criterion can be viewed as
an ``averaged" H\"older continuity of $(t,m) \mapsto x(t,m)$.

In this paper
we are concerned with SDEs on
compact foliated spaces.
First, let us quickly recall foliated spaces.
A foliated manifold is a $n$-dimensional manifold with a decomposition into  connected, injectively immersed submanifolds of the same dimension $p$.
It is also required that the decomposition is locally
of the form $\sqcup\{B_{1}\times \{z\}\,:\, z\in B_{2}\}$, where $B_{1}$ is a connected open subset of $\R^{p}$ and $B_{2}$ is an open subset of $\R^{n-p}$.
The components are called the leaves.
A foliated space is a topological space defined by
allowing the transversal $B_{2}$ to be an open subset of a more general locally compact, second countable, metrizable space
instead of one of Euclidean space $\R^{n-p}$.
In particular, a foliated space embedded in a manifold is called a lamination. Foliated manifolds, laminations, and
consequently foliated spaces are regarded as generalization of dynamical systems. For example, it is known that a locally free action of a connected Lie group on a locally compact, separable, metric space induces a foliated space with the the orbits as the leaves (\cite[Theorem 11.3.9]{CC1}).

In \cite{G}, Garnett introduced Brownian motion along the
leaves on a compact foliated Riemannian manifold to extend
the ergodic theory of dynamical systems to that of foliated manifolds.
Nowadays such a stochastic process and the invariant measures are called a leafwise Brownian motion and harmonic measures.
Candel \cite{C} improved Garnett\rq s approach by using a method
for solving evolution equations and the Hille-Yosida theorem
to construct a Markov semigroup generated by a leafwise elliptic differential operator on a compact foliated space. We call such a process a leafwise diffusion process.
For a recent development of the ergodic theory
of foliated spaces with harmonic measures, see \cite{Ngu} for example.

It was \cite{Suz} that first introduced and studied SDEs on a compact foliated space.
It should be noted that \cite{Ka} discusses about some SDEs
on a special foliated manifold.
It was shown in \cite{Suz} that if $V_i~(0 \le i \le d)$
are leafwise smooth vector fields on a
compact foliated space ${\mathcal M}$,
then SDE \eqref{eq.190823-1} has a unique solution
without explosion for every $m$.
The solution gives a leafwise diffusion process.
In particular, a leafwise
Brownian motion on ${\mathcal M}$
was obtained from a solution to an SDE.
In the case of foliated spaces,
however, the existence of a stochastic flow
associated with SDE \eqref{eq.190823-1} is not known,
mainly because Kolmogorov's criterion is not available
for the following reasons.
The criterion can be used only
if ${\mathcal M}$ looks locally like a Euclidean space,
but a foliated space is in general just a locally compact,
separable metric space.
Even when ${\mathcal M}$ happens to have a manifold structure,
it looks very hard to prove an averaged H\"older continuity
since many objects are assumed to be only continuous
in the transversal direction.
Therefore, proving the existence seems highly non-trivial
(and almost impossible in the framework of It\^o calculus).

To overcome this difficulty, we use rough path theory
instead of standard SDE theory in It\^o calculus.
Rough path theory can be viewed as a deterministic
version of SDE theory.
In particular, rough differential equations (differential equations in the rough path sense)
are generalized controlled ordinary differential equations
and involve no probability measures.
In what follows, we call them RDEs for short.
It is also known that flows associated RDEs
exist and have nice properties.
Therefore, RDE theory is basically a kind of
real analysis and, in particular,
the solution map for an RDE is  continuous
in all input data including the initial value
(Lyons' continuity theorem).
It is of course possible to reprove the existence of
stochastic flow associated SDE \eqref{eq.190823-1}
on a Euclidean space under mild assumptions.

The main objective of this paper is
to prove the existence of the stochastic flow
associated SDE  \eqref{eq.190823-1}
on a compact foliated space by using rough path theory.
Once RDEs are introduced on a compact foliated space,
it turns out that
almost nothing is very difficult from the viewpoint of rough path theory.
We believe that this shows the power of rough path theory.

The following is our main theorem.
Let $\mu$ be the $d$-dimensional Wiener measure on
$C_0 ([0,T], \R^d)$,
the space of continuous paths in $\R^d$
which start at the origin.
We denote by $(w_t)_{0 \le t \le T}= (w_t^1, \ldots, w_t^d)_{0 \le t \le T}$
the canonical realization of $d$-dimensional Brownian motion.
Here, $T \in (0, \infty)$ is an  arbitrary time horizon.
The terminologies
in the statement will be explained in details in later sections.

\begin{theorem}\label{thm.Main}
  Let ${\mathcal M}$ be a compact foliated spaces
  and assume that
  $V_i~(0 \le i \le d)$ are leafwise
  $C^{k+3}$-vector fields on ${\mathcal M}$ $(k \ge 0)$.
  Then, a stochastic flow
  associated with SDE \eqref{eq.190823-1} exists.
  Moreover, the flow is almost surely
  a leaf preserving leafwise
  smooth diffeomorphism
  of ${\mathcal M}$ for each fixed $t \in [0,T]$.

  A more precise statement is as follows.
  There exist a $\mu$-null set ${\mathcal N}$
  and an ${\mathcal M}$-valued random field $\{x (t,m, w) \}$ indexed by
  $(t,m) \in [0,T] \times {\mathcal M}$ defined on the probability space
  $(C_0 ([0,T], \R^d), \mu)$ such that the following hold:
  \begin{itemize}
    \item
          For every $m \in {\mathcal M}$ and
          $w \notin {\mathcal N}$, the mapping $t \mapsto x (t,m, w)$ is
          continuous and solves SDE \eqref{eq.190823-1}
          with initial condition $m$.
    \item
          For every
          $t \in [0,T]$ and $w \notin {\mathcal N}$,
          the mapping $m \mapsto x (t,m, w)$ is a leaf preserving leafwise
          $C^k$-diffeomorphism of ${\mathcal M}$.
    \item
          For every
          $w \notin {\mathcal N}$,
          the mapping $t \mapsto x (t, \bullet, w)$ is continuous from $[0,T]$
          to the space of leafwise
          $C^k$-diffeomorphisms of ${\mathcal M}$.
  \end{itemize}
\end{theorem}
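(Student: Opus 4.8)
The plan is to move the entire problem into rough path theory, where the solution map of a differential equation is deterministic and continuous in every input, so that a single exceptional set can be chosen independently of the initial point. To begin, I would fix once and for all a $\mu$-null set ${\mathcal N}_0$ outside of which the canonical Stratonovich lift $\mathbf{w}=(1,w,\mathbb{W})$ of $w$, with $\mathbb{W}_{s,t}^{ij}=\int_s^t (w_u^i-w_s^i)\circ dw_u^j$, is a genuine geometric $\alpha$-Hölder rough path for a fixed $\alpha\in(1/3,1/2)$, and outside of which the dyadic piecewise-linear approximations $w^{(n)}$ converge to $\mathbf{w}$ in the rough path metric. Both are classical facts about Brownian motion. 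Every construction below is carried out pathwise for $w\notin{\mathcal N}_0$, and in the end one sets ${\mathcal N}:={\mathcal N}_0$.

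Since ${\mathcal M}$ is compact, I would cover it by finitely many foliation charts (flow boxes) $\varphi_\alpha\colon U_\alpha\xrightarrow{\ \sim\ } B_1^\alpha\times B_2^\alpha$, with $B_1^\alpha\subset\R^p$ connected open and $B_2^\alpha$ an open subset of the transversal model space. In such a chart the leafwise vector fields $V_i$ are represented by maps $(y,z)\mapsto\tilde V_i^\alpha(y,z)$ that are $C^{k+3}$ in the leaf variable $y$, with all leafwise derivatives jointly continuous in $(y,z)$, and merely continuous in the transversal variable $z$. For each frozen $z$ and each $w\notin{\mathcal N}_0$, the RDE
\[
 dY_t=\sum_{i=1}^d \tilde V_i^\alpha(Y_t,z)\,d\mathbf{w}_t+\tilde V_0^\alpha(Y_t,z)\,dt
\]
has a unique solution by Lyons' universal limit theorem; its time-$t$ solution map $y\mapsto\Phi_t^{\alpha,z}(y)$ is a $C^k$-diffeomorphism onto its image, depending continuously on $t$ in the $C^k$-topology (standard RDE flow theory); and, the decisive point, by Lyons' continuity theorem the family $(y,z)\mapsto\Phi_t^{\alpha,z}(y)$ together with its leafwise derivatives up to order $k$ is locally uniformly continuous, since nearby $z,z'$ give RDEs whose coefficients are close in the $C^{k+3}$-in-$y$ norm and hence have close solutions. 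This continuity in the transversal direction is exactly what Itô calculus cannot supply.

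Next I would glue the local flows into a global object. Because the $V_i$ are tangent to the leaves, the RDE solution started at $m$ stays, by uniqueness, inside the leaf through $m$; hence on each overlap $U_\alpha\cap U_\beta$ the locally defined flows solve the same RDE along the same leaf and therefore coincide. A partition-of-unity patching argument, literally the same as for deterministic leafwise ODE flows but now with the rough input $\mathbf{w}$, then yields, for every $t\in[0,T]$ and $w\notin{\mathcal N}$, a globally defined map $x(t,\bullet,w)\colon{\mathcal M}\to{\mathcal M}$ which is a leaf preserving leafwise $C^k$-map, continuous in $t$ into the space of leafwise $C^k$-maps. Solving the RDE backwards in time produces the inverse maps, so $x(t,\bullet,w)$ is a leafwise $C^k$-diffeomorphism, giving the second and third bullets. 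Finally, to connect with \eqref{eq.190823-1}, I would use the Wong--Zakai identification in its rough-path form: for $w\notin{\mathcal N}_0$ the ODE solutions driven by $w^{(n)}$ converge to $x(\cdot,m,w)$ uniformly in $(t,m)$ on ${\mathcal M}$, while for each fixed $m$ they converge almost surely to the Stratonovich solution of \eqref{eq.190823-1}; comparing the two limits identifies $x(\cdot,m,w)$ with a version of the SDE solution, which is the first bullet.

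The genuinely new work is not in the rough-path estimates, which are off the shelf, but in two places: setting up RDEs coherently on a foliated space so that the local theory above is even meaningful, and checking that the local RDE flows patch into a bona fide leafwise diffeomorphism of ${\mathcal M}$ while the exceptional set stays independent of $m$. The delicate point throughout is the transversal regularity: the transversal model space is only locally compact and metrizable, so continuity in $z$ has to be extracted from the continuity theorem for RDEs applied uniformly over the compact base ${\mathcal M}$, and the identification with the Stratonovich solution must be carried out without silently reintroducing an $m$-dependent null set. Everything else is a routine, if lengthy, transcription of classical RDE flow theory into the foliated setting.
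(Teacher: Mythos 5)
Your proposal follows essentially the same route as the paper: a single null set coming from the Brownian rough path lift, chart-localized RDEs with frozen transversal coordinate solved by Lyons' theory, transversal continuity of the flow and its leafwise derivatives extracted from the continuity theorem applied to the coefficients, time-reversal for the inverse flow, and Wong--Zakai together with the known (Suzaki) SDE solution on each leaf for the identification with \eqref{eq.190823-1}. The only real divergence is presentational: the paper defines RDE solutions on ${\mathcal M}$ intrinsically (a Davie--Bailleul formulation via leafwise $C^3$ test functions), proves leaf-confinement and a time-concatenation lemma, and glues along chains of charts with cutoff coefficients --- which is what your ``partition-of-unity patching'' must amount to in practice, since flows are concatenated in time rather than patched by a partition of unity.
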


\begin{proof}
  This follows immediately from Propositions \ref{pr.LI}, \ref{pr.homeo}, \ref{pr.diffeo} and \ref{pr.version of SDE}.
  (In fact, a slightly stronger result than this one
  follows from these propositions.)
  An explicit form of ${\mathcal N}$ will be given in \eqref{def.nullset}.
\end{proof}

Although leafwise diffusion processes on foliated spaces were already studied (see {\cite{C, CC2, G} for example)
before \cite{Suz},
it was done from the viewpoint of Markov semigroups and processes,
not of stochastic analysis.
(Here, the term ``stochastic analysis" is used in a
narrower sense than usual.
It means It\^o's SDE theory
and related topics such as rough path theory, Malliavin calculus
and path space analysis.)
Generally speaking, stochastic analysis is very powerful and one could obtain many deep results when it is available.
Indeed, stochastic analysis over Riemannian manifolds is a very rich research topic.
We hope our present work, together with \cite{Suz},
would pave the way for full stochastic analysis over
foliated spaces.

The paper is organized as follows. In Section \ref{sec.foliated spaces} and \ref{sec.rough paths}, we review some fundamental facts for foliated spaces and rough path theory, respectively. Section \ref{sec.RDE on foliated spaces} is devoted to solve RDEs on a compact foliated space (Proposition \ref{pr.gl.sol}) and prove that the solutions define flows of leaf preserving homeomorphisms (Proposition \ref{pr.homeo}) or diffeomorphisms (Proposition \ref{pr.diffeo}).
Finally, we show the existence of stochastic flow of leafwise homeomorphisms associated with SDE \eqref{eq.190823-1} (Proposition \ref{pr.version of SDE}) in Section \ref{sec.applications}. Moreover, our rough path
approach enables us to improve the measurability of the
strong solution constructed in \cite{Suz} of the SDE \eqref{eq.190823-1} (Proposition \ref{pr.version of SDE}). As other applications,
we prove a support theorem (Proposition \ref{pr.support}) and a large deviation principle (Proposition \ref{pr.LD}).

\section{Preliminaries from foliated spaces}\label{sec.foliated spaces}

First of all we introduce some notation and basic facts of foliated spaces.
Let $W_{1}$, $W_{2}$ be topological spaces and $U$ an open subset of $\R^{p}\times W_{1}$.
Let $k$ be a non-negative integer.
A function $f\colon U\to \R$ is said to be of class $C_{L}^{k}$ on $U$ if $f(\cdot,z)$ is of $C^{k}$ for any $z$ and
\[
  U\ni (y,z)\mapsto \partial^{\alpha}_{y}f(y, z)=\frac{\partial^{i_{1}+i_{2}+\cdots +i_{p}}}{\partial^{i_{1}}y^{1}\cdots \partial^{i_{p}}y^{p}} f(y,z)\in \R
\]
is continuous for any multi-index $\alpha=(i_{1}, i_{2},\dots, i_{p})$ with $|\alpha|=i_{1}+i_{2}+\cdots +i_{p}\le k$. A map
$f\colon U\to \R^{d}$ is said to be of class $C_{L}^{k}$ if each of the component functions is of class  $C_{L}^{k}$ on $U$.
Let $V$ be an open subset of $\R^{d}\times W_{2}$.
A map $f\colon U\to V$ is said to be of class $C_{L}^{k}$ if it is locally of the form $f(y, z)=\left(f_{1}(y, z), f_{2}(z)\right)$, where $f_{1}$ is of class $C_{L}^{k}$ and $f_{2}$ is continuous.
In this paper
we sometimes use the term \lq\lq{}leafwise $C^{k}$\rq{}\rq{}
meaning the smoothness defined above. In particular,
we call a map $f\colon U\to V$ a leafwise smooth map if it is of class $C_{L}^{k}$ for any non-negative integer $k$.

Let $\mathcal{M}$, $\mathcal{Z}$ be locally compact, separable, metrizable  spaces. $\mathcal{M}$ is called
a $p$-dimensional foliated space (modelled transversely on $\mathcal{Z}$)
if there exist an open cover $\left\{U_{\alpha}\right\}$ of $\mathcal{M}$ and
homeomorphisms $\left\{\varphi_{\alpha}\colon U_{\alpha}\to B_{\alpha, 1}\times B_{\alpha, 2}\right\}$ such that if $U_{\alpha}\cap U_{\beta}\neq \emptyset$, then $\varphi_{\beta}\circ \varphi_{\alpha}^{-1}\colon \varphi_{\alpha}(U_{\alpha}\cap U_{\beta})\to \varphi_{\beta}(U_{\alpha}\cap U_{\beta})
$
is leafwise smooth,
where $B_{\alpha, 1}$ is
a connected open set in $\R^{p}$ and $B_{\alpha, 2}$ is an open  set in $\mathcal{Z}$.
Such a pair $(U_{\alpha}, \varphi_{\alpha})$ is called a foliated chart and
$\left\{U_{\alpha}\right\}$ of $\mathcal{M}$ is called a foliated atlas.
For convenience we sometimes write $(y_{\alpha}, z_{\alpha})$ instead of $\varphi_{\alpha}$
and regard $U_{\alpha}$ as $B_{\alpha, 1}\times B_{\alpha, 2}$ endowed with the product metric
\[
  \textrm{dist}((y_{\alpha}, z_{\alpha}), (\tilde{y}_{\alpha}, \tilde{z}_{\alpha}))
  =|y_{\alpha}-\tilde{y}_{\alpha}|+\textrm{d}_{\mathcal{Z}}(z_{\alpha}, \tilde{z}_{\alpha}),
\]
where $|\,\cdot\,|$ is the standard Euclidean norm
and $\textrm{d}_{\mathcal{Z}}$ is any metric inducing the topology of $\mathcal{Z}$.
A plaque is a set of the form $\varphi_{\alpha}^{-1}(B_{\alpha, 1}\times \left\{z\right\})$.
We may assume that $\left\{U_{\alpha}\right\}$ is regular, that is:
\begin{itemize}
  \item[($1$)]For each $\alpha$, $\overline{U_{\alpha}}$ is a compact subset of a foliated chart $\left(W_{\alpha}, \psi_{\alpha} \right)$ and $\varphi_{\alpha}=\psi_{\alpha}|_{U_{\alpha}}$. Hence we can consider the plaques of $\overline{U_{\alpha}}$.
  \item[($2$)]$\mathcal{U}$ is locally finite.
  \item[($3$)]Given foliated charts $\left(U_{\alpha}, \varphi_{\alpha}\right)$, $\left(U_{\beta}, \varphi_{\beta}\right)$, and a plaque $P\subset U_{\alpha}$, then $P$ meets at most one plaque of $\overline{U_{\beta}}$.
\end{itemize}
For any $m\in \mathcal{M}$, we set
$$
  \begin{aligned}
    \mathcal{L}_{m}=\left\{x\in \mathcal{M}
    \colon
    \right. & \text{ there exist plaques } P_{1}, P_{2}, \dots, P_{n}                                                                            \\
            & \left.\text{ such that } m\in P_{1}, x\in P_{n} \text{ and } P_{i}\cap P_{i+1}\neq \emptyset \text{ for } 1\le i\le n-1  \right\}.
  \end{aligned}
$$
The subset $\mathcal{L}_{m}$ of $\mathcal{M}$ is called the leaf passing through $m\in \mathcal{M}$.
Let $\{L_{\lambda}\}_{\lambda\in \Lambda}$ be the set of distinct leaves in $\mathcal{M}$. Then
$\mathcal{M}$ is decomposed into the leaves
$\left\{\mathcal{L}_{\lambda}\right\}_{\lambda\in \Lambda}$.
One can easily see that each of the leaves is a $p$-dimensional smooth manifold.
A foliated chart $(U, (y, z))$ containing $m\in \mathcal{M}$  naturally induces a basis
\begin{equation}\label{eq.basis}
  \left\{\left(\frac{\partial}{\partial y^{1}}\right)_{m},
  \left(\frac{\partial}{\partial y^{2}}\right)_{m}, \dots,
  \left(\frac{\partial}{\partial y^{p}}\right)_{m}
  \right\}
\end{equation}
of $T_{m}(\mathcal{L}_{m})$, where $T_{m}(\mathcal{L}_{m})$
is the tangent space of $\mathcal{L}_{m}$ at $m$.
A leafwise $C^{k}$ vector field $V$ on $\mathcal{M}$ is a map
$V\,:\,\mathcal{M}\ni m\mapsto V(m)\in T_{m}(\mathcal{L}_{m})$ whose components
$\left\{\tilde{V}^{i}(y, z)\right\}_{i=1}^{p}$ with respect to the basis $(\ref{eq.basis})$ are leafwise $C^{k}$ in every foliated chart.
References for these fundamentals
and the following examples are found in \cite[Chapter $3$ and $11$]{CC1} and \cite[Chapter $\mathrm{II}$]{MS}.

\begin{example}[the mapping torus (suspension) of a topological dynamical system]
  Let $\mathcal{Z}$ be a compact metric space and
  $F\colon \mathcal{Z}\to \mathcal{Z}$ a homeomorphism. The pair $(\mathcal{Z}, F)$ is called a topological dynamical system.
  We define a $\Z$-action on $\R\times \mathcal{Z}$ by
  $(y, z)\mapsto (y-n, F^{n}(z))$ for $n\in \Z$.
  The quotient space $\mathcal{M}$=$(\R\times \mathcal{Z})/\Z$ is a
  compact one-dimensional foliated space modelled transversely on $\mathcal{Z}$.
\end{example}

\begin{example}[the inverse limit of inverse system of compact manifolds and submersions]
  Let $\{\mathcal{M}_{i}, f_{i}\}_{i\in \Z_{\ge 0}}$ be an
  inverse system, the spaces $\mathcal{M}_{i}$ being compact manifolds of the same dimension $p$ and each of the
  bonding maps $f_{i}\colon \mathcal{M}_{i+1}\to \mathcal{M}_{i}$ being
  a submersion but not a diffeomorphism. Then the
  inverse limit space
  \[
    \mathcal{M}={\mathop{\varprojlim}\limits}_{i\in \Z_{\ge 0}}\mathcal{M}_{i}
    =\{(x_{i})_{i\in \Z_{\ge 0}}\colon f_{i}(x_{i+1})=x_{i} \text{ for any } i\in \Z_{\ge 0}\}
  \]
  is a $p$-dimensional foliated space modelled transversely on a Cantor set.
\end{example}

Next we introduce mapping spaces and jets between foliated spaces in the same way as in the case of manifolds (See \cite[Chapter $2$-$4$]{H} for example).
Let $\mathcal{X}$ be a locally compact, separable, metrizable space and $\mathcal{Y}$ a Polish space, i.e., a separable, completely metrizable space.
We denote by $C(\mathcal{X}, \mathcal{Y})$ the totality of
continuous maps from $\mathcal{X}$ into $\mathcal{Y}$.
The compact-open topology on $C(\mathcal{X}, \mathcal{Y})$ is
generated by the sets of the form
\[
  \{f\in C(\mathcal{X}, \mathcal{Y})\colon f(K)\subset V\}
\]
where $K\subset \mathcal{X}$ is compact and $V\subset \mathcal{Y}$ is open.
For any metric that induces the topology of $\mathcal{Y}$,
the compact-open topology is
the same as that of uniform convergence on every
compact set of $\mathcal{X}$
and hence
$C(\mathcal{X}, \mathcal{Y})$ is a Polish  space.
In particular, if $\mathcal{X}$ is compact and
$\textrm{d}_{\mathcal{Y}}$ is
any (complete)
metric that induces the topology of
$\mathcal{Y}$,
then $C(\mathcal{X}, \mathcal{Y})$ can be metrized
by the (resp. complete) metric
\[
  \textrm{d}_{C(\mathcal{X}, \mathcal{Y})}(f, g)=\sup_{x\in \mathcal{X}}\textrm{d}_{\mathcal{Y}}(f(x), g(x)).
\]
Hence, for any two metrics
$\textrm{d}_{\mathcal{Y}}, \tilde{\textrm{d}}_{\mathcal{Y}}$
that induce the topology of $\mathcal{Y}$,
$\sup_{x\in \mathcal{X}}\textrm{d}_{\mathcal{Y}}(f(x), f_n(x))
  \to 0$ as $n \to \infty$
if and only if
$\sup_{x\in \mathcal{X}} \tilde{\textrm{d}}_{\mathcal{Y}}(f(x), f_n(x))
  \to 0$ as $n \to \infty$.
(The most typical example of $\mathcal{X}$
is a compact interval.
In later sections,
$\mathcal{Y}$ will often be $\overline{U_{\alpha}}$,
whose metric is the product metric induced by $\overline{U_{\alpha}}\simeq \overline{B_{1, \alpha}}\times \overline{B_{2, \alpha}}$ rather than the subspace metric induced by $\overline{U_{\alpha}}\subset \mathcal{M}$}.)

Let $\mathcal{M}$ and $\mathcal{N}$ be foliated spaces modelled
transversely on $\mathcal{Z}_{1}$, $\mathcal{Z}_{2}$ of
dimension $p_{1}$, $p_{2}$ respectively.
It is well-known that locally compact, separable, metrizable spaces are Polish spaces.
A map $f\colon\mathcal{M}\to \mathcal{N}$ is called
foliation preserving
if the image of each leaf in $\mathcal{M}$ is contained in a leaf in $\mathcal{N}$.
In particular,
a map $f\colon\mathcal{M}\to \mathcal{M}$ is called
leaf preserving if $f(m)\in \mathcal{L}_{m}$ for any $m\in \mathcal{M}$.
We denote $C_{L}(\mathcal{M}, \mathcal{N})$ by the totality of
foliation preserving continuous maps from $\mathcal{M}$ into $\mathcal{N}$. It is easy to see that $C_{L}(\mathcal{M}, \mathcal{N})$ is a closed subset of $C(\mathcal{M}, \mathcal{N})$.
Hence $C_{L}(\mathcal{M}, \mathcal{N})$ is also a Polish space.
A pair of foliated charts $(U, \varphi)$ for $\mathcal{M}$ and $(V, \psi)$ for $\mathcal{N}$ is adapted to $f$ if $f(U)\subset V$. Then the map
\[
  \psi\circ f\circ \varphi^{-1}\colon\varphi(U)\to \psi(V)
\]
is called the local representation of $f$ in the given charts, at the point $m$ if $m\in U$.
The local representation is of the form $\psi\circ f\circ \varphi^{-1}(y, z)=(f_{1}(y, z), f_{2}(z))$.
Given a non-negative integer $k$, we call the map $f$ a $C_{L}^{k}$-map if it has local representation
of class $C_{L}^{k}$ at all points in $\mathcal{M}$.
One can easily see that if $f$ is a $C_{L}^{k}$-map then every local representation is of class $C_{L}^{k}$.
We denote by $C_{L}^{k}(\mathcal{M}, \mathcal{N})$ the
totality of $C_{L}^{k}$-maps from $\mathcal{M}$ into $\mathcal{N}$. A homeomorphism $f\colon\mathcal{M}\to \mathcal{N}$ is called a $C_{L}^{k}$-diffeomorphism
if $f\in C_{L}^{k}(\mathcal{M}, \mathcal{N})$ and
$f^{-1}\in C_{L}^{k}(\mathcal{N}, \mathcal{M})$.
Note that $C_{L}^{0}(\mathcal{M}, \mathcal{N})=C_{L}(\mathcal{M}, \mathcal{N})$ and  if $f$ is a $C^{k}_{L}$-diffeomorphism
then the restriction $f|_{\mathcal{L}_{m}}$ to the leaf $\mathcal{L}_{m}$ in $\mathcal{M}$ is a $C^{k}$-diffeomorphism
onto the leaf $\widetilde{\mathcal{L}}_{f(m)}$ in $\mathcal{N}$
for any $m\in \mathcal{M}$.
We denote by ${\rm Diff}_{L}^{k}(\mathcal{M}, \mathcal{N})$
the totality of $C_{L}^{k}$-diffeomorphisms from $\mathcal{M}$ onto $\mathcal{N}$.
We put $C_{L}^{k}(\mathcal{M})=C_{L}^{k}(\mathcal{M}, \R)$, ${\rm Diff}_{L}^{k}(\mathcal{M})={\rm Diff}_{L}^{k}(\mathcal{M}, \mathcal{M})$, and
${\rm Homeo}_{L}(\mathcal{M})={\rm Diff}_{L}^{0}(\mathcal{M})$ for simplicity.

For $k\ge 1$, a $k$-jet from $\mathcal{M}$ into $\mathcal{N}$ is an
equivalence class $[m, f, U]_{k}$, where $U\subset \mathcal{M}$ is an open set, $m\in U$, and $f\colon U\to \mathcal{N}$ is a $C_{L}^{k}$-map. The equivalence relation is defined so that
$[m, f, U]_{k}=[m^{\prime}, f^{\prime}, U^{\prime}]_{k}$ if $m=m^{\prime}$ and
local representations $f=(f_{1}, f_{2})$ and $f^{\prime}=(f^{\prime}_{1}, f^{\prime}_{2})$ in some (and hence any) pair of foliated charts adapted to $f$ and $f^{\prime}$
satisfy $\partial^{\alpha}_{y}f_{1}=\partial^{\alpha}_{y}f^{\prime}_{1}$
and $f_{2}=f^{\prime}_{2}$
at $m$ for any multi-index $\alpha$ with $|\alpha|\le k$.
We denote the $k$-jet of $f$ at $m$ by
\[
  j_{L}^{k}f(m)=[m, f, U]_{k}
\]
and the set of all $k$-jets from $\mathcal{M}$ into $\mathcal{N}$
by $J_{L}^{k}(\mathcal{M}, \mathcal{N})$.

Consider the case when $\mathcal{M}=\R^{p_{1}}\times \mathcal{Z}_{1}$, $\mathcal{N}=\R^{p_{2}}\times \mathcal{Z}_{2}$.
Let $U$ be an open subset of $\R^{p_{1}}\times\mathcal{Z}_{1}$ and $f\in C_{L}^{k}(U, \R^{p_{2}}\times\mathcal{Z}_{2})$.
$f$ is expressed as $f(y, z)=(f_{1}(y, z), f_{2}(z))$ for $(y, z)\in U$. Hence the $k$-jet of $f$ at $(y, z)\in U$ has a
canonical representative
\[
  j_{L}^{k}f(y, z)\simeq(f_{1}(y, z), f_{2}(z), \{\partial_{y}^{\alpha}f_{1}(y, z)\}_{1\le|\alpha|\le k})\in \R^{p_{2}}\times\mathcal{Z}_{2}
  \times \prod_{\ell=1}^{k}\mathcal{L}_{\text{\textrm{sym}}}^{\ell}((\R^{p_{1}})^{\ell}, \R^{p_{2}}),
\]
where $\mathcal{L}_{\text{\textrm{sym}}}^{k}((\R^{p_{1}})^{k}, \R^{p_{2}})$
is the linear space of symmetric $k$-linear maps from
$(\R^{p_{1}})^k$ to $\R^{p_{2}}$.
Conversely, we see that any element of $\R^{p_{2}}\times\mathcal{Z}_{2}
  \times \prod_{\ell=1}^{k}\mathcal{L}_{\text{\textrm{sym}}}^{\ell}((\R^{p_{1}})^{\ell}, \R^{p_{2}})$ comes from a unique $k$-jet at $(y, z)$.
Thus we can identify
\[
  \begin{aligned}
    J_{L}^{k}(\R^{p_{1}}\times\mathcal{Z}_{1}, \R^{p_{2}}\times\mathcal{Z}_{2})
     & \simeq\R^{p_{1}}\times \mathcal{Z}_{1}\times \R^{p_{2}}\times
    \mathcal{Z}_{2}\times \prod_{\ell=1}^{k}\mathcal{L}_{\text{\textrm{sym}}}^{\ell}((\R^{p_{1}})^{\ell}, \R^{p_{2}})                                                                  \\
     & \simeq \R^{p_{1}}\times\R^{p_{2}}\times \prod_{\ell=1}^{k}\mathcal{L}_{\text{\textrm{sym}}}^{\ell}((\R^{p_{1}})^{\ell}, \R^{p_{2}})\times \mathcal{Z}_{1}\times\mathcal{Z}_{2}.
  \end{aligned}
\]
If $U\subset \R^{p_{1}}\times \mathcal{Z}_{1}$ and $V\subset \R^{p_{2}}\times\mathcal{Z}_{2}$ are open sets then
$J_{L}^{k}(U, V)$ is an open subset of $J_{L}^{k}(\R^{p_{1}}\times\mathcal{Z}_{1}, \R^{p_{2}}\times\mathcal{Z}_{2})$.

Let $(U, \varphi)$, $(V, \psi)$ be foliated charts for $\mathcal{M}$, $\mathcal{N}$. The map defined by
\[
  \theta_{U, V}\colon J_{L}^{k}(U, V)\ni j_{L}^{k}f(m)\mapsto j_{L}^{k}(\psi\circ f\circ\varphi^{-1})(\varphi(m))\in J_{L}^{k}(\varphi(U), \psi(V))
\]
is a bijection. Therefore, if we regard $(J_{L}^{k}(U, V), \theta_{U, V})$
as a foliated chart and introduce the topology on $J_{L}^{k}(\mathcal{M}, \mathcal{N})$ by these foliated charts, then
we see that $J_{L}^{k}(\mathcal{M}, \mathcal{N})$ is a foliated space. For each $f\in C_{L}^{k}(\mathcal{M}, \mathcal{N})$, $f$ induces a map $j_{L}^{k}f\colon\mathcal{M}\ni m\mapsto j_{L}^{k}f(m)\in J_{L}^{k}(\mathcal{M}, \mathcal{N})$. One can easily see that
this map is a $C_{L}^{0}$-map.
The next proposition is proved in the same way as in the case of manifolds.
Refer to the original proof
of \cite[Theorem 4.3]{H}.

\begin{proposition}\label{pr.embedding}
  The map
  \[
    j_{L}^{k}\colon C_{L}^{k}(\mathcal{M}, \mathcal{N})\to C_{L}(\mathcal{M}, J_{L}^{k}(\mathcal{M}, \mathcal{N}))
  \]
  is injective and the image is closed.
\end{proposition}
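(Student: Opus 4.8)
The plan is to follow the classical argument for manifolds (\cite[Theorem 4.3]{H}), checking that the transversal variables cause no trouble. Injectivity will be immediate: the target-point projection $\beta\colon J_{L}^{k}(\mathcal{M},\mathcal{N})\to\mathcal{N}$, $j_{L}^{k}f(m)\mapsto f(m)$, is well defined (it is built into the $0$-jet part of the equivalence relation), so if $j_{L}^{k}f=j_{L}^{k}g$ as maps $\mathcal{M}\to J_{L}^{k}(\mathcal{M},\mathcal{N})$ then $f=\beta\circ j_{L}^{k}f=\beta\circ j_{L}^{k}g=g$.

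For the closedness of the image, recall that $J_{L}^{k}(\mathcal{M},\mathcal{N})$ is a foliated space, hence locally compact, separable and metrizable, so $C_{L}(\mathcal{M},J_{L}^{k}(\mathcal{M},\mathcal{N}))$ is a Polish space and it suffices to prove sequential closedness. Let $f_{n}\in C_{L}^{k}(\mathcal{M},\mathcal{N})$ with $j_{L}^{k}f_{n}\to F$ in the compact-open topology. The source projection $J_{L}^{k}(\mathcal{M},\mathcal{N})\to\mathcal{M}$ and the target projection $\beta$ are continuous; since $j_{L}^{k}f_{n}(m)$ has source $m$ for every $n$, so does $F(m)$, and $f_{0}:=\beta\circ F$ is a continuous map $\mathcal{M}\to\mathcal{N}$ with $f_{n}=\beta\circ j_{L}^{k}f_{n}\to f_{0}$ uniformly on compacta (on a compact $K$ one uses local compactness of $J_{L}^{k}(\mathcal{M},\mathcal{N})$ to choose a compact neighbourhood of $F(K)$ containing $j_{L}^{k}f_{n}(K)$ for large $n$, on which $\beta$ is uniformly continuous). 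Since $C_{L}(\mathcal{M},\mathcal{N})$ is closed in $C(\mathcal{M},\mathcal{N})$, we get $f_{0}\in C_{L}(\mathcal{M},\mathcal{N})$. It then remains to show $f_{0}\in C_{L}^{k}(\mathcal{M},\mathcal{N})$ and $F=j_{L}^{k}f_{0}$, which are local statements.

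Fix $m_{0}\in\mathcal{M}$, and choose a foliated chart $(U,\varphi)$ for $\mathcal{M}$ around $m_{0}$ with $\overline{U}$ compact and contained in a larger chart, together with a foliated chart $(V,\psi)$ for $\mathcal{N}$ with $f_{0}(\overline{U})\subset V$; by uniform convergence $f_{n}(\overline{U})\subset V$ for all large $n$, so this pair is adapted to $f_{n}$. Writing the local representation $\psi\circ f_{n}\circ\varphi^{-1}(y,z)=(f_{n,1}(y,z),f_{n,2}(z))$, the local representation of $j_{L}^{k}f_{n}$ in the charts $(U,\varphi)$ and $(J_{L}^{k}(U,V),\theta_{U,V})$ is the canonical one, $(y,z)\mapsto(y,z,f_{n,1}(y,z),f_{n,2}(z),\{\partial_{y}^{\alpha}f_{n,1}(y,z)\}_{1\le|\alpha|\le k})$. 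Because the model space of $J_{L}^{k}(U,V)$ is a finite product (over $\R^{p_{1}}$, the Euclidean jet fibres $\mathcal{L}_{\mathrm{sym}}^{\ell}$, and the two transversals), convergence of $j_{L}^{k}f_{n}$ uniformly on compacta of $\varphi(U)$ is equivalent to: $f_{n,2}\to g'$ and $\partial_{y}^{\alpha}f_{n,1}\to g_{\alpha}$ uniformly on compacta for all $0\le|\alpha|\le k$, with all limits continuous and $g'$ depending only on $z$. Applying, for each fixed $z$, the elementary fact that uniform convergence of $C^{k}$ functions of $y$ together with all $y$-derivatives up to order $k$ forces the limit $g_{0}(\cdot,z)$ to be $C^{k}$ with $\partial_{y}^{\alpha}g_{0}(\cdot,z)=g_{\alpha}(\cdot,z)$, and using joint continuity of the $g_{\alpha}$, we conclude $g_{0}\in C_{L}^{k}$. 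Hence the local representation of $f_{0}$ at $m_{0}$ is $(g_{0},g')$, which is $C_{L}^{k}$, and the local representation of $F$ there is exactly $j_{L}^{k}(g_{0},g')$. As $m_{0}$ was arbitrary, $f_{0}\in C_{L}^{k}(\mathcal{M},\mathcal{N})$ and $F=j_{L}^{k}f_{0}$, so the image is closed.

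The main (and essentially the only) obstacle is the penultimate step: verifying that uniform convergence of the jets yields a leafwise $C^{k}$ limit — that the $y$-derivatives of the limit exist, agree with the limiting jet components, and depend continuously on $(y,z)$ jointly — without assuming any differentiability in the transversal direction. This reduces, for each fixed $z$, to the classical one-parameter-family statement, with joint continuity then supplied for free by the continuity of the limiting jet $F$. The remaining issues — adapting the charts to all large $n$, and matching the product metric on the jet space with componentwise convergence — are routine bookkeeping.
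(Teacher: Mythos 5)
Your proof is correct and is essentially the argument the paper itself invokes: the paper omits the proof and refers to the classical manifold case (Hirsch, Theorem 4.3), and your write-up is exactly that argument adapted to the foliated setting, with injectivity via the target projection and closedness via local jet charts plus the standard fact that uniform convergence of $y$-derivatives up to order $k$ (for each fixed transversal parameter $z$, with joint continuity inherited from the limiting jet) yields a leafwise $C^{k}$ limit. No gaps beyond the routine chart bookkeeping you already flag.
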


By Proposition \ref{pr.embedding}, if we give
$C_{L}^{k}(\mathcal{M}, \mathcal{N})$ the topology
induced by the map $j_{L}^{k}$, then we see that
$C_{L}^{k}(\mathcal{M}, \mathcal{N})$ is a Polish space.
The following proposition is useful in our arguments.

\begin{proposition}\label{pr.loc convergence}
  Suppose that $\mathcal{M}$, $\mathcal{N}$ are
  foliated spaces
  and suppose further that $\mathcal{M}$ is compact.
  Let $\{f_{n}\}$ be
  a sequence in $C_{L}^{k}(\mathcal{M}, \mathcal{N})$.
  Then, for a map $f\in C_{L}^{k}(\mathcal{M}, \mathcal{N})$, the
  following conditions are equivalent.
  \begin{itemize}
    \item[(i)] $f_{n}$ converges to $f$ as $n\to \infty$ in $C_{L}^{k}(\mathcal{M}, \mathcal{N})$.

    \item[(ii)] For any $m\in \mathcal{M}$,
          there exist a positive integer $N$ and a
          pair of foliated charts $(U, \varphi)$, $(V, \psi)$ adapted to $f$ such that $m\in U$ and for $n\ge N$,
          \begin{itemize}
            \item the pair of foliated charts $(U, \varphi)$, $(V, \psi)$ is adapted to $f_{n}$ and
            \item local representations $f=(f_{1}, f_{2})$ and $f_{n}=(f_{n,1}, f_{n,2})$ satisfy that
                  \begin{equation}\label{eq.loc convergence}
                    \partial^{\alpha}_{y}f_{n,1}\to \partial^{\alpha}_{y} f_{1} \text{ and }
                    f_{n, 2}\to f_{2}
                  \end{equation}
                  uniformly as $n\to \infty$ on
                  a neighborhood of $m$ in $U$
                  for every multi-index $\alpha$ with $|\alpha|\le k$.
          \end{itemize}
  \end{itemize}
\end{proposition}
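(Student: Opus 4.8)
The plan is to strip the two topologies down to their concrete meanings and then bridge the local statement (ii) with the global one (i) using compactness of $\mathcal{M}$. Recall that, by Proposition~\ref{pr.embedding}, the topology of $C_{L}^{k}(\mathcal{M},\mathcal{N})$ is the one making $j_{L}^{k}$ a homeomorphism onto its closed image in $C_{L}(\mathcal{M}, J_{L}^{k}(\mathcal{M},\mathcal{N}))$, and that, since $\mathcal{M}$ is compact, the compact-open topology on the target is the topology of uniform convergence with respect to any fixed metric $\rho$ inducing the topology of $J_{L}^{k}(\mathcal{M},\mathcal{N})$. Hence (i) is equivalent to $\sup_{m\in\mathcal{M}}\rho\bigl(j_{L}^{k}f_{n}(m), j_{L}^{k}f(m)\bigr)\to 0$, and the whole proof reduces to identifying this with (ii). The basic dictionary is the coordinate description already recorded in the excerpt: in the foliated chart $\theta_{U,V}$ of $J_{L}^{k}(U,V)$, the jet $j_{L}^{k}g(m')$ of a $C_{L}^{k}$-map $g=(g_{1},g_{2})$ has coordinates $\bigl(\varphi(m')\text{-part},\, g_{1}(m'),\, g_{2}(m'),\, \{\partial_{y}^{\alpha}g_{1}(m')\}_{1\le|\alpha|\le k}\bigr)$, the first part being independent of $g$.

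For (i)$\Rightarrow$(ii), fix $m\in\mathcal{M}$. Since the natural projection $J_{L}^{k}(\mathcal{M},\mathcal{N})\to\mathcal{N}$ is continuous, (i) gives $f_{n}\to f$ uniformly on $\mathcal{M}$. Choose foliated charts $(U,\varphi)$ around $m$ and $(V,\psi)$ around $f(m)$; shrinking $U$ to have relatively compact closure and $f(\overline{U})$ a compact subset of the open set $V$, the pair is adapted to $f$, and because $f(\overline{U})$ has positive $\rho$-distance to the complement of $V$, uniform convergence $f_{n}\to f$ forces $f_{n}(\overline{U})\subset V$ for all $n\ge N$, so the pair is adapted to $f_{n}$ as well. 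Reading the uniform convergence $j_{L}^{k}f_{n}\to j_{L}^{k}f$ on the compact set $\overline{U}$ through the chart $\theta_{U,V}$ (a homeomorphism, hence uniformly continuous on compacta) yields exactly \eqref{eq.loc convergence} uniformly on $\overline{U}$, which is a neighborhood of $m$ in $U$.

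For (ii)$\Rightarrow$(i), cover $\mathcal{M}$ by the neighborhoods $W_{m}$ furnished by (ii), shrink each to a relatively compact $W_{m}'$ with $\overline{W_{m}'}\subset W_{m}$, and extract a finite subcover $\overline{W_{m_{1}}'},\dots,\overline{W_{m_{r}}'}$. Fix $j$ and write $\theta=\theta_{U_{m_{j}},V_{m_{j}}}$. The set $K_{j}=\{j_{L}^{k}f(m'):m'\in\overline{W_{m_{j}}'}\}$ is compact and contained in the open set $J_{L}^{k}(U_{m_{j}},V_{m_{j}})$, so it has a compact neighborhood $K_{j}'$ inside that open set; $\theta^{-1}$ is uniformly continuous on $\theta(K_{j}')$. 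By \eqref{eq.loc convergence} (the $\varphi$-part of the coordinates being the same for all $n$) the coordinates of $j_{L}^{k}f_{n}(m')$ converge to those of $j_{L}^{k}f(m')$ uniformly in $m'\in\overline{W_{m_{j}}'}$; in particular these points eventually lie in $K_{j}'$, and applying $\theta^{-1}$ gives $\sup_{m'\in\overline{W_{m_{j}}'}}\rho\bigl(j_{L}^{k}f_{n}(m'), j_{L}^{k}f(m')\bigr)\to 0$. Taking the maximum over $j=1,\dots,r$ gives uniform convergence on all of $\mathcal{M}$, i.e. (i).

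The chart bookkeeping and the point-set topology (shrinking covers, uniform continuity on compacta, transferring moduli of continuity) are routine. The main obstacle is precisely the careful passage between the intrinsic metric $\rho$ on $J_{L}^{k}(\mathcal{M},\mathcal{N})$ and the coordinate description in the charts $\theta_{U,V}$: one must verify that uniform convergence of finitely many local representations, together with all their leafwise derivatives up to order $k$, really reassembles into uniform convergence in $\rho$ across overlapping charts, and that in the direction (i)$\Rightarrow$(ii) the adapted charts for $f$ can be chosen so as to be simultaneously adapted to $f_{n}$ for all large $n$ — this is where compactness of $\mathcal{M}$ and the uniform convergence $f_{n}\to f$ are used essentially.
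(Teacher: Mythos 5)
Your argument is correct and follows exactly the route the paper intends (the paper omits this proof, noting it is analogous to Proposition \ref{pr.embedding}): pull the topology of $C_{L}^{k}(\mathcal{M},\mathcal{N})$ back through $j_{L}^{k}$, use that the compact-open topology on $C(\mathcal{M},J_{L}^{k}(\mathcal{M},\mathcal{N}))$ is uniform convergence for any metric since $\mathcal{M}$ is compact, and translate between the intrinsic metric and the jet charts $\theta_{U,V}$ via uniform continuity on compact neighborhoods, with compactness of $\mathcal{M}$ supplying the finite subcover in (ii)$\Rightarrow$(i). The only blemish is a wording slip in (i)$\Rightarrow$(ii): $\overline{U}$ need not lie in $U$, so one should take a relatively compact open $U'$ with $m\in U'$ and $\overline{U'}\subset U$ and assert \eqref{eq.loc convergence} uniformly on $U'$ (or $\overline{U'}$), which is all that condition (ii) requires.
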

We omit the proof since it is shown in a similar way
as the previous proposition.

In what follows, we assume that $\mathcal{M}$ is compact.
We set metrics on $\textrm{Homeo}_{L}(\mathcal{M})$ and $\textrm{Diff}_{L}^{k}(\mathcal{M})$, $k \ge 1$, by
\[
  \begin{aligned}
    \textrm{dist}(f, g) & =\textrm{d}_{C(\mathcal{M}, \mathcal{M})}(f, g)
    +\textrm{d}_{C(\mathcal{M}, \mathcal{M})}(f^{-1}, g^{-1}),                                                      \\
    \text{ and }\
    \textrm{dist}(f, g)
                        & =\textrm{d}_{C(\mathcal{M}, J_{L}^{k}(\mathcal{M}, \mathcal{M}))}(j_{L}^{k}f, j_{L}^{k}g)
    +\textrm{d}_{C(\mathcal{M}, J_{L}^{k}(\mathcal{M}, \mathcal{M}))}(j_{L}^{k}(f^{-1}), j_{L}^{k}(g^{-1})),
  \end{aligned}
\]
respectively. Therefore $f_{n}\to f$ as $n\to \infty$ in $\textrm{Homeo}_{L}(\mathcal{M})$ (resp. $\textrm{Diff}_{L}^{k}(\mathcal{M})$)
if and only if
$f_{n}\to f$ and $f_{n}^{-1}\to f^{-1}$ as $n\to \infty$ in $C_{L}(\mathcal{M}, \mathcal{M})$ (resp. $C_{L}^{k}(\mathcal{M}, \mathcal{M})$).

\section{Preliminaries from rough path theory}\label{sec.rough paths}

Now we recall basics of rough path theory very briefly.
Assume $\alpha \in (1/3, 1/2)$ throughout this paper
unless otherwise stated.
Set $\triangle_{[a,b]} =\{ (s,t) \colon a \le s \le t \le b\}$
for $0\le a<b$
and $\triangle_{T} = \triangle_{[0,T]}$ for $T>0$.

First, we introduce $\alpha$-H\"older geometric rough paths.
A continuous map
${\bf w} =(1, {\bf w}^1, {\bf w}^2)$ from $\triangle_{T}$
to the truncated tensor algebra $T^{(2)} (\R^d)= \R \oplus \R^d \oplus
  (\R^d \otimes \R^d)$
is called an $\alpha$-H\"older  rough path
if the following \eqref{def.chen}--\eqref{def.hoel} are satisfied:
\begin{align}\label{def.chen}
  {\bf w}^1_{s,t} = {\bf w}^1_{s,u}+ {\bf w}^1_{u,t},
  \quad
  {\bf w}^2_{s,t} = {\bf w}^2_{s,u}+ {\bf w}^2_{u,t}
  +{\bf w}^1_{s,u}\otimes {\bf w}^1_{u,t}
  \quad
  (0\le s \le u \le t \le T),
  \\
  \|{\bf w}^i \|_{i\alpha}
  :=\sup_{0 \le s<t \le T} \frac{|{\bf w}^i_{s,t} |}{(t-s)^{i \alpha}}
  < \infty \qquad (i=1,2).
  \label{def.hoel}
\end{align}
The set of $\alpha$-H\"older  rough paths
is denoted by $\Omega_{\alpha}([0, T], \R^{d})$,
which is equipped with a natural distance
${\rm d}_{\alpha}({\bf w},  \hat{\bf w})
  := \max_{i=1,2}\|{\bf w}^i -  \hat{\bf w}^i\|_{i\alpha}$.
We will usually write ${\bf w} =({\bf w}^1, {\bf w}^2)$ by omitting
the trivial component ``$1$".

Let $C^{1-\textrm{Hld}}_{0}([0, T], \R^{d})$ be the totality of Lipschitz continuous paths $w=(w_{t})_{0\le t\le T}$ in $\R^{d}$ with $w_{0}=0$. For  $w\in C^{1-\textrm{Hld}}_{0}([0, T], \R^{d})$, we set ${\bf w}^1_{s,t} =w_t -w_s$
and ${\bf w}^2_{s,t} = \int_s^t (w_u -w_s) \otimes dw_u$
by using Riemann-Stieltjes integral.
Then, it is easy to see that ${\bf w} \in \Omega_{\alpha}([0, T], \R^{d})$.
We denote the natural lift map by
${\bf L}\colon C^{1-{\rm Hld}}_0 ([0,T], \R^d)
  \to
  \Omega_{\alpha}([0, T], \R^{d})$, i.e., ${\bf w}={\bf L}(w)$.
The
$\alpha$-H\"older geometric rough path space
$G\Omega_{\alpha}([0, T], \R^{d})$
is defined to be the ${\rm d}_{\alpha}$-closure of
${\bf L} ( C^{1-{\rm Hld}}_0 ([0,T], \R^d))$ in
$\Omega_{\alpha}([0, T], \R^{d})$.
By the way it is defined,
$G\Omega_{\alpha}([0, T], \R^{d})$ is a complete and
separable metric space.
It is well-known that
every
${\bf w}\in G\Omega_{\alpha}([0, T], \R^{d})$ satisfies the following
shuffle relation:
\begin{equation}\label{eq.shfl}
  {\bf w}^{1,j}_{s,t}{\bf w}^{1,k}_{s,t}
  =
  {\bf w}^{2,jk}_{s,t} + {\bf w}^{2,kj}_{s,t}
  \qquad
  ((s,t)\in \triangle_T,
  1\le j,k \le d).
\end{equation}

We consider the following rough differential equation
(with drift)
driven by an $\alpha$-H\"older geometric rough path
${\bf w}=({\bf w}^{1}, {\bf w}^{2})\in G\Omega_{\alpha}([0, T], \R^{d})$:
\begin{equation}\label{RDE1}
  dx_{t}=\sum_{i=1}^d V_i (x_{t})dw^i_{t} + V_0 (x_{t}) dt.
\end{equation}
Here, $V_i~(0 \le i \le d)$ is a vector field on $\R^p$.
When it is regarded as a $\R^{p}$-valued function,
it is written as $\tilde V_i$ or $V_i {\rm Id}$.
As usual,  we set $\|V_i \|_{C_b^k} :=
  \|\tilde V_i \|_{C_b^k}
  = \sum_{l=0}^k \| \nabla^l \tilde V_i\|_\infty$,
where $\| \,\cdot\, \|_\infty$ stands for the sup-norm
($0 \le k <\infty$).
If the vector fields
are $C^3$, then for any given initial condition $\xi\in \R^p$
there exists a unique solution up to an explosion time.
If ${\bf w} =L(w)$ for some $w \in C^{1-{\rm Hld}}_0 ([0,T], \R^d)$,
then the unique solution $(x_t)$ coincides with the one
in the Riemann-Stieltjes sense.
When there is a unique global solution $(x_t)$ with $x_0 =\xi$
for
given ${\bf w}$, $\xi$,
${\bf V}:=[\tilde{V}_1, \ldots, \tilde{V}_d; \tilde{V}_0]$,
we write $x_{t} = \Psi (\xi, {\bf w}, {\bf V})_t$.
If the vector fields are $C^3_b$, then no explosion occurs
(i.e., a unique global solution exists).

In this paper we only consider the first level path of
a solution to an RDE and simply call it a solution.
(The first level path is the component that plays
the role of a path in the usual sense.)
Therefore, a solution $(x_t)$ to RDE \eqref{RDE1}
is an $\alpha$-H\"older path that starts at a certain
initial point.

\begin{remark}\label{rm.ryuha}
  There are in fact several formulations of an RDE.
  In any of them,
  the first level path of a solution coincide.
  The three major formulations are as follows:
  \begin{itemize}
    \item
          Lyons' original formulation:
          A solution of an RDE is also a rough path.
          (See \cite{LQ, LCL} for example.)
    \item
          Gubinelli's controlled path theory:
          A solution of an RDE driven by a rough path is
          not a rough path, but a controlled path with respect to
          the given rough path.
          (See \cite{FH} for example.)
    \item
          Davie's formulation:
          A solution of an RDE is a usual path
          that satisfies a Euler-type short time approximation.
          (See \cite{FV} for example.)
          This formulation has variants e.g. \cite{Bai},
          which will be used in the present paper.
  \end{itemize}
\end{remark}

We now recall Lyons' continuity theorem,
which states that Lyons-It\^o map is (locally Lipschitz)
continuous in all of its arguments.
This is the most important theorem in rough path theory.
In this paper it is quite important that
the solution depends continuously
on the coefficient of the RDE.
\begin{proposition}\label{pr.cont_LI}
  We have the following results for RDE  \eqref{RDE1}.
  \begin{enumerate}
    \item
          For every $\xi \in \R^p$,
          ${\bf w} \in G\Omega_{\alpha}([0, T], \R^{d})$,
          and ${\bf V} \in C^3_b (\R^{p}, \R^{p} \otimes \R^{d+1})$,
          then there exists a unique solution $x=(x_t)_{0\le t \le T}$ to RDE  \eqref{RDE1} with $x_0 =\xi$.
    \item
          Let $C^{\alpha-{\rm Hld}} ([0,T], \R^p)$ be the space of $\alpha$-H\"older continous paths on $[0, T]$ with values in $\R^{p}$. Then, the Lyons-It\^o map
          $(\xi, {\bf w},  {\bf V})
            \mapsto x = \Psi (\xi, {\bf w}, {\bf V})$
          is a locally Lipschitz continuous map from
          $\R^d \times G\Omega_{\alpha}([0, T], \R^{d})
            \times C^3_b (\R^{p}, \R^{p} \otimes \R^{d+1})$ to
          $C^{\alpha-{\rm Hld}} ([0,T], \R^p)$.
          More precisely, we have the following estimate:
          If
          \[
            \max_{i=1,2} \|{\bf w}^i \|_{i\alpha}^{1/i} \le K, \quad
            \max_{i=1,2} \|\hat {\bf w}^i \|_{i\alpha}^{1/i} \le K,\quad
            \| {\bf V}\|_{C^3_b} \le K, \quad
            \| \hat{\bf V}\|_{C^3_b} \le K
          \]
          for $K>0$, then there exists a positive constant
          $C_K$ which depends only on $K$ such that
          \[
            \|  \Psi (\xi, {\bf w}, {\bf V})
            -  \Psi (\hat\xi, \hat{\bf w}, \hat{\bf V})
            \|_{\alpha} \le C_K (|\xi -\hat\xi| +
            {\rm d}_{\alpha}({\bf w},  \hat{\bf w})  +\|{\bf V} -\hat{\bf V}\|_{C^3_b}).
          \]
          Here, the norm on the left hand side stands for the
          $\alpha$-H\"older seminorm.
  \end{enumerate}
\end{proposition}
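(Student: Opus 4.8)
The plan is to derive this from the classical universal limit theorem for \emph{driftless} rough differential equations (as in \cite{LQ, LCL, FV, FH}, or via the Davie-type scheme of \cite{Bai}) by the standard trick of absorbing the drift into the driving signal. Concretely, let $\tau_t = t$, which lies in $C^{1-\mathrm{Hld}}_0([0,T],\R)$, and for $\mathbf w \in G\Omega_\alpha([0,T],\R^d)$ form the joint lift $\bar{\mathbf w} = (\mathbf w,\tau) \in G\Omega_\alpha([0,T],\R^{d+1})$: its missing level-two entries are the Young integrals $\int_s^t (w^i_u - w^i_s)\,du$, $\int_s^t (u-s)\,dw^i_u$ and $(t-s)^2/2$, all well defined since $\alpha + 1 > 1$, so $\bar{\mathbf w}$ is again a geometric rough path (being a limit of lifts of Lipschitz paths). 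The assignment $\mathbf w \mapsto \bar{\mathbf w}$ is locally Lipschitz, and in fact Lipschitz on each ball $\{\max_{i=1,2}\|\mathbf w^i\|_{i\alpha}^{1/i}\le K\}$ with constant depending only on $K$: the time component is common to $\bar{\mathbf w}$ and $\hat{\bar{\mathbf w}}$, so the only new contributions to $\mathrm d_\alpha(\bar{\mathbf w},\hat{\bar{\mathbf w}})$ are Young-type differences estimated by $\mathrm d_\alpha(\mathbf w,\hat{\mathbf w})$. Under this reduction \eqref{RDE1} becomes the driftless RDE $dx_t = \sum_{i=1}^{d+1}\bar V_i(x_t)\,d\bar w^i_t$ with $\bar V_i = \tilde V_i$ for $i\le d$ and $\bar V_{d+1} = \tilde V_0$, and $\|\bar{\mathbf V}\|_{C_b^3}$ is comparable to $\|\mathbf V\|_{C_b^3}$; by the consistency already recorded above (for $\mathbf w=\mathbf L(w)$ with $w$ Lipschitz the RDE solution agrees with the Riemann–Stieltjes one), the first level path of its solution is precisely $\Psi(\xi,\mathbf w,\mathbf V)$.

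For the driftless equation I would invoke, or reprove along the lines of \cite{FV, FH, Bai}, the short-time statement: there is $h = h(K) > 0$ so that whenever $\max_{i}\|\bar{\mathbf w}^i\|_{i\alpha}^{1/i}\le K$ and $\|\bar{\mathbf V}\|_{C_b^3}\le K$, the equation started at an arbitrary point has a unique solution on any interval of length at most $h$, with $\alpha$-Hölder norm bounded in terms of $K$ only, together with a stability estimate on such an interval controlling the difference of two solutions by $C_K$ times the sum of the distance between the initial points, the $\mathrm d_\alpha$-distance of the drivers and the $C_b^3$-distance of the vector fields. (The $C_b^3$ hypothesis is exactly what the level-two theory, $\alpha\in(1/3,1/2)$, requires.) The key structural point making these constants depend on $K$ alone is that $\max_{i=1,2}\|\mathbf w^i\|_{i\alpha}^{1/i}$ is homogeneous under the dilation $(\mathbf w^1,\mathbf w^2)\mapsto(\lambda\mathbf w^1,\lambda^2\mathbf w^2)$, so the same $K$ controls the relevant scale on every subinterval.

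Since $\bar{\mathbf V}\in C_b^3$ there is no explosion, so I would split $[0,T]$ into $N = \lceil T/h(K)\rceil$ subintervals, solve and concatenate to obtain the global solution, which gives part (1). For part (2) I would iterate the per-subinterval stability estimate along the $N$ pieces — the endpoint on one piece being the initial point on the next, the errors propagate through an $N$-step discrete Gronwall inequality with $N = N(K,T)$ — and then bound the $\alpha$-Hölder seminorm of the concatenated path by those of the pieces (finitely many $\alpha$-Hölder paths glued along a fixed partition). Passing from $\bar{\mathbf w},\bar{\mathbf V}$ back to $\mathbf w,\mathbf V$ via the first paragraph then yields the asserted inequality $\|\Psi(\xi,\mathbf w,\mathbf V) - \Psi(\hat\xi,\hat{\mathbf w},\hat{\mathbf V})\|_\alpha \le C_K(|\xi-\hat\xi| + \mathrm d_\alpha(\mathbf w,\hat{\mathbf w}) + \|\mathbf V-\hat{\mathbf V}\|_{C_b^3})$.

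The genuinely hard part is the analytic core of the second paragraph — the short-time existence, uniqueness and stability, i.e. the universal limit theorem itself — whose proof rests on the sewing lemma / maximal inequalities applied to the remainder terms of the Euler approximation and is standard but lengthy; in this paper I would simply cite it. The work specific to the present statement is comparatively light: verifying that the drift-absorption map is jointly locally Lipschitz in $(\mathbf w,\mathbf V)$, and organising the globalisation so that every constant is a function of $K$ (and $T$) alone, both of which are routine once the homogeneity of the rough-path norm is exploited.
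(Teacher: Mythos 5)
Your argument is correct, but it is organised differently from the paper. The paper disposes of this proposition in one line, quoting \cite[Theorem 10.26]{FV}, which is stated directly for RDEs \emph{with drift} and already contains existence, uniqueness and the local Lipschitz estimate for the It\^o--Lyons map in $(\xi,{\bf w},{\bf V})$, with constants depending only on the bound $K$ (and $T$, $\alpha$). You instead absorb the drift into the driver: augment ${\bf w}$ by the time path $\tau_t=t$, build the joint lift $\bar{\bf w}\in G\Omega_\alpha([0,T],\R^{d+1})$ via Young integration, check that ${\bf w}\mapsto\bar{\bf w}$ is Lipschitz on $d_\alpha$-balls, and then quote (or re-derive) the driftless universal limit theorem together with a short-time stability estimate, globalised by subdividing $[0,T]$ into $N(K,T)$ pieces and propagating errors by a discrete Gronwall argument. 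Both routes rest on citing the same analytic core, so neither is more elementary at bottom; what the paper's citation buys is brevity and the avoidance of the joint-lift construction altogether, while your reduction buys a more transparent, quasi-self-contained globalisation and illustrates the standard drift-absorption mechanism. Two small points to keep in mind if you write this out: your route forces the drift $V_0$ to enter as a driving vector field, hence to be $C_b^3$ like the others --- harmless here because the proposition assumes ${\bf V}\in C^3_b$ componentwise, but strictly stronger than what drift-separate formulations such as \cite[Theorem 10.26]{FV} require of the drift; and the constants in your time-augmentation step (e.g.\ bounding $|(t-s)(w^i_t-w^i_s)|$ by a multiple of $(t-s)^{2\alpha}$) depend on $T$ as well as $K$, which should be said explicitly since the statement fixes $T$ once and for all.
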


\begin{proof}
  This is a special case of \cite[Theorem 10.26]{FV}.
\end{proof}

\begin{remark}
  If we consider RDE \eqref{RDE1} on a subinterval
  $[a,b] \subset [0,T]$, then a unique global
  solution $x$ with $x_a =\xi$
  is denoted by
  $x = \Psi_{[a,b]} (\xi, {\bf w}, {\bf V})$ (if it exists).
  Of course, Proposition \ref{pr.cont_LI} above applies to $ \Psi_{[a,b]}$
  with trivial modifications
  and  the local Lipschitz constant $C_K$ can be chosen
  independent of $[a,b]$.
\end{remark}

\begin{proposition}\label{pr.18r^d}
  Let ${\bf w}\in
    G\Omega_{\alpha}([0, T], \R^{d})$ and
  $[a,b]$ be a subinterval of $[0,T]$.
  Assume that $V_i~(0 \le i \le d)$ is $C^3$.
  Then, for $x\in C([a, b], \R^{p})$,
  the following conditions are equivalent.
  \begin{itemize}
    \item[(A)] $x$ is
          the first level path of a unique solution to
          RDE \eqref{RDE1} on $[a,b]$.
    \item[(B)] $x$ satisfies
          \begin{align}
            f(x_{t})-f(x_{s})
             & =\sum_{i=1}^{d}V_{i}f(x_{s}){\bf w}^{1, i}_{s, t}
            +\sum_{j,k=1}^{d}V_{j}V_{k}f(x_{s}){\bf w}^{2, jk}_{s, t}
            \nonumber                                            \\
             & \qquad \qquad
            +
            V_{0}f(x_{s}) (t-s)
            +O(|t-s|^{3\alpha})
            \label{eq:190125-1}
          \end{align}
          for every $f\in C^{3}(\R^{p}, \R)$ and $(s,t)\in \triangle_{[a, b]}$.
  \end{itemize}
\end{proposition}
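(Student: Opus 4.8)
The plan is to prove the equivalence of (A) and (B) by reducing it to a known characterization of RDE solutions in the Davie/Bai sense, since the statement of Proposition \ref{pr.18r^d} is precisely a Taylor-expansion-type (``numerical scheme'') description of the solution. First I would recall that, by one of the standard formulations of RDEs (Davie's formulation, or its variant in \cite{Bai}, as mentioned in Remark \ref{rm.ryuha}), the first level path $x$ of a solution to \eqref{RDE1} is characterized by a short-time Euler estimate: there exists $\theta>1$ such that
\[
  x_t - x_s = \sum_{i=1}^d \tilde V_i(x_s){\bf w}^{1,i}_{s,t}
  + \sum_{j,k=1}^d (\nabla\tilde V_k\, \tilde V_j)(x_s){\bf w}^{2,jk}_{s,t}
  + \tilde V_0(x_s)(t-s) + O(|t-s|^{3\alpha}).
\]
This is the content of (B) specialized to the coordinate functions $f(x)=x^1,\dots,x^p$ (note $V_jV_k f$ for $f$ the $\ell$-th coordinate is exactly the $\ell$-th component of $\nabla\tilde V_k\,\tilde V_j$, by the definition of the vector fields as first-order operators). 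So the implication (A) $\Rightarrow$ (B) for general $f\in C^3$ would follow from the case of coordinate functions by a Taylor expansion of $f$ around $x_s$: write $f(x_t)-f(x_s)=\nabla f(x_s)\cdot(x_t-x_s)+\frac12\nabla^2 f(x_s)[x_t-x_s,x_t-x_s]+O(|x_t-x_s|^3)$, substitute the Euler expansion of $x_t-x_s$, use that $x$ is $\alpha$-H\"older so $|x_t-x_s|^3=O(|t-s|^{3\alpha})$, use the shuffle relation \eqref{eq.shfl} to rewrite ${\bf w}^{1,j}_{s,t}{\bf w}^{1,k}_{s,t}={\bf w}^{2,jk}_{s,t}+{\bf w}^{2,kj}_{s,t}$, and collect terms. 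The cross terms involving $(t-s)$ times ${\bf w}^{1,i}_{s,t}$ or $({\bf w}^{1}_{s,t})^{\otimes 2}$ are all $O(|t-s|^{3\alpha})$ since $\alpha<1/2$ means $(t-s)^{1+\alpha}$ and $(t-s)^{2\alpha}$... wait, one must be careful: $(t-s)\cdot(t-s)^\alpha=(t-s)^{1+\alpha}$ and $1+\alpha>3\alpha$ iff $\alpha<1/2$, which holds, so that term is fine; and the quadratic term from $x_t-x_s$ is $O((t-s)^{2\alpha})$ which is \emph{not} $O((t-s)^{3\alpha})$, so it genuinely contributes, and this is exactly where the shuffle relation and the $\nabla^2 f$ term combine to give the $V_jV_k f$ coefficient. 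This bookkeeping is routine once organized.

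For the converse (B) $\Rightarrow$ (A), I would apply (B) with $f$ ranging over the coordinate functions to deduce that $x$ satisfies the defining Euler estimate of a solution in the Davie/Bai sense, hence is the first level path of an actual solution; uniqueness then comes from Proposition \ref{pr.cont_LI}(1) under the $C^3$ hypothesis (with the usual localization to handle explosion, or simply restating on $[a,b]$ as in the Remark). One subtlety: the cited continuity theorem is stated for $C^3_b$ vector fields, while here $V_i$ are merely $C^3$; on the fixed bounded interval $[a,b]$ a given continuous $x$ has compact image, so one may replace $V_i$ by compactly-supported modifications agreeing with $V_i$ on a neighborhood of $x([a,b])$ without changing either the validity of (B) or the solution property, which lets one quote the $C^3_b$ results. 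I would make this localization explicit at the start.

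The main obstacle I expect is not conceptual but organizational: carefully tracking which error terms are genuinely $O(|t-s|^{3\alpha})$ and which are not, and verifying that the ``$O$'' in (B) is uniform in $(s,t)$ — i.e., that the implied constant depends only on $\|x\|_{\alpha}$, $\|{\bf w}\|$, and $\|f\|_{C^3}$ on the relevant compact set — so that the estimate can be iterated/patched. In particular one should check that the remainder in the Taylor expansion of $f$, namely $O(|x_t-x_s|^3)$, is controlled by $\|\nabla^3 f\|$ on a fixed compact neighborhood of $x([a,b])$ times $\|x\|_\alpha^3(t-s)^{3\alpha}$, using $3\alpha>1$ so that $3\alpha$ really is an admissible H\"older exponent for a remainder term in the sense of \cite{Bai}. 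A secondary point worth a sentence is that the additivity/consistency of these local expansions (needed to invoke the Davie-type characterization) is automatic since we are working on a single interval $[a,b]$ and $x$ is assumed continuous; no sewing lemma argument beyond what is packaged in the cited formulations is required. Modulo these checks, the proof is a direct translation between two equivalent descriptions of the same object.
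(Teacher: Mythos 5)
Your proposal is correct in substance, but it takes a partly different route from the paper, and the difference is worth spelling out. For (A) $\Rightarrow$ (B) you do essentially what the paper does: obtain the first-level Euler expansion of $x_t-x_s$ (the paper gets it from the controlled-path estimate, Theorem 4.10 of \cite{FH}, rather than by quoting Davie/Bailleul), then Taylor-expand $f$, kill the $(t-s)\cdot(t-s)^\alpha$ and cubic remainders using $\alpha<1/2$, $3\alpha>1$, and absorb the genuinely contributing quadratic term via the shuffle relation \eqref{eq.shfl} and the symmetry of $\nabla^2 f$ --- identical bookkeeping, and your index convention $V_jV_k\,\mathrm{Id}=\nabla\tilde V_k\cdot\tilde V_j$ matches the paper's. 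The real divergence is in (B) $\Rightarrow$ (A): you reduce to the coordinate-function Euler estimate and then \emph{cite} the equivalence of Davie's/Bailleul's formulation with the standard notion of solution, whereas the paper proves exactly this implication by hand: from the Euler expansion it deduces that $(x,\sigma(x))$ is a controlled path with Gubinelli derivative $\sigma(x)$, invokes the composition lemma (Lemma 7.3 of \cite{FH}) for $\sigma(x)$, and then recovers the integral equation by a Riemann-sum limit (Theorem 4.10 of \cite{FH}, using $3\alpha>1$ to discard the accumulated $O(|t_i-t_{i-1}|^{3\alpha})$ errors). Your citation-based shortcut is legitimate provided you verify that the cited characterization really covers the present setting --- geometric rough paths with a drift term, remainder exponent $3\alpha>1$, and coefficients that are only $C^3$ (your cut-off reduction to $C^3_b$, which the paper also uses via a truncated identity map and Definition \ref{def.RDEdom}, handles the last point) --- and that the formulation of ``solution'' in the citation agrees with the one used in \eqref{RDE1} (the controlled-path sense), which is precisely what Remark \ref{rm.ryuha} asserts but the proposition is meant to make rigorous. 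What the paper's self-contained argument buys is that the intermediate estimates \eqref{eq.1019-2}--\eqref{eq.1019-3} become available for reuse (they are quoted again in the proof of Lemma \ref{lm.prolong}), and that the proposition does not become circular with the remark it is supposed to substantiate; what your route buys is brevity, at the cost of pushing the only nontrivial implication into a reference.
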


\begin{remark}\label{rem.190920}
  The precise meaning of \eqref{eq:190125-1}
  is as follows:
  There exists a positive constant $C$ such that
  \begin{align}
    \Bigl|
    f(x_{t})-f(x_{s})
    -\Bigl\{ \sum_{i=1}^{d}V_{i}f(x_{s}){\bf w}^{1, i}_{s, t}
     & +\sum_{j,k=1}^{d}V_{j}V_{k}f(x_{s}){\bf w}^{2, jk}_{s, t}
    \nonumber                                                    \\
     &
    +V_{0}f(x_{s})(t-s)
    \Bigr\}  \Bigr|
    \le
    C	|t-s|^{3\alpha},
    \qquad (s, t)  \in \triangle_{[a,b]}.
    \nonumber
  \end{align}
  Here, $C$ may depend on $f, [a,b], {\bf w}, x$ and $V_i$'s,
  but not on $(s,t)$.
  It should be noted
  that when we write $O(|t-s|^{3\alpha})$
  we do not assume that $t-s$ is small enough.
  Throughout this paper, we will use Landau's ``$O$-symbol"
  in this way.
\end{remark}

\begin{proof}[Proof of Proposition \ref{pr.18r^d}]
  As usual we set
  $\sigma =[\tilde{V}_1, \ldots,  \tilde{V}_d]$ and $b = \tilde{V}_0$,
  which take values in $\R^p \otimes \R^d$ and $\R^p$, respectively.
  Then, the matrix notation for RDE \eqref{RDE1} reads
  \[
    dx_{t}=\sigma(x_{t})d{\bf w}_{t} + b (x_{t})dt.
  \]
  In this proof
  this is understood in
  the sense of controlled rough path theory.

  First, we show (A) implies (B).
  Let $(x, x^{\dagger})$ be a unique solution of the above RDE,
  that is,
  it is a controlled path with respect to ${\bf w}$ and satisfies
  \begin{equation}\label{eq.1019-1}
    x_t -x_0 = \int_0^t \sigma (x_u) d {\bf w}_u + \int_0^t  b(x_u) du,
    \qquad
    x^{\dagger}_t = \sigma (x_t).
  \end{equation}
  Precisely, the integrand in the integration above
  is
  $(\sigma (x), \sigma (x)^{\dagger})
    = (\sigma (x), \nabla\sigma (x)\cdot x^{\dagger})$.
  By a basic estimate for rough path integrals
  (Theorem 4.10 \cite{FH}), we see that for $a \le s \le t\le b$
  \begin{align}
    x_t -x_s
     & =
    \sigma (x_s) {\bf w}^1_{s,t}+ \sigma (x)_s^{\dagger} {\bf w}^2_{s,t}
    + b (x_s) (t-s)
    + O (|t-s|^{3\alpha})
    \nonumber \\
     & =
    \sigma (x_s) {\bf w}^1_{s,t}
    +  \nabla\sigma (x_s)\cdot \sigma (x_s)  \langle {\bf w}^2_{s,t} \rangle
    + b (x_s) (t-s)
    + O (|t-s|^{3\alpha})
    \nonumber \\
     & =
    \sum_{i} V_i {\rm Id} (x_s) {\bf w}^{1,i}_{s,t}
    +
    \sum_{j,k} V_j V_k {\rm Id} (x_s) {\bf w}^{2,jk}_{s,t}
    + V_0 {\rm Id} (x_s)(t-s)
    + O (|t-s|^{3\alpha}).
    \label{eq.1019-2}
  \end{align}
  Here, we set
  $ \nabla\sigma (x)\cdot \sigma (x)  \langle \xi\otimes \eta \rangle
    =  \nabla\sigma (x)  \langle  \sigma (x) \xi, \eta \rangle$
  for $\xi, \eta \in {\mathbb R}^d$
  and
  ${\rm Id}$ stands for the identity map of ${\mathbb R}^p$.

  Plugging this into the Taylor expansion of $f \in C^3$,
  we obtain
  \begin{align}
    f(x_t) - f(x_s)
     & =
    \nabla f(x_s) \langle x_t-x_s \rangle
    +\frac12  \nabla^2 f(x_s) \langle x_t-x_s,  x_t-x_s\rangle
    + O (|x_t-x_s |^{3})
    \nonumber  \\
     & =
    \nabla f(x_s)
    \bigl\langle \sum_{i} V_i {\rm Id} (x_s) {\bf w}^{1,i}_{s,t}
    +
    \sum_{j,k} V_j V_k {\rm Id} (x_s) {\bf w}^{2,jk}_{s,t}
    + V_0 {\rm Id} (x_s)(t-s)
    \bigr\rangle
    \nonumber  \\
     & \qquad+
    \frac12 \nabla^2 f(x_s)
    \bigl\langle
    \sum_{i} V_i {\rm Id} (x_s) {\bf w}^{1,i}_{s,t},
    \sum_{i'} V_{i'} {\rm Id} (x_s) {\bf w}^{1,i'}_{s,t}
    \bigr\rangle
    + O (|t-s|^{3\alpha})
    \nonumber  \\
     & =
    \sum_{i}V_{i}f(x_{s}){\bf w}^{1, i}_{s, t}
    +\sum_{j, k}V_{j}V_{k}f(x_{s}){\bf w}^{2, jk}_{s, t}
    + V_0 f (x_s)(t-s)
    +O(|t-s|^{3\alpha}).
    \label{eq.1019-3}
  \end{align}
  Here, we used the shuffle relation  \eqref{eq.shfl}
  and  the symmetry of $\nabla^2 f$.
  Thus, we have obtained (B) from (A).

  Next, we show (B) implies (A).
  Since the image of $t \mapsto x_t$ is compact,
  there exists an ${\mathbb R}^p$-valued $C^3_b$-function $f$
  which coincides with ${\rm Id}$ on a ball that contains the image.
  Applying (B) to each component of $f$, we get
  \begin{align}
    x_t -x_s
     & =
    \sum_{i} V_i {\rm Id} (x_s) {\bf w}^{1,i}_{s,t}
    +
    \sum_{j,k} V_j V_k {\rm Id} (x_s) {\bf w}^{2,jk}_{s,t}
    + V_0 {\rm Id} (x_s)(t-s)
    + O (|t-s|^{3\alpha})
    \nonumber \\
     & =
    \sigma (x_s) {\bf w}^1_{s,t}
    +  \nabla\sigma (x_s)\cdot \sigma (x_s)  \langle {\bf w}^2_{s,t} \rangle
    + b (x_s)(t-s)
    + O (|t-s|^{3\alpha}).
    \nonumber
  \end{align}
  This implies that, if we set $x^{\dagger} = \sigma (x)$,
  then $(x, x^{\dagger})$ is a controlled path with respect to ${\bf w}$.
  By a composition formula for controlled path
  (Lemma 7.3 \cite{FH}),
  so is
  $(\sigma (x), \sigma (x)^{\dagger})$
  if we set  $\sigma (x)^{\dagger} = \nabla\sigma (x)\cdot x^{\dagger}
    =  \nabla\sigma (x)\cdot \sigma (x)$.
  Hence,  for $a \le s \le t\le b$, we have
  \begin{align}
    x_t -x_s
     & =
    \sigma (x_s) {\bf w}^1_{s,t}
    + \sigma (x)^{\dagger}_s  {\bf w}^2_{s,t}
    + b (x_s)(t-s)
    + O (|t-s|^{3\alpha}).
    \nonumber
  \end{align}

  Fix $s<t$ and let ${\mathcal P}= \{ s=t_0 < t_1 <\cdots < t_N =t \}$
  be a partition of $[s,t] \subset [a,b]$.
  It is obvious that $\sum_{i=1}^N O (|t_i- t_{i-1}|^{3\alpha})
    \to 0$ as the mesh $|{\mathcal P}|$ tends to zero.
  By the Riemann sum approximation of rough
  path integral (Theorem 4.10 \cite{FH}), we see that
  \begin{align*}
    x_t -x_s
     & =
    \sum_{i=1}^N (x_{t_i} -x_{t_{i-1}} )
    \nonumber \\
     & =
    \sum_{i=1}^N
    \Bigl(
    \sigma (x_{t_{i-1}}) {\bf w}^1_{t_{i-1}, t_{i} }
    + \sigma (x)^{\dagger}_{t_{i-1}}  {\bf w}^2_{t_{i-1}, t_{i} }
    + b (x_{t_{i-1}}) (t_{i}- t_{i-1})
    \Bigr)
    \nonumber \\
     &
    \qquad\qquad
    +
    \sum_{i=1}^N O (|t_i- t_{i-1}|^{3\alpha})
    \nonumber \\
     & \to
    \int_s^t  \sigma (x_u) d{\bf w}_u + \int_s^t  b(x_u) du
  \end{align*}
  as  $|{\mathcal P}|$ tends to zero.
  Note that we have used $3\alpha >1$.
  Thus, we have shown (A).
\end{proof}

Due to this proposition,
Condition (B) can alternatively be adopted as a definition
of RDE \eqref{RDE1}.
In that case, the uniqueness of solution may not be obvious at first sight.
However, if $V_i~(0 \le i \le d)$ is $C^3$, the uniqueness
holds for the following reason.
If both $x$ and $\tilde{x}$  satisfy Condition (B) and
$x_a= \tilde{x}_a$,
then $(x, \sigma (x))$ and $(\tilde{x}, \sigma (\tilde{x}))$
solves the RDE in the sense of controlled path theory.
Using the uniqueness of solution in controlled path theory,
we have $(x, \sigma (x)) =(\tilde{x}, \sigma (\tilde{x}))$ and therefore
$x=\tilde{x}$.

\begin{remark}
  By Proposition \ref{pr.18r^d}, Condition (B)
  can be used as an alternative definition of RDEs.
  This type of formulation
  is a variant of Davie's formulation
  and
  was first given by Bailleul \cite{Bai}.
  This definition naturally carries over to the case
  of  manifold-valued RDEs and has the following
  clear advantages (compared to other definitions of
  manifold-valued RDEs):
  \begin{itemize}
    \item
          A solution is a usual path. It is clear what a manifold-valued path is.
          In contrast, in Lyons' and Gubinelli's definitions,
          a solution has ``higher objects."
          On a manifold, however, these higher objects
          do not look very nice although they can be defined.
    \item
          Clearly, it does not depend on the choice of local coordinate.
          The differential structure of the manifold is
          reflected in the class of the test function $f$.
    \item
          No other structure like a connection on the manifold is needed.
  \end{itemize}
  For other papers along this line of research, see \cite{CW, Dr1, Dr2}. In the next section we will define RDEs on a foliated space
  in a parallel way.
\end{remark}

\begin{definition}\label{def.RDEdom}
  Let $x \in C([0,T], B_1)$,
  where $B_1$ is an open subset of $\R^p$,
  and let $V_i$ be a vector fields on $B_1$ ($0 \le i \le d$).
  Choose $\chi \in C^{\infty} (\R^p, \R)$ with compact support
  such that ${\rm supp}~\chi \subset B_1$ and $\chi \equiv 1$
  on a neighborhood of ${\rm Image}~x :=\{x_t\colon t \in [0,T] \}$.
  Set $V_i^* = \chi V_i$ and regard it as a vector field on
  $\R^p$ ($0 \le i \le d$).
  We say that $x$ is a solution to RDE \eqref{RDE1}
  if it solves
  \[
    dx_{t}=\sum_{i=1}^d V_i^* (x_{t})dw^i_{t} + V_0^* (x_{t}) dt.
  \]
  It is easy to see
  that this definition is independent of the choice of $\chi$.
\end{definition}

Define the time-reversal map ${\mathcal R}_T$ of
an ${\mathbb R}^d$-valued $w$ path with $w_0 =0$
by $({\mathcal R}_T w)(t) = w_{T-t} -w_T$.
Then, ${\mathcal R}_T$ is an involution (i.e., ${\mathcal R}_T^2$
is the identity map)
and
preserves the H\"older norm of $w$.
By slightly abusing the notation,
we also define ${\mathcal R}_T \colon
  G\Omega_{\alpha}([0, T], \R^{d}) \to G\Omega_{\alpha}([0, T], \R^{d})$
by
\[
  ({\mathcal R}_T {\bf w})^{1,i}_{s,t} = -  {\bf w}^{1,i}_{T-t, T-s},
  \quad
({\mathcal R}_T {\bf w})^{2,ij}_{s,t} =  {\bf w}^{2,ji}_{T-t, T-s}.
\]
Then, ${\mathcal R}_T$ is an involution and
preserves the $\alpha$-H\"older rough path norm of ${\bf w}$.
It is easy to see that
${\mathcal R}_T {\bf L} (w)= {\bf L}( {\mathcal R}_T w)$
for a Lipschitz path $w$ with $w_0 =0$.
Hence,
$\{ {\bf L} (w_n)\}_{n=1}^\infty$ converges to ${\bf w}$ in $G\Omega_{\alpha}([0, T], \R^{d})$
if and  only if $\{ {\bf L} ({\mathcal R}_T w_n) \}_{n=1}^\infty$
converges to ${\mathcal R}_T{\bf w}$ in $G\Omega_{\alpha}([0, T], \R^{d})$.

Consider RDE \eqref{RDE1} and assume that $V_i$'s are $C^3_b$.
If ${\bf w} ={\bf L} (w)$
for $w \in C^{1-{\rm Hld}}_0 ([0,T], \R^d)$,
then it is well-known that $\xi \mapsto \Psi (\xi, {\bf w}, {\bf V})_T$
and
$\xi \mapsto \Psi (\xi, {\mathcal R}_T{\bf w}, \check{{\bf V}})_T$
are the inverses of each other.
Here, we set $\check{{\bf V}} =[\tilde{V}_1, \ldots, \tilde{V}_d; -\tilde{V}_0]$
(the sign of the drift was changed). See \cite[Section 8.9]{FH} or \cite[Section 11.2]{FV}.

Hence, $\xi \mapsto \Psi (\xi, {\bf w}, {\bf V})_T$ is a
homeomorphism of $\R^p$.
We can show that  this property also holds for a general
geometric rough path ${\bf w}$
by approximating it by (the natural lift of)  a Lipschitz path.

Now we discuss the differentiability
of the flow $\xi \mapsto x_{t} = \Psi (\xi, {\bf w}, {\bf V})_t$
associated with RDE \eqref{RDE1}.
It is already known that if $V_i$'s are of $C^{k+3}_b$,
then $\xi \mapsto \Psi (\xi, {\bf w}, {\bf V})_t$ is  a
$C^{k}$-diffeomorphism
($k\ge 1$)
and the derivatives satisfy certain linear RDEs.
See \cite[Section 11.2]{FV} for example.
Now we quickly recall this fact.

First, we fix the notation.
The standard gradient on $\R^{p}$ is denoted by $\nabla$.
Hence, if $F\colon \R^{p} \to \R^q$ is sufficiently nice,
then $\nabla^k F \colon \R^{p} \to {\mathcal L}^{k}
  (\R^{p} \times\cdots \times \R^{p},  \R^{q})$,
where
${\mathcal L}^{k}
  (\R^{p} \times\cdots \times \R^{p},  \R^{q})$
stands for the set of $k$-linear maps
from $\R^{p} \times\cdots \times \R^{p}$ to $\R^{q}$.

We set
$J^{(1)}_t (\xi) = \nabla \Psi (\xi, {\bf w}, {\bf V})_t$,
$J^{(-1)}_t (\xi) = [J^{(1)}_t (\xi)]^{-1}$,
where the inverse is taken as a $p\times p$-matrix.
We also set
$J^{(l)}_t (\xi) =\nabla^l \Psi (\xi, {\bf w}, {\bf V})_t$
for $l \ge 2$.
(When necessary, we write $J^{(l)}_t (\xi)
  = J^{(l)} (\xi, {\bf w}, {\bf V})_t$, etc.)

The derivative and its inverse satisfy the following
system of RDEs:
\begin{align}
  \label{RDE_J1}
  dJ^{(1)}_t (\xi)  & =
  \sum_{i=1}^d \nabla \tilde{V}_i (x_{t})  J^{(1)}_t (\xi)  dw^i_{t}
  + \nabla \tilde{V}_0 (x_{t}) J^{(1)}_t (\xi) dt,
  \quad
  J^{(1)}_0={\rm Id},
  \\
  \label{RDE_J2}
  dJ^{(-1)}_t (\xi) & =
  -\sum_{i=1}^d  J^{(-1)}_t (\xi) \nabla \tilde{V}_i (x_{t})  dw^i_{t}
  - J^{(-1)}_t (\xi) \nabla \tilde{V}_0 (x_{t}) dt,
  \quad
  J^{(-1)}_0={\rm Id}.
\end{align}
Here, $\nabla \tilde{V}_i$, $J^{(\pm 1)}_t$ are considerded to be
$p\times p$ matrix-valued.
Given the solution of \eqref{RDE1},
RDEs \eqref{RDE_J1}--\eqref{RDE_J2} are linear RDEs.
So, the system of RDEs  \eqref{RDE1}, \eqref{RDE_J1},
\eqref{RDE_J2} has a unique global solution.

For $l \ge 2$,
$J^{(l)}_t (\xi)$ are known to satisfy
simple (inhomogeneous) linear RDEs
with some kind of triangular structure.
\begin{align}
  \label{RDE_Jk}
  dJ^{(l)}_t (\xi) & =
  \sum_{i=1}^d \nabla \tilde{V}_i (x_{t})  J^{(l)}_t (\xi)  dw^i_{t}
  + \nabla \tilde{V}_0 (x_{t}) J^{(l)}_t (\xi) dt
  \\
                   & \quad +
  \mbox{[terms involving $dw^i_t, dt, x_{t},
          J^{(1)}_t (\xi), \ldots, J^{(l-1)}_t (\xi)$]},
  \quad
  J^{(l)}_0=0.
  \nonumber
\end{align}
The precise form of  \eqref{RDE_Jk} is obtained by
formal differentiation of RDE \eqref{RDE1}.
When $l=2$, for instance,
RDE \eqref{RDE_Jk} reads:
\begin{align}
  \label{RDE_Jk=2}
  dJ^{(2)}_t (\xi) & =
  \sum_{i=1}^d \nabla \tilde{V}_i (x_{t})  J^{(2)}_t (\xi)  dw^i_{t}
  + \nabla \tilde{V}_0 (x_{t}) J^{(2)}_t (\xi) dt
  \\
                   & \quad +
  \sum_{i=1}^d
  \nabla^2 \tilde{V}_i (x_{t})  \langle J^{(1)}_t (\xi),
  J^{(1)}_t (\xi) \rangle dw^i_{t}
  \nonumber                  \\
                   & \quad
  +
  \nabla^2 \tilde{V}_0 (x_{t})  \langle J^{(1)}_t (\xi),
  J^{(1)}_t (\xi) \rangle dt,
  \qquad
  J^{(2)}_0=0.
  \nonumber
\end{align}
Here, $\nabla^2 \tilde{V}_i (x_{t})  \langle J^{(1)}_t (\xi),
  J^{(1)}_t (\xi) \rangle
$
denotes the bilinear map
\[
  \R^{p}\times \R^{p}\ni (y, \tilde{y})\mapsto
  \nabla^2 \tilde{V}_i (x_{t}) \langle J^{(1)}_t (\xi) y, J^{(1)}_t (\xi)\tilde{y}\rangle\in \R^{p}.
\]
Due to the triangular structure, RDE \eqref{RDE_Jk}
has a unique global solution and it admits a
Duhamel-type expression in the sense of rough path integral:
\[
  J^{(l)}_t (\xi)
  =
  J^{(1)}_t (\xi)
  \int_0^t  J^{(-1)}_s (\xi)  \mbox{[terms involving $dw^i_s, ds, x_s,
          J^{(1)}_s (\xi), \ldots, J^{(l-1)}_s (\xi)$]}.
\]
Therefore,
the system of RDEs \eqref{RDE1}, \eqref{RDE_J1},
\eqref{RDE_J2}, \eqref{RDE_Jk} with $2 \le l \le k$
has a unique global solution.
Once we know a unique global solution exists,
we can prove Lyons' continuity theorem for
this system of RDEs by a standard cut-off argument.
This is summarized as follows
(this is essentially shown in \cite[Section 10.7]{FV}):

\begin{proposition}\label{pr.cont_Jk}
  Let the notation be as above and $k \ge 1$.
  Assume
  that the vector fields $V_i~(0 \le i \le d)$ are of $C^{k+3}_b$.
  Then, the unique global solution of
  the system of RDEs \eqref{RDE1}, \eqref{RDE_J1},
  \eqref{RDE_J2}, \eqref{RDE_Jk} with $2 \le l \le k$
  satisfies the following estimate:
  If
  \[
    \max_{i=1,2} \|{\bf w}^i \|_{i\alpha}^{1/i} \le K, \quad
    \max_{i=1,2} \|\hat {\bf w}^i \|_{i\alpha}^{1/i} \le K,\quad
    \| {\bf V}\|_{C^{k+3}_b} \le K, \quad
    \| \hat{\bf V}\|_{C^{k+3}_b} \le K
  \]
  for $K>0$, then there exists a positive constant
  $C^\prime_K$ which depends only on $K$ such that
  \[
    \|   J^{(l)}  (\xi, {\bf w}, {\bf V})_t
    -  J^{(l)} (\hat\xi, \hat{\bf w}, \hat{\bf V})_t
    \|_{\alpha} \le C^\prime_K (|\xi -\hat\xi| +
    {\rm d}_{\alpha}({\bf w},  \hat{\bf w})  +\|{\bf V} -\hat{\bf V}\|_{C^{k+3}_b})
  \]
  for $l=-1, 1,2, \ldots, k$.
  Here, the norm on the left hand side stands for the
  $\alpha$-H\"older seminorm.
\end{proposition}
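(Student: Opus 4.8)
The plan is to recast the entire hierarchy \eqref{RDE1}, \eqref{RDE_J1}, \eqref{RDE_J2}, \eqref{RDE_Jk} as a single RDE in a higher-dimensional space and then quote Lyons' continuity theorem in the form of Proposition \ref{pr.cont_LI}, after a cut-off that renders the coefficients of that single RDE bounded. Concretely, I would stack the unknowns into one vector
$Y_t = (x_t, J^{(1)}_t (\xi), J^{(-1)}_t (\xi), J^{(2)}_t (\xi), \ldots, J^{(k)}_t (\xi)) \in \R^N$
for a suitable $N = N(p,k)$, so that the system becomes a single RDE
$dY_t = \sum_{i=1}^d W_i (Y_t)\, dw^i_t + W_0 (Y_t)\, dt$
with initial value $Y_0 = (\xi, {\rm Id}, {\rm Id}, 0, \ldots, 0)$ whose vector fields $W_i$ on $\R^N$ are assembled algebraically from $\tilde V_i$ and its derivatives up to order $k$. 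In particular $W_i \in C^3$ whenever $\tilde V_i \in C^{k+3}$, and each $W_i$ is polynomial of bounded degree in the Jacobian coordinates of $Y$ with coefficients that are $C^3_b$ functions of the $x$-coordinate whose norms are controlled by $\|{\bf V}\|_{C^{k+3}_b}$; it is not, however, globally $C^3_b$, because of the (at worst linear) growth in the Jacobian coordinates, so Proposition \ref{pr.cont_LI} cannot be applied to it directly.

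Next I would establish an a priori bound: under the stated constraints there exists $R = R(K)$ such that the Jacobian part of $Y$ stays in the ball $B(0,R) \subset \R^{N-p}$. Here one uses that $\|\nabla^m \tilde V_i\|_\infty \le K$ for $m \le k+3$, so that \eqref{RDE_J1} and \eqref{RDE_J2} are linear RDEs with $K$-bounded coefficients; a Gronwall-type estimate for linear RDEs in the rough path sense gives $\|J^{(\pm 1)}\|_\alpha \le R_1(K)$, and then the triangular structure of \eqref{RDE_Jk} together with the Duhamel representation recalled above and the standard bounds for rough integrals yields $\|J^{(l)}\|_\alpha \le R_l(K)$ inductively for $2 \le l \le k$. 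This bound is insensitive to $\xi$ since it concerns only the Jacobian coordinates.

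Then, fixing $\Theta \in C^\infty_c (\R^{N-p})$ with $\Theta \equiv 1$ on $B(0,R)$ and ${\rm supp}\,\Theta \subset B(0,2R)$, I replace $W_i$ by $W_i^\Theta(y) := \Theta(\pi(y))\, W_i(y)$, where $\pi$ projects $\R^N$ onto the Jacobian coordinates. The resulting ${\bf W}^\Theta$ lies in $C^3_b (\R^N, \R^N \otimes \R^{d+1})$ with $\|{\bf W}^\Theta\|_{C^3_b} \le C(R,K) =: K'$, and, $\Theta$ being fixed, $\|{\bf W}^\Theta - \hat{\bf W}^\Theta\|_{C^3_b} \le C(R,K)\,\|{\bf V} - \hat{\bf V}\|_{C^{k+3}_b}$. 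By the a priori bound the solution of the cut-off RDE with coefficients ${\bf W}^\Theta$, the same driver ${\bf w}$ and the same initial point coincides with $Y$ on $[0,T]$ (and likewise for the hatted data), so Proposition \ref{pr.cont_LI} applied to the cut-off RDE with parameter $\max(K,K')$ --- still depending only on $K$ --- gives
\[
  \|Y - \hat Y\|_\alpha \le C'_K \bigl( |\xi - \hat\xi| + {\rm d}_\alpha ({\bf w}, \hat{\bf w}) + \|{\bf V} - \hat{\bf V}\|_{C^{k+3}_b} \bigr),
\]
since $Y_0 - \hat Y_0$ involves only $\xi - \hat\xi$. Projecting onto each $J^{(l)}$-coordinate, $l \in \{-1,1,2,\ldots,k\}$, finishes the proof.

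The main obstacle is the a priori bound of the second paragraph: obtaining the $\alpha$-H\"older bounds $R_l(K)$ for the processes $J^{(l)}$ with constants depending only on $K$ (and not on $\xi$ or on the oscillation of $x$) requires Gronwall-type estimates for linear RDEs in the rough path setting and a careful induction through the triangular hierarchy \eqref{RDE_J1}, \eqref{RDE_J2}, \eqref{RDE_Jk}. This is precisely the content of \cite[Section 10.7]{FV}, so in practice the proposition may be cited from there, the paragraphs above serving only to record the reduction.
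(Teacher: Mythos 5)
Your proposal is correct and follows essentially the same route as the paper, which disposes of this proposition by exactly the ``standard cut-off argument'' you describe (stack the triangular hierarchy into one RDE, use the linear/triangular structure to get a $K$-dependent a priori bound on the Jacobian components, cut off, and apply Lyons' continuity theorem, Proposition \ref{pr.cont_LI}), with the quantitative estimates credited to \cite[Section 10.7]{FV}. Your write-up merely makes explicit the steps the paper leaves implicit, and the one point you flag as the main obstacle (the a priori H\"older bounds for the linear RDEs depending only on $K$) is precisely what the paper also delegates to \cite{FV}.
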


\begin{remark}
  Let $F\colon \R^p \to \R^p$ be $C^{k}$ and assume that $\nabla F (\xi) = :J_F (\xi)$ is invertible at every $\xi$
  (recall that $J_F (\xi)^{-1} = J_{F^{-1}} (F(\xi))$).
  Let $k \ge 2$.
  Then, by standard argument
  $\nabla^{k} F^{-1}$ can be written in terms of
  $F, F^{-1}$,
  $J_F$ and $\nabla^{l} F$ for $1 \le l \le k$.
  For example, if we denote $G (\xi) = F^{-1} (\xi)$,
  \[
    (\nabla^{2} G) (F(\xi))
    =
    - J_F (\xi)^{-1} \circ (\nabla^2 F )(\xi)
    \langle J_F (\xi)^{-1} \star,  \, J_F (\xi)^{-1} \bullet \rangle.
  \]
  For this reason,
  we need not compute RDEs concretely
  for
  $\nabla^k [ \Psi (\bullet, {\bf w}, {\bf V})^{-1}_t]$ for $k \ge 2$.
\end{remark}

For the rest of this section, we introduce Brownian rough path.
Let $\mu$ be the $d$-dimensional Wiener measure,
that is, the law of the standard $d$-dimensional Brownian motion
starting at $0$.
It sits on $C_0 ([0,T], \R^d)$.
Set
\begin{equation}\label{def.nullset}
  {\mathcal N}
  =\{
  w \in C_0 ([0,T], \R^d) \colon  \mbox{$\{{\bf W}(m) \}_{m=1}^{\infty}$
  is not Cauchy in $G\Omega_{\alpha}([0, T], \R^{d})$}
  \}
\end{equation}
where $w(m)$ is the piecewise linear approximation of $w$
associated with  the dyadic partition $\{ jT/2^m \}_{0 \le j\le 2^m}$
and ${\bf W}(m) = {\bf L} (w(m))$.
It is known that with respect to the Wiener measure $\mu$,
$\mu ({\mathcal N})=0$.
Hence, under $\mu$,
${\bf W} := \lim_{m \to \infty} {\bf W}(m)$ defines a random variable
that takes values in $G\Omega_{\alpha}([0, T], \R^{d})$.
We call it Brownian rough path.

The Stratonovich-type SDE corresponding to RDE \eqref{RDE1} reads
\begin{equation}\label{SDE1}
  dX_{t}=\sum_{i=1}^d V_i (X_{t}) \circ dw^i_{t} + V_0 (x_{t}) dt,
\end{equation}
where $(w_t)$ stands for the canonical realization of
the $d$-dimensional Brownian motion.
When we specify the initial condition we write $X_t =X(t, \xi, w)$
(the dependence on $V_i$'s is suppressed).

If $V_i$'s are $C^3_b$, then
Lyons' continuity theorem (Proposition \ref{pr.cont_LI})
and Wong-Zakai's approximation theorem
imply that
\begin{equation}\label{eq.WZ}
  \Psi (\xi, {\bf W}, {\bf V})=\lim_{m \to \infty}
  \Psi (\xi, {\bf W} (m), {\bf V}) =X(\bullet, \xi, w)
  \qquad
  \mbox{in $C^{\alpha-{\rm Hld}} ([0,T], \R^p)$, $\mu$-a.s.}
\end{equation}
Thus, the solution of SDE is obtained
as a {\it continuous} image of ${\bf W}$.

Next, consider the case that $V_i$'s are only $C^3$.
Under this condition
RDE \eqref{RDE1} may not have
a global solution for some ${\bf w}$.
If we assume that
a unique solution to SDE \eqref{SDE1} does not explode,
then we can prove that
\eqref{eq.WZ} still holds by standard cut-off technique.
(In particular, $\mu (\{w \in {\mathcal N}^c \colon
  \mbox{$\Psi (\xi, {\bf W}, {\bf V})$ does not explode} \}) =1$.)

\section{RDE on foliated space}\label{sec.RDE on foliated spaces}

Now we define RDE on a compact foliated space ${\mathcal M}$.
For  leafwise $C^3$-vector fields $V_i~(0 \le i \le d)$
on ${\mathcal M}$
and
${\bf w}\in G\Omega_{\alpha}([0, T], \R^{d})$, we consider
the following RDE (with drift)
on the time interval $[a,b] \subset [0,T]$:
\begin{equation}\label{RDE2}
  dx_{t}=\sum_{i=1}^d V_i (x_{t})dw^i_{t} + V_0 (x_{t})dt.
\end{equation}

\begin{definition}\label{def.181000}
  A continuous path $x \in C([a,b], {\mathcal M})$ is
  said to be a solution to RDE \eqref{RDE2}
  with initial condition $m \in {\mathcal M}$
  if the following conditions are satisfied:
  \begin{enumerate}
    \item
          $x_{a}=m$.
    \item
          For every leafwise $C^3$-function $f$ on ${\mathcal M}$, it holds that
          \begin{align}
            f(x_{t})-f(x_{s})
             & = \sum_{i=1}^{d}V_{i}f(x_{s}){\bf w}^{1, i}_{s, t}
            +\sum_{j,k=1}^{d}V_{j}V_{k}f(x_{s}){\bf w}^{2, jk}_{s, t}
            \nonumber                                             \\
             & \qquad
            +V_{0}f(x_{s})(t-s)
            +O(|t-s|^{3\alpha})
            \label{eq:190104-1}
          \end{align}
          for $(s, t)  \in \triangle_{[a,b]}$.
          (Here, the estimate of $O(|t-s|^{3\alpha})$ may depend on
          $f$, $[a,b]$, ${\bf w}$, $x$ and $V_i$'s,
          but not on (s,t). See Remark \ref{rem.190920}.)
  \end{enumerate}
  If $x$ solves RDE \eqref{RDE2} for a certain
  initial condition $m$,
  $x$ is simply said to be a solution to RDE \eqref{RDE2}.
\end{definition}

It is obvious that if $x \in C([a,b], {\mathcal M})$ is a solution
to RDE \eqref{RDE2}
as in Definition \ref{def.181000} above
and $[\sigma, \tau] \subset [a,b]$, then its restriction
$x{\restriction_{[\sigma, \tau]}}$ to $[\sigma, \tau]$ is again a solution
to RDE \eqref{RDE2}.

As one can easily expect, a solution to RDE  \eqref{RDE2}
never gets out of the leaf that contains the initial point.
\begin{lemma}\label{lem.121201}
  Let $x \in C([a,b], {\mathcal M})$ be
  a solution to RDE \eqref{RDE2}
  with initial condition $m \in {\mathcal M}$ as in Definition
  \ref{def.181000}.
  Then, $x$ stays in one leaf, that is,
  $x_{s} \in {\mathcal L}_m$ for every $s \in [a,b]$.
  More precisely, if $x \in C([a,b], {\mathcal M})$
  is a solution that stays in a local chart,
  then it stays in one plaque.
\end{lemma}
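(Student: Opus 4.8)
The plan is to reduce the global statement to the local one and then prove the local one by a clean test-function argument using the foliated chart structure and Condition (2) of Definition \ref{def.181000}. For the reduction, observe that $[a,b]$ is compact and $x$ is continuous, so by the Lebesgue number lemma applied to the open cover $\{x^{-1}(U_\alpha)\}$ of $[a,b]$ we can partition $[a,b]$ into finitely many subintervals $a = s_0 < s_1 < \cdots < s_N = b$ such that $x([s_{j-1},s_j])$ is contained in a single foliated chart $U_{\alpha_j}$ for each $j$. Since the restriction of $x$ to each $[s_{j-1},s_j]$ is again a solution to RDE \eqref{RDE2} (as noted immediately before the lemma), it suffices to prove the ``more precise'' local claim: a solution staying in one chart stays in one plaque. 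Then a chaining argument across the $s_j$'s, using that consecutive pieces share the endpoint $x_{s_j}$ which lies in both the plaque of $U_{\alpha_j}$ through $x_{s_{j-1}}$ and the plaque of $U_{\alpha_{j+1}}$ through $x_{s_j}$, shows that all the plaques lie in a common leaf, namely $\mathcal{L}_m$.

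For the local claim, suppose $x \in C([a,b],{\mathcal M})$ is a solution with image in a single foliated chart $(U,\varphi)$, $\varphi = (y,z) \colon U \to B_1 \times B_2$ with $B_1 \subset \R^p$ connected open and $B_2 \subset \mathcal{Z}$ open. Write $x_t = (y_t, z_t)$ in these coordinates. I want to show $z_t$ is constant on $[a,b]$. The key point is that the transversal coordinate functions are \emph{leafwise locally constant} in the appropriate sense: although $z \colon U \to B_2 \subset \mathcal{Z}$ is only $\mathcal{Z}$-valued, I can compose with continuous functions on $\mathcal{Z}$. Fix any bounded continuous $g \colon \mathcal{Z} \to \R$ and let $f = g \circ z$ on $U$; after multiplying by a cutoff in the leaf direction if necessary, this extends to a leafwise $C^3$-function on $\mathcal{M}$ (it is constant along plaques, hence trivially $C^\infty$ in the $y$-variables, and continuous in $z$). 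For such $f$ one has $V_i f \equiv 0$ for all $i$, since each $V_i(m) \in T_m(\mathcal{L}_m)$ is tangent to the leaf and $f$ is constant on plaques; consequently $V_i V_j f \equiv 0$ and $V_0 f \equiv 0$ as well. Plugging $f$ into \eqref{eq:190104-1} gives $g(z_t) - g(z_s) = O(|t-s|^{3\alpha})$ for all $(s,t) \in \triangle_{[a,b]}$ with $3\alpha > 1$, and a standard telescoping over a partition of $[s,t]$ forces $g(z_t) = g(z_s)$. Since this holds for every bounded continuous $g$ on $\mathcal{Z}$ and $\mathcal{Z}$ is metrizable (so its topology separates points via continuous functions), we conclude $z_t = z_s$ for all $s,t$; that is, $x$ stays in the single plaque $\varphi^{-1}(B_1 \times \{z_a\})$.

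The main obstacle, and the step requiring the most care, is verifying that the locally-defined function $f = g \circ z$ (or a cutoff version of it) genuinely qualifies as a \emph{leafwise $C^3$-function on all of $\mathcal{M}$} as required by Definition \ref{def.181000}(2), so that \eqref{eq:190104-1} may be applied to it — and that it genuinely kills all the vector field terms. For the first issue, one multiplies by a leafwise-smooth bump supported in $U$: in a foliated chart the function $(y,z) \mapsto \beta(y) g(z)$ with $\beta \in C_c^\infty(B_1)$ equal to $1$ near the (relatively compact, by shrinking slightly) $y$-range of $x$ is manifestly of class $C_L^k$ for all $k$, and globally it extends by zero; the estimate \eqref{eq:190104-1} for this global $f$ coincides with the one for $g\circ z$ on the relevant range of $x$. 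For the second issue, one notes that in the chart $V_i$ has components only in the $\partial/\partial y^j$ directions (by definition of a leafwise vector field and the basis \eqref{eq.basis}), so $V_i f = \sum_j \tilde V_i^j \, \partial_{y^j}(\beta g) = g(z)\sum_j \tilde V_i^j \partial_{y^j}\beta$, which vanishes wherever $\beta \equiv 1$, i.e.\ on a neighborhood of $\mathrm{Image}\,x$; the same holds for $V_iV_jf$ and $V_0f$. Since \eqref{eq:190104-1} only ever evaluates these at points $x_s$, all the vector-field terms drop out, and the argument of the previous paragraph goes through. Everything else — the Lebesgue-number partition, the telescoping to upgrade $O(|t-s|^{3\alpha})$ with $3\alpha>1$ to a constancy statement, and the point-separation step — is routine.
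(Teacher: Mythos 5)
Your proof is correct, but the core of your local argument is genuinely different from the paper's. The paper argues by contradiction: assuming $z_t$ leaves $z_a$, it builds one bespoke test function $f$ with $f\circ\phi = g(y)h(z)$, where $h(z)=|N^{-1}(\mathrm{d}_{\mathcal Z}(z_a,z))-a|^{\alpha/2}$ is engineered from the modulus of continuity of $t\mapsto z_t$ (via the interpolated function $N$), and observes that \eqref{eq:190104-1} forces $f(x_\cdot)$ to be $\alpha$-H\"older, while the constructed $f$ would give $|f(x_{a_j})-f(x_a)|/(a_j-a)^{\alpha}\to\infty$. You instead test against \emph{all} purely transverse functions $g(z)$ (times a cutoff), note that every vector-field term in \eqref{eq:190104-1} vanishes at points of the image because the $V_i$ only differentiate in the leaf direction, conclude $g(z_t)-g(z_s)=O(|t-s|^{3\alpha})$ with $3\alpha>1$, and upgrade this to constancy by telescoping and point separation in the metrizable $\mathcal Z$. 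Your route is more direct and exploits the full remainder exponent $3\alpha>1$, whereas the paper's only uses the $\alpha$-H\"older regularity of $f(x_\cdot)$ (so it would survive with weaker remainder information) at the cost of a trickier construction; you also spell out the Lebesgue-number/chaining reduction from leaf to plaque, which the paper leaves implicit. The one point to tighten: extending $\beta(y)g(z)$ by zero outside $U$ need not give a continuous function on $\mathcal M$, since the closure of $\mathrm{supp}\,\beta\times B_2$ may meet $\overline U\setminus U$ at points where $g\neq 0$; you should also insert a continuous transverse cutoff $\gamma$ with compact support in $B_2$ and $\gamma\equiv 1$ on a neighborhood of the (compact) transverse image of $x$, which exists by Urysohn and local compactness and changes none of your computations because the $V_i$'s act only on the $y$-variable. (The paper's own ``take a leafwise smooth $f$ agreeing with $g(y)h(z)$ near $x_a$'' glosses over the same construction.)
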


\begin{proof}
  Let $x \in C([a,b], {\mathcal M})$
  be a solution that stays in a local chart
  $\phi \colon U \simeq B_1 \times B_2$
  and
  write $\phi (x_t) =(y_t, z_t)$.
  Here, $B_1$ and $B_2$ are open sets of
  ${\mathbb R}^p$ and ${\mathcal Z}$, respectively.
  Assume that $(x_t)$ gets out of the plaque indexed by $z_a$
  and set $\sigma = \inf\{ t \in [a,b] \colon z_{t} \neq z_a \}
    \in [a, b)$.
  We will prove the statement by contradiction.
  Without loss of generality, we may and do assume $\sigma=a$.

  Set $M_t = \max_{a \le s \le t} {\rm d}_{{\mathcal Z}} (z_a, z_s)$.
  Then, $M$ is continuous and non-decreasing.
  For sufficiently large $j \in {\mathbb N}$,
  set $a_j = \inf\{ t \in (a,b] \colon  M_t =2^{-j} \}$.
  Then, $\{a_j\}$ is strictly decreasing to $a$
  and $M_{a_j} =2^{-j}= {\rm d}_{{\mathcal Z}} (z_a, z_{a_j})$.
  Define a continuous and strictly increasing function $N$
  by linearly interpolating $(a_{j+1}, 2^{-(j+1)})$ and $(a_j, 2^{-j})$
  and setting $N_a=0$.
  Then, $N^{-1}$ is also continuous and strictly increasing.
  If we set $h(z) = |N^{-1}({\rm d}_{{\mathcal Z}} (z_a, z)) -a|^{\alpha /2}$,
  it defines a bounded continuous function near $z_a$.
  Choose a smooth, compactly supported function $g$ on $B_1$
  such that $g(y) \equiv 1$ near $y_a$.
  Take a leafwise smooth function $f$
  such that $f \circ \phi$ agrees with
  $g(y)h(z)$ near $x_a=\phi^{-1}(y_a, z_a)$.
  Then, for sufficiently large $j$,
  $|f(x_{a_j})-f(x_a)|/(a_j -a)^{\alpha} =(a_j -a)^{-\alpha/2} \nearrow \infty$.
  This implies that $f(x_{\cdot})$ cannot be $\alpha$-H\"older continous,
  which contradicts the assumption that $x$ solves the RDE.
\end{proof}

\begin{lemma}\label{lem.RDEloc}
  Assume that $x \in C([a, b], {\mathcal M})$
  stay in a  coordinate chart $U$ with  chart map
  $\phi\colon U \simeq B_1 \times B_2$,
  where $B_1$ and $B_2$ are open sets of
  ${\mathbb R}^p$ and ${\mathcal Z}$, respectively.
  Write $\phi (x_t) =(y_t, z_t)$.
  Then, $x$ solves RDE \eqref{RDE2} on ${\mathcal M}$ if and only if
  $z_t$ is constant on $[a,b]$ and
  $(y_t)_{t \in [a,b]}$ solves the following RDE on $B_1$
  in the sense of Definition \ref{def.RDEdom}:
  \[
    dy_{t}=\sum_{i=1}^d V_i (y_{t}, z_a)dw^i_{t} + V_0 (y_{t}, z_a) dt.
  \]
  Here, $V_i$ is regarded as a vector filed on $B_1 \subset \R^p$
  in the natural way ($0 \le i \le d$).
\end{lemma}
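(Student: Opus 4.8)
The plan is to work entirely in the fixed foliated chart $U$ and to reduce both sides of the claimed equivalence to Condition~(B) of Proposition~\ref{pr.18r^d}. The starting observation is that, by the ``more precisely'' part of Lemma~\ref{lem.121201}, any solution of RDE~\eqref{RDE2} that stays in $U$ stays in a single plaque, i.e. $z_t\equiv z_a$; so in the ``only if'' direction constancy of $z_t$ comes for free, while in the ``if'' direction it is assumed. Granting $z_t\equiv z_a$, the key elementary fact is that for a leafwise $C^{3}$ function $f$ on $\mathcal M$ with local representation $\hat f=f\circ\phi^{-1}$ on $B_1\times B_2$ one has, along the path, $f(x_t)=\hat f(y_t,z_a)$ and (because the transversal coordinate never moves) $V_if(x_t)=\bigl(V_i(\cdot,z_a)\,\hat f(\cdot,z_a)\bigr)(y_t)$, $V_jV_kf(x_t)=\bigl(V_j(\cdot,z_a)\,V_k(\cdot,z_a)\,\hat f(\cdot,z_a)\bigr)(y_t)$, and likewise for $V_0f$. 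Hence \eqref{eq:190104-1} for $x$ tested against $f$ is \emph{literally} Condition~(B) for $(y_t)$ tested against $\hat f(\cdot,z_a)$ relative to the vector fields $V_i(\cdot,z_a)$ on $B_1$ (which are genuinely $C^3$ there, ``leafwise $C^3$'' meaning $C^3$ in the leaf coordinate). Since $\mathrm{Image}\,x$ is compact in $U$, the images of $y$ and of $z$ are compact in $B_1$ and in $B_2$, so all cut-offs used below exist.

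To prove ``$x$ solves \eqref{RDE2} $\Rightarrow$ $(y_t)$ solves the displayed RDE on $B_1$'': fix an arbitrary $\tilde f\in C^3(\R^p,\R)$, choose $\chi_1\in C^\infty_c(B_1)$ and a continuous, compactly supported $\chi_2$ on $B_2$, both $\equiv1$ near the (compact) images of $y$ and of $z$, and let $f$ be the leafwise $C^3$ function on $\mathcal M$ obtained by extending $\chi_1(y)\chi_2(z)\tilde f(y)$ by zero off $U$. Plugging this $f$ into Definition~\ref{def.181000}(2) and using that $z_s\equiv z_a$ and that $\chi_1,\chi_2$ are locally constant along the path, one gets Condition~(B) for $(y_t)$ against the arbitrary $\tilde f$, relative to $V_i(\cdot,z_a)$; by Proposition~\ref{pr.18r^d} together with Definition~\ref{def.RDEdom} (insert a cut-off $\chi$ that is $\equiv1$ near $\mathrm{Image}\,y$), this is exactly the statement that $(y_t)$ solves $dy_t=\sum_i V_i(y_t,z_a)\,dw^i_t+V_0(y_t,z_a)\,dt$ on $B_1$. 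Matching of the initial conditions, $(y_a,z_a)=\phi(m)$, is immediate.

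For the converse, assume $z_t\equiv z_a$ and that $(y_t)$ solves the RDE on $B_1$. Let $f$ be an arbitrary leafwise $C^3$ function on $\mathcal M$ with local representation $\hat f$, and pick $g\in C^3(\R^p,\R)$ that agrees with $\hat f(\cdot,z_a)$ on a neighbourhood of $\mathrm{Image}\,y$ on which the cut-off of Definition~\ref{def.RDEdom} equals $1$. By Proposition~\ref{pr.18r^d}, $(y_t)$ satisfies Condition~(B) against $g$ relative to the cut-off vector fields $V_i^{*}$, which coincide with $V_i(\cdot,z_a)$ along $\mathrm{Image}\,y$ (the derivatives of the cut-off vanishing there); translating back via $f(x_s)=g(y_s)$, $V_if(x_s)=\bigl(V_i(\cdot,z_a)g\bigr)(y_s)$, $V_jV_kf(x_s)=\bigl(V_j(\cdot,z_a)V_k(\cdot,z_a)g\bigr)(y_s)$, etc., yields \eqref{eq:190104-1} for $x$ tested against $f$. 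As $f$ was arbitrary and $x_a=m$, $x$ solves RDE~\eqref{RDE2}.

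I expect no conceptual difficulty here: once Lemma~\ref{lem.121201} collapses the transversal direction to the single point $z_a$, the argument is a change-of-variables bookkeeping. The one place deserving care is the honesty of the auxiliary test functions and of the cut-offs: one must check that $\chi_1(y)\chi_2(z)\tilde f(y)$ really defines a leafwise $C^3$ function on all of $\mathcal M$ (it does, being $C^3$ in $y$, continuous in $z$, and compactly supported inside $U$, hence extendable by zero), and that none of the cut-offs --- for the test functions, or the $\chi$ hidden in Definition~\ref{def.RDEdom} --- alter the values of the vector fields and their iterates along the path, nor spoil the $O(|t-s|^{3\alpha})$ remainder; both hold because every cut-off is locally constant on the compact image of the path.
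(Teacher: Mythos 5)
Your argument is correct and is exactly the elaboration the paper has in mind: the paper omits the proof of this lemma, saying it is "almost obvious" from Lemma \ref{lem.121201} and the definitions, and your write-up fills in precisely that route (constancy of $z_t$ via Lemma \ref{lem.121201}, then matching Condition (B) of Proposition \ref{pr.18r^d} against Definition \ref{def.181000} through chart test functions and cut-offs that are locally constant near the path's image). No gaps; the care you take with the extension-by-zero of the test function and with the cut-off in Definition \ref{def.RDEdom} is the only delicate point, and you handle it correctly.
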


\begin{proof}
  By Lemma \ref{lem.121201} and
  the way the RDEs are defined, this lemma is almost obvious.
  So, the proof is omitted.
\end{proof}

\begin{notation}\label{def.coordinate}
  For $x \in C([a, b], {\mathcal M})$ and
  a partition $a = t_0 < t_1 < \cdots < t_n =b$  such that
  $(x_t)_{t \in [t_{j-1}, t_j]}$ is contained in a coordinate chart
  $U_j$ for each $j~(1 \le j \le n)$,
  we use the following notation.
  \begin{itemize}
    \item
          The chart map is denoted by
          $\phi_j \colon U_j \simeq B_{j,1} \times B_{j,2}$
          for each $j$.
    \item
          We write $\phi_j (x_t) =(y_t^{[j]}, z_{t}^{[j]})$ for $t \in [t_{j-1}, t_j]$
          for each $j$.
    \item
          $\chi_j \colon {\mathbb R}^p \to [0,\infty)$
          is a smooth, compactly supported function
          such that
          ${\rm supp}~\chi_j \subset B_{j,1}$
          and $\chi_j \equiv 1$ on a certain neighborhood $O_j$ of
          ${\rm Image}~y^{[j]}=\{ y_t^{[j]} \colon t \in [t_{j-1}, t_j] \}$.
    \item
          Set $U_{i}( \,\cdot\, ;z_j) = \chi_j ( \,\cdot\,) V_i ( \,\cdot\,,z_j)$
          for $z_j \in B_{j,2}$ and $0 \le i \le d$.
          If $V_i$ is leafwise $C^k$, then $U_{i}( \,\cdot\, ;z_j)$
          is a $C^k$-vector field on $\R^p$ with compact support
          ($0 \le k \le \infty$).
          (Recall that every continuous function on a compact metric space
          is uniformly continuous.
          Hence, if $V_j$ is leafwise $C^k$ then
          \begin{equation}\label{eq.190123-1}
            \| \tilde{U}_{i}( \,\cdot\, ;z_j^{\prime}) - \tilde{U}_{i}( \,\cdot\, ;z_j) \|_{C^k_b}
            \to 0 \qquad \mbox{as $z_j^{\prime}\to z_j$ in $B_{j,2}$}
          \end{equation}
          for all $0 \le k < \infty$.)
  \end{itemize}
\end{notation}

\begin{lemma}\label{lm.prolong}
  Let $0\le a<c<b \le T$.
  Suppose that $(x_t)_{t \in [a,b]}$ is
  a continuous path in ${\mathcal M}$
  such  that $(x_t)_{t \in [a,c]}$ and $(x_t)_{t \in [c, b]}$
  solve RDE \eqref{RDE2} on $[a,c]$ and $[c,b]$, respectively.
  Then,  $(x_t)_{t \in [a,b]}$ solves RDE \eqref{RDE2} on $[a,b]$.
\end{lemma}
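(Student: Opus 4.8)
The plan is to verify Definition \ref{def.181000} directly for $(x_t)_{t\in[a,b]}$. Condition (1), $x_a=m$, is inherited from the hypothesis on $[a,c]$. The work is all in Condition (2): for an arbitrary leafwise $C^3$-function $f$ on $\mathcal{M}$, we must produce a constant $C$ such that the remainder in \eqref{eq:190104-1} is bounded by $C|t-s|^{3\alpha}$ for \emph{all} $(s,t)\in\triangle_{[a,b]}$, not merely for $s,t$ both in $[a,c]$ or both in $[c,b]$. The only genuinely new case is $a\le s<c<t\le b$; I would treat that case by splitting at $u=c$ and using the algebraic (Chen) relations \eqref{def.chen} of the rough path to reassemble the two pieces.

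First I would fix notation: write
\[
  E_{s,t}:=f(x_t)-f(x_s)-\sum_{i=1}^d V_i f(x_s){\bf w}^{1,i}_{s,t}
  -\sum_{j,k=1}^d V_jV_kf(x_s){\bf w}^{2,jk}_{s,t}-V_0f(x_s)(t-s)
\]
for the remainder, and let $C_1$, $C_2$ be the constants supplied by the hypotheses on $[a,c]$ and $[c,b]$, so $|E_{s,u}|\le C_1|u-s|^{3\alpha}$ when $s\le u\le c$ and $|E_{u,t}|\le C_2|t-u|^{3\alpha}$ when $c\le u\le t$. For $s<c<t$ I would write $E_{s,t}=E_{s,c}+E_{c,t}+R_{s,c,t}$, where $R_{s,c,t}$ collects the cross terms that arise from the failure of additivity: using ${\bf w}^{1,i}_{s,t}={\bf w}^{1,i}_{s,c}+{\bf w}^{1,i}_{c,t}$ and ${\bf w}^{2,jk}_{s,t}={\bf w}^{2,jk}_{s,c}+{\bf w}^{2,jk}_{c,t}+{\bf w}^{1,j}_{s,c}{\bf w}^{1,k}_{c,t}$ from \eqref{def.chen}, one finds
\[
  R_{s,c,t}=\sum_{i=1}^d\bigl(V_if(x_s)-V_if(x_c)\bigr){\bf w}^{1,i}_{c,t}
  +\sum_{j,k=1}^d\bigl(V_jV_kf(x_s)-V_jV_kf(x_c)\bigr){\bf w}^{2,jk}_{c,t}
  -\sum_{j,k=1}^d V_jV_kf(x_s){\bf w}^{1,j}_{s,c}{\bf w}^{1,k}_{c,t},
\]
together with the drift contribution, which cancels exactly. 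Each of the first two sums is handled by combining the H\"older bound \eqref{def.hoel} on ${\bf w}^i$ with the H\"older regularity of $t\mapsto x_t$ inside a chart: applying Lemma \ref{lem.RDEloc} near $x_c$ shows the path is locally $\alpha$-H\"older, hence $|V_if(x_s)-V_if(x_c)|=O(|s-c|^\alpha)$ and likewise for $V_jV_kf$, since $V_if$ and $V_jV_kf$ are continuous; this yields a bound of order $|s-c|^\alpha|t-c|^\alpha\le|t-s|^{2\alpha}$ — but we need $3\alpha$, so the crude continuity estimate is not enough and one must instead expand $V_if(x_s)-V_if(x_c)$ to first order, getting an $O(|s-c|^{2\alpha})$ term plus a first-order term that pairs against ${\bf w}^{1,i}_{c,t}$. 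The last sum, bearing the product ${\bf w}^{1,j}_{s,c}{\bf w}^{1,k}_{c,t}$, is directly $O(|s-c|^\alpha|t-c|^\alpha)$, which is again only $2\alpha$; this is where the shuffle relation \eqref{eq.shfl} must be invoked, rewriting ${\bf w}^{1,j}_{s,c}{\bf w}^{1,k}_{s,c}={\bf w}^{2,jk}_{s,c}+{\bf w}^{2,kj}_{s,c}$, so that the apparently troublesome cross term gets absorbed into the level-two increments at the cost of terms that are controlled by the two given remainder estimates.

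The main obstacle, then, is precisely the bookkeeping that shows the cross term $R_{s,c,t}$ is $O(|t-s|^{3\alpha})$ rather than just $O(|t-s|^{2\alpha})$: the naive estimates lose a factor of $|t-s|^\alpha$, and recovering it requires either (a) reducing to a chart via Lemmas \ref{lem.121201} and \ref{lem.RDEloc} and then quoting the known Euclidean prolongation / patching result for RDEs in Davie--Bailleul form (Proposition \ref{pr.18r^d}), which already encodes the correct remainder behaviour, or (b) Taylor-expanding $f$ around $x_c$ and matching the second-order terms against the Chen-corrected level-two increments, using the symmetry of $\nabla^2 f$ and the shuffle relation exactly as in the proof of Proposition \ref{pr.18r^d}. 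I would take route (a): by Lemma \ref{lem.121201} the whole path $(x_t)_{t\in[a,b]}$ lies in a single leaf, and by compactness of $[a,b]$ one can cover $\mathrm{Image}\,x$ by finitely many charts; refining the partition so that $c$ is a partition point and each subinterval sits in one chart, the statement on $[a,b]$ reduces — via Lemma \ref{lem.RDEloc} — to the statement that a path in $B_1\subset\R^p$ which solves the Euclidean RDE on $[a,c]$ and on $[c,b]$ solves it on $[a,b]$; and that is immediate from the equivalence (A)$\iff$(B) in Proposition \ref{pr.18r^d}, since condition (A) (being the first level path of the unique solution, which is built by patching on any partition) is manifestly stable under concatenation of adjacent subintervals. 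Finally, I would note the statement extends by induction to any finite number of adjacent subintervals, which is the form in which the lemma will be used.
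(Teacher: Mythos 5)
There is a genuine gap, and it lies exactly in the route you chose to take. Your reduction (a) claims that, after refining the partition so each subinterval sits in one chart, the lemma ``reduces --- via Lemma \ref{lem.RDEloc} --- to the statement that a path in $B_1\subset\R^p$ which solves the Euclidean RDE on $[a,c]$ and on $[c,b]$ solves it on $[a,b]$.'' But Lemma \ref{lem.RDEloc} converts the ${\mathcal M}$-RDE into a single Euclidean RDE only when the path stays in \emph{one} foliated chart on the \emph{whole} interval under consideration, and $(x_t)_{t\in[a,b]}$ will in general pass through several charts. Definition \ref{def.181000} demands the estimate \eqref{eq:190104-1} for \emph{all} $(s,t)\in\triangle_{[a,b]}$, including pairs with $x_s$ and $x_t$ in different charts; for such pairs there is no single Euclidean RDE to which Proposition \ref{pr.18r^d} applies, so its (A)$\Leftrightarrow$(B) equivalence and the uniqueness/flow property give you nothing. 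One can refine the partition so that the union of any two adjacent subintervals lies in one chart, but iterating the concatenation then requires the union of three, four, \dots subintervals to lie in one chart, which fails; and invoking the global solution of Proposition \ref{pr.gl.sol} instead would be circular, since that proposition is proved \emph{using} this lemma. In short, patching across chart changes is precisely the content of the lemma and cannot be outsourced to the Euclidean concatenation statement.

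The workable argument is the one you sketch as route (b), but it needs the ingredient you only gesture at. The paper proves, on each chart subinterval (where everything does reduce to the computation in the proof of Proposition \ref{pr.18r^d}), not only \eqref{eq:190104-1} but also the two lower-order estimates: $f(x_t)-f(x_s)=\sum_i V_if(x_s){\bf w}^{1,i}_{s,t}+O(|t-s|^{2\alpha})$ for $f\in C^2_L$ and $f(x_t)-f(x_s)=O(|t-s|^{\alpha})$ for $f\in C^1_L$, and then propagates all three across the junction by induction on the partition using Chen's identity. It is the $2\alpha$-level estimate, applied to $V_if$, that expands $V_if(x_u)-V_if(x_s)$ to first order and produces the term $\sum_{j,i}V_jV_if(x_s){\bf w}^{1,j}_{s,u}{\bf w}^{1,i}_{u,t}$ which cancels the Chen cross term $-\sum_{j,k}V_jV_kf(x_s){\bf w}^{1,j}_{s,u}{\bf w}^{1,k}_{u,t}$; the shuffle relation \eqref{eq.shfl} plays no role at the junction (it concerns products over the \emph{same} interval, not ${\bf w}^{1,j}_{s,c}{\bf w}^{1,k}_{c,t}$), and the drift cross term does not ``cancel exactly'' --- it equals $\{V_0f(x_u)-V_0f(x_s)\}(t-u)$ and is controlled by the $\alpha$-level estimate. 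So your decomposition of $E_{s,t}$ is the right starting point, but to close it you must formulate and prove the auxiliary $\alpha$- and $2\alpha$-estimates on each subinterval and run the induction; as written, neither route in the proposal establishes \eqref{eq:190104-1} for $s<c<t$.
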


\begin{proof}
  We will show the following three inequalities
  on every subinterval of $[a, b]$:
  Inequality
  \eqref{eq:190104-1} for $f \in C^3_L ({\mathcal M})$ and
  \begin{align}
    f(x_{t})-f(x_{s}) & =\sum_{i=1}^{d}V_{i}f(x_{s}){\bf w}^{1, i}_{s, t}
    +O(|t-s|^{2\alpha})
                      & \mbox{for $f \in C^2_L ({\mathcal M})$,}
    \label{eq:190104-2}
    \\
    f(x_{t})-f(x_{s}) & =O(|t-s|^{\alpha})
                      & \mbox{for $f \in C^1_L ({\mathcal M})$.}
    \label{eq:190104-3}
  \end{align}

  Take a partition $a = t_0 < t_1 < \cdots < t_n =b$
  as in Notation \ref{def.coordinate}.
  We use the notation defined there.
  Moreover, we may assume that
  $t_j =c$ for some $j~(1 \le j \le n-1)$.

  On each $[t_{j-1}, t_j]$,  showing
  \eqref{eq:190104-1}, \eqref{eq:190104-2} and
  \eqref{eq:190104-3}
  reduces to the same problem for the corresponding RDE on $\R^p$
  due to Lemma \ref{lem.RDEloc},
  but it was already
  done in the proof of Proposition \ref{pr.18r^d}.
  (See \eqref{eq.1019-2} and \eqref{eq.1019-3}.)

  Now, we prove by mathematical induction that
  Inequalities \eqref{eq:190104-1}--\eqref{eq:190104-3}
  hold on $[a, t_j]$ for every $1 \le j \le n$.
  The case $j=1$ has just been verified.
  We assume the case $j-1$ and will verify the case $j$
  by using Chen's identity for ${\bf w}$.
  It is obvious that if \eqref{eq:190104-3} holds on
  both $[a, t_{j-1}]$ and $[t_{j-1}, t_j]$, then it holds on $[a, t_{j}]$, too.
  Using  \eqref{eq:190104-3} and
  ${\bf w}^1_{s, t} = {\bf w}^1_{s, u} + {\bf w}^1_{u, t}$
  with $u = t_{j-1}$, we can show
  a similar property for Inequality \eqref{eq:190104-2}.

  Let $a \le s \le t_{j-1} \le t \le t_j$ and set $u:=t_{j-1}$.
  Since  \eqref{eq:190104-1} holds on
  both $[a, t_{j-1}]$ and $[t_{j-1}, t_j]$, we have
  \begin{align}
    f(x_t) - f(x_s)
     & =
    \sum_{i}\{V_{i}f(x_{u}) -  V_{i}f(x_{s})\} {\bf w}^{1, i}_{u, t}
    +
    \sum_{i} V_{i}f(x_{s})  ({\bf w}^{1, i}_{u, t}+ {\bf w}^{1, i}_{s, u})
    \nonumber \\
     & \quad
    +
    \sum_{j,k}\{V_{j}V_{k}f(x_{u}) -  V_{j}V_{k}f(x_{s})\} {\bf w}^{2, jk}_{u, t}
    \nonumber \\
     & \quad
    +
    \sum_{j,k}V_{j}V_{k}f(x_{s})  ({\bf w}^{2, jk}_{u, t}+ {\bf w}^{2, jk}_{s,u})
    \nonumber
    \\
     & \quad
    +
    V_{0}f(x_{s}) (t-s)  + \{V_{0}f(x_{u}) -  V_{0}f(x_{s})\} (t-u)
    + O (|t-s|^{3\alpha}).
    \nonumber
  \end{align}
  The third and the sixth
  terms are $O (|t-s|^{3\alpha})$ due to \eqref{eq:190104-3}.
  By Chen's identity,
  the second term equals $\sum_{i} V_{i}f(x_{s})  {\bf w}^{1, i}_{s, t}$
  and
  the fourth term equals
  $\sum_{j,k}V_{j}V_{k}f(x_{s})
    ({\bf w}^{2, jk}_{s, t}- {\bf w}^{1, j}_{s, u}{\bf w}^{1, k}_{u, t})$.
  Due to \eqref{eq:190104-2}, the first term equals
  $\sum_{j,i}V_{j}V_{i}f(x_{s})
    {\bf w}^{1, j}_{s, u}{\bf w}^{1, i}_{u, t} +O (|t-s|^{3\alpha})$.
  Thus, we have verified \eqref{eq:190104-1} on $[a, t_{j}]$.
  This completes the proof.
\end{proof}

\begin{proposition}\label{pr.gl.sol}
  Let ${\bf w}\in
    G\Omega_{\alpha}([0, T], \R^{d})$ and
  $[a,b]\subset [0,T]$.
  Assume that $V_i~(0 \le i \le d)$ are leafwise $C^3$.
  Then, for every $m \in M$,
  there exists a unique solution $(x_t)_{t \in [a,b]}$ to RDE \eqref{RDE2}
  with $x_a =m$.
  Moreover, if ${\bf w}$ is the natural lift of
  $w \in C^{1-{\rm Hld}}_0 ([0,T], {\mathbb R}^d)$,
  then the unique solution to RDE \eqref{RDE2}
  coincides with that of ODE in the Riemann-Stieltjes sense.
\end{proposition}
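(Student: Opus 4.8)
The strategy is to localise everything to foliated coordinate charts, import the Euclidean RDE theory already recorded (Proposition \ref{pr.cont_LI}, Proposition \ref{pr.18r^d}, Definition \ref{def.RDEdom}) through Lemmas \ref{lem.121201}, \ref{lem.RDEloc} and \ref{lm.prolong}, and then run a continuation argument on $[a,b]$. First I would fix, using compactness of $\mathcal M$, a finite regular foliated atlas $\{(U_\alpha,\varphi_\alpha)\}$ with $U_\alpha\simeq B_{\alpha,1}\times B_{\alpha,2}$, together with cutoffs $\chi_\alpha$ as in Notation \ref{def.coordinate}. Since the $V_i$ are leafwise $C^3$ and $\overline{B_{\alpha,1}}\times\overline{B_{\alpha,2}}$ is compact, the localised vector fields $\tilde U^{\alpha}_i(\,\cdot\,;z)=\chi_\alpha(\,\cdot\,)\tilde V_i(\,\cdot\,,z)$ are $C^3_b$ on $\R^p$ with a uniform bound $\sup_{\alpha,z}\|\tilde U^{\alpha}_i(\,\cdot\,;z)\|_{C^3_b}\le K_0$. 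Fixing $K$ that also bounds $\max_{i=1,2}\|{\bf w}^i\|_{i\alpha}^{1/i}$, Proposition \ref{pr.cont_LI} then supplies one constant $C_K$ bounding the $\alpha$-H\"older seminorm of every localised solution, uniformly over charts and over the frozen transversal coordinate.

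Next, local existence and uniqueness at a point $m'$. Choose $\alpha$ with $m'\in U_\alpha$ and write $\varphi_\alpha(m')=(\eta,\zeta)$. By Definition \ref{def.RDEdom} and Proposition \ref{pr.cont_LI}, the frozen Euclidean RDE $dy_t=\sum_i V_i(y_t,\zeta)\,dw^i_t+V_0(y_t,\zeta)\,dt$ with initial value $\eta$ has a unique global solution $y$, of $\alpha$-H\"older seminorm $\le C_K$; as long as $y_t$ stays in the set $O_\alpha$ where $\chi_\alpha\equiv 1$, Lemma \ref{lem.RDEloc} shows $t\mapsto\varphi_\alpha^{-1}(y_t,\zeta)$ solves RDE \eqref{RDE2}. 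Because $y$ moves by at most $C_K$ times the $\alpha$-th power of the elapsed time, it remains in $O_\alpha$ for an amount of time depending only on $C_K$ and on the distance from $\eta$ to the boundary of $O_\alpha$; this gives a solution on a short interval. Conversely, by Lemma \ref{lem.121201} any solution through $m'$ stays in one plaque, hence has a constant transversal coordinate, and by Lemma \ref{lem.RDEloc} its $y$-part solves the same frozen cutoff RDE, so on a short interval it must coincide with $\varphi_\alpha^{-1}(y_t,\zeta)$; this is local uniqueness. Global uniqueness on $[a,b]$ then follows by the usual ``largest time of agreement'' argument: for two solutions with the same initial value, the set of $t$ on which they agree throughout $[a,t]$ is closed, nonempty, and --- by local uniqueness applied at their common value --- cannot have a right endpoint $<b$.

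For global existence, let $I$ be the set of $t\in[a,b]$ such that RDE \eqref{RDE2} on $[a,t]$ has a solution with $x_a=m$; local existence gives $a+\varepsilon\in I$ for small $\varepsilon>0$, and by uniqueness the solutions for the various $t\in I$ are restrictions of one another, so there is a well-defined solution $x$ on $[a,\tau)$ with $\tau:=\sup I$. The crux is to show $\tau\in I$. By compactness of $\mathcal M$, choose $s_n\uparrow\tau$ with $x_{s_n}\to m_*$; take a chart $U_\alpha\ni m_*$ and a neighbourhood $V$ of $m_*$ with $\overline V\subset U_\alpha$ on whose projection $\chi_\alpha\equiv 1$, so that for large $n$ the point $x_{s_n}$ lies in $V$ with a margin $\ge\rho_0>0$ from the boundary of $O_\alpha$, with $\rho_0$ independent of $n$. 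The local solution $\hat x$ with $\hat x_{s_n}=x_{s_n}$ then solves RDE \eqref{RDE2} on $[s_n,s_n+\delta_0]$ with $\delta_0$ depending only on $C_K$ and $\rho_0$. Taking $n$ so that $s_n+\delta_0>\tau$, local uniqueness forces $x=\hat x$ on $[s_n,\tau)$, so $x$ extends continuously to $[s_n,s_n+\delta_0]\supset[s_n,\tau]$ while still solving RDE \eqref{RDE2}; patching this with $x$ on $[a,s_n]$ via Lemma \ref{lm.prolong} gives $\tau\in I$ and, if $\tau<b$, a solution on an interval strictly longer than $[a,\tau]$, contradicting maximality. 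Hence $\tau=b$ and a unique solution on $[a,b]$ exists. Finally, when ${\bf w}={\bf L}(w)$ for a Lipschitz $w$, inside each chart the frozen Euclidean RDE solution coincides with the Riemann--Stieltjes ODE solution (as noted just after \eqref{RDE1} and in Proposition \ref{pr.18r^d}); a Riemann--Stieltjes ODE solution on $\mathcal M$ likewise stays in a plaque and in local coordinates is exactly that frozen ODE, so on each piece of a suitable partition the two global solutions agree, hence on all of $[a,b]$.

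The step I expect to be the main obstacle is precisely $\tau\in I$, i.e.\ that the solution does not fail to extend to the closed interval: compactness of $\mathcal M$ rules out ``explosion'', but a priori not wild oscillation of $x_t$ as $t\uparrow\tau$. What closes this gap is the uniform $\alpha$-H\"older control furnished by Lyons' continuity theorem together with the finite regular atlas, which lets one restart a genuine local solution near $\tau$ with a time-step bounded below and then identify it with $x$ by local uniqueness.
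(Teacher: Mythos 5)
Your proof is correct and follows essentially the same route as the paper's: localization via Lemmas \ref{lem.121201} and \ref{lem.RDEloc} to cutoff Euclidean RDEs with frozen transversal coordinate, uniform $\alpha$-H\"older bounds from Proposition \ref{pr.cont_LI} combined with compactness of ${\mathcal M}$ to control the behaviour at the right endpoint, and Lemma \ref{lm.prolong} for patching, with the Riemann--Stieltjes statement handled by localization exactly as in the paper. The only difference is cosmetic: you restart a local solution at the accumulation point and identify it with the existing one by local uniqueness, whereas the paper directly shows the already-constructed solution eventually stays in one plaque of a chart around the accumulation point and extends it there; both versions hinge on the same estimates.
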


\begin{proof}
  For simplicity, we prove the case $[a,b]=[0,T]$ only.
  For every $S \in [0,T)$ and $m \in {\mathcal M}$,
  there exists a unique local solution $(x_t)$ such that
  $x_S =m$.
  We can verify this by
  taking a local chart around $m$ and
  reducing the problem
  to the corresponding one on ${\mathbb R}^p$.
  The uniqueness is easy to see since it is a time-local issue.

  Let us prove the global existence.
  We claim
  that, if $(x_t)$ solves the RDE on $[0, S)$
  (i.e., it solves on $[0, S -\epsilon]$ for every small $\epsilon >0$),
  then $\lim_{t \nearrow S} x_t$ exists
  and $(x_t)_{t \in [0,S]}$ solves the RDE on $[0, S]$.
  (Recall that for an RDE on ${\mathbb R}^p$ with $C^3_b$-coefficients,
  this property is known.)

  Now we give a proof of the above claim.
  Since ${\mathcal M}$ is compact, there exist
  $s_1 < s_2 < \cdots \nearrow S$ such that $\{x_{s_n} \}_{n=1}^\infty$
  converges to some $\hat{m} \in {\mathcal M}$.
  Take a local chart $\phi\colon U \simeq B_1 \times B_2$
  around $\hat{m}$,
  where $B_1$ and $B_2$ are open sets of
  ${\mathbb R}^p$ and ${\mathcal Z}$, respectively.
  We write $\phi (\hat{m})=(\hat{y}, \hat{z})$.
  We also write $\phi (x_t) =(y_t, z_t)$ when $x_t \in U$
  and, in particular, $\phi (x_{s_n}) =(y_{s_n}, z_{s_n})$.

  Let $O$ and $O'$ be bounded
  open subsets of ${\mathbb R}^p$
  such that
  $\hat{y} \in O \subset O'  \subset B_1$.
  Take a smooth, compactly supported function
  $\chi \colon {\mathbb R}^p \to [0,\infty)$ such that
  ${\rm supp}~\chi \subset B_{1}$
  and $\chi \equiv 1$ on $O'$.
  Set $U_{i}^{[n]} ( \,\cdot\,)= \chi ( \,\cdot\,)V_i ( \,\cdot\,, z_{s_n} )$ so that
  $U_{i}^{[n]}$ is a $C^3_b$-vector field on ${\mathbb R}^p$
  whose $C^3_b$-norms are bounded in $n$.
  Consider the RDE on ${\mathbb R}^p$
  associated with $U_{i}^{[n]}$
  ($0 \le i \le d$).
  Then, its solution has a finite $\alpha$-H\"older seminorm
  which is bounded in $n$ by Proposition \ref{pr.cont_LI}.
  Combining this with
  $y_{s_n} \in O$ for sufficiently large $n$,
  we see that
  $(y_t)_{t \in [s_n, s_{n+1}]} \subset O'$ for sufficiently large $n$.
  So,
  $(y_t)_{t \in [s_n, S)} \subset O'$ for some $n$.
  Lemma \ref{lem.121201} then implies
  $z_{s_n} = \hat{z}$ for sufficiently large $n$.
  In other words, $x_t$ stays in $U$ if $t$ is close enough to $S$
  and belongs to one plaque.
  By basic results for RDEs on ${\mathbb R}^p$,
  $\lim_{t \nearrow S} y_t =\hat{y}$ and
  $(y_t)_{t \in [s_n, S]}$ solves the RDE associated with
  $U_{i}^{[n]}$, which is in fact independent of $n$.
  This in turn implies that
  $(x_t)_{t \in [s_n, S]}$ solves the original RDE on ${\mathcal M}$.
  By Lemma \ref{lm.prolong},
  $(x_t)_{t \in [0, S]}$ solves the original RDE, which proves
  the global existence.

  Now we prove the latter half.
  Let ${\bf w}$ be the natural lift of
  $w \in C^{1-{\rm Hld}}_0 ([0,T], {\mathbb R}^d)$.
  For an equation on ${\mathbb R}^p$,
  the solution
  in the rough path sense and the Riemann-Stieltjes sense
  coincide.
  Hence, by localization and Lemma \ref{lm.prolong},
  solutions in these two senses also
  coincide for an RDE on ${\mathcal M}$.
\end{proof}

\begin{definition}
  The unique solution $x$ of RDE \eqref{RDE2}
  in Proposition \ref{pr.gl.sol} above
  will be denoted by
  $x_{t} = \Phi_{[a,b]} (m, {\bf w})_t$.
  When $[a,b]=[0,T]$, it is simply denoted by $\Phi (m, {\bf w})_t$.
  We call $\Phi_{[a,b]}$ and $\Phi$ the Lyons-It\^o map
  associated with RDE \eqref{RDE2}.
  (When the coefficient vector fields in  RDE \eqref{RDE2}
  are replaced by $[V_1, \ldots, V_d; -V_0]$,
  the corresponding
  Lyons-It\^o map is denoted by $\check{\Phi}_{[a,b]}$ and $\check{\Phi}$.)
\end{definition}

\begin{proposition}\label{pr.LI}
  Keep the same assumptions as in Proposition \ref{pr.gl.sol}.
  The Lyons-It\^o map
  \[
    {\mathcal M} \times G\Omega_{\alpha}([0, T], \R^{d})
    \ni (m,  {\bf w}) \mapsto \Phi_{[a,b]} (m, {\bf w})
    \in
    C([a,b], {\mathcal M})
  \]
  is continuous.
\end{proposition}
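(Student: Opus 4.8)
The plan is to establish sequential continuity, which is enough since $\mathcal{M}$, $G\Omega_{\alpha}([0,T],\R^{d})$ and $C([a,b],\mathcal{M})$ are all metric spaces. Fix $(m,{\bf w})$ and a sequence $(m_n,{\bf w}_n)\to(m,{\bf w})$, and put $x=\Phi_{[a,b]}(m,{\bf w})$ and $x^{(n)}=\Phi_{[a,b]}(m_n,{\bf w}_n)$, which exist and are unique by Proposition \ref{pr.gl.sol}. We must show $x^{(n)}\to x$ uniformly on $[a,b]$. First I would fix the geometry along the \emph{limit} path: since $x([a,b])$ is compact, choose a partition $a=t_0<t_1<\cdots<t_N=b$, foliated charts $\phi_j\colon U_j\simeq B_{j,1}\times B_{j,2}$, and cut-offs $\chi_j$ as in Notation \ref{def.coordinate}, so that $x([t_{j-1},t_j])\subset U_j$. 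Write $\phi_j(x_t)=(y^{[j]}_t,z^{[j]}_t)$ on $[t_{j-1},t_j]$; by Lemma \ref{lem.121201}, $z^{[j]}_t\equiv z_j$ is constant, and by Lemma \ref{lem.RDEloc} together with Definition \ref{def.RDEdom} (valid because $\chi_j\equiv 1$ on a neighborhood $O_j$ of ${\rm Image}\,y^{[j]}$), the path $y^{[j]}$ equals on $[t_{j-1},t_j]$ the unique global $\R^p$-solution, driven by ${\bf w}$ with coefficient vector fields $\chi_j V_i(\cdot,z_j)\in C^3_b$ and initial value $y^{[j]}_{t_{j-1}}$; call it $\eta^{[j]}$.

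Then I would run an induction on $j=0,1,\dots,N$ proving: for all $n$ large, $x^{(n)}\to x$ uniformly on $[a,t_j]$, and in particular $x^{(n)}(t_j)\to x(t_j)$. The case $j=0$ is just $m_n\to m$. For the step $j-1\Rightarrow j$: as $x^{(n)}(t_{j-1})\to x(t_{j-1})\in U_j$, for large $n$ we have $x^{(n)}(t_{j-1})\in U_j$ and, applying $\phi_j$, its coordinates converge, $(y^{[j],n}_{t_{j-1}},z^{[j],n}_{t_{j-1}})\to(y^{[j]}_{t_{j-1}},z_j)$. Let $\eta^{[j],n}$ be the unique global $\R^p$-solution on $[t_{j-1},t_j]$ driven by ${\bf w}_n$ with coefficients $\chi_j V_i(\cdot,z^{[j],n}_{t_{j-1}})$ and initial value $y^{[j],n}_{t_{j-1}}$. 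All three input data converge: initial values as above, rough paths by hypothesis, and the $C^3_b$-norms of the coefficients by the uniform-continuity estimate \eqref{eq.190123-1}; moreover the data lie in a fixed bounded set, so the locally Lipschitz estimate of Proposition \ref{pr.cont_LI} (on the subinterval $[t_{j-1},t_j]$) gives $\eta^{[j],n}\to\eta^{[j]}$ uniformly on $[t_{j-1},t_j]$. Since ${\rm Image}\,\eta^{[j]}={\rm Image}\,y^{[j]}$ is a compact subset of the open set $O_j$ where $\chi_j\equiv 1$, for large $n$ the path $\eta^{[j],n}$ stays in $O_j$; hence $\xi^{(n)}_t:=\phi_j^{-1}(\eta^{[j],n}_t,z^{[j],n}_{t_{j-1}})$ is a well-defined $\mathcal{M}$-valued path on $[t_{j-1},t_j]$ that starts at $x^{(n)}(t_{j-1})$, stays in $U_j$, has constant transversal coordinate, and solves RDE \eqref{RDE2} on $\mathcal{M}$ by Lemma \ref{lem.RDEloc} and Definition \ref{def.RDEdom} (again $\chi_j\equiv1$ near ${\rm Image}\,\eta^{[j],n}$). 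By uniqueness (Proposition \ref{pr.gl.sol}) and the restriction property of solutions, $x^{(n)}=\xi^{(n)}$ on $[t_{j-1},t_j]$. Finally $\eta^{[j],n}\to\eta^{[j]}=y^{[j]}$ uniformly and $z^{[j],n}_{t_{j-1}}\to z_j$, so in the product metric of $\overline{U_j}$ we get $x^{(n)}\to x$ uniformly on $[t_{j-1},t_j]$, hence on $[a,t_j]$; evaluating at $t_j$ closes the induction. After $N$ steps, $x^{(n)}\to x$ uniformly on $[a,b]$.

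The only substantive point — and the expected main obstacle — is the passage between the $\R^p$-theory and the foliated-space theory inside a chart: one must guarantee that the perturbed solutions do not leave the chart and that the locally defined $\R^p$-RDEs have coefficients converging in the $C^3_b$-norm demanded by Lyons' continuity theorem. The first is handled by combining the uniform convergence $\eta^{[j],n}\to\eta^{[j]}$ from Proposition \ref{pr.cont_LI} with the fact that ${\rm Image}\,\eta^{[j]}$ sits in the open region $\{\chi_j=1\}$; the second is precisely what \eqref{eq.190123-1} supplies, upgrading continuity of the $V_i$ in the transversal variable to $C^3_b$-convergence. The remaining work is routine bookkeeping with the finite chart cover, the leaf-confinement Lemma \ref{lem.121201}, and the coordinate reduction Lemma \ref{lem.RDEloc}.
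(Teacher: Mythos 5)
Your proof is correct and follows essentially the same route as the paper's: a partition of $[a,b]$ along the limit solution with charts and cut-offs as in Notation \ref{def.coordinate}, reduction to Euclidean RDEs via Lemma \ref{lem.RDEloc}, the $C^3_b$-convergence of the frozen-transversal coefficients from \eqref{eq.190123-1} fed into Proposition \ref{pr.cont_LI}, confinement of the perturbed paths in $O_j$, and an interval-by-interval induction. The only differences are cosmetic (sequential continuity instead of ``close enough'' phrasing, and a more explicit appeal to uniqueness to identify $x^{(n)}$ with the pulled-back local solution, which the paper does implicitly).
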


\begin{proof}
  Take ${\bf w}$ and $m$ arbitrarily and fix them.
  For $x = \Phi (m, {\bf w})$, let $a = t_0 < t_1 < \cdots < t_n =b$
  be a partition as in Notation \ref{def.coordinate}.

  For $z^{[j]} \in B_{j,2}$,
  the Lyons-It\^o map associated with the coefficient
  $U_{i} (\,\cdot\,; z^{[j]} )$ on $[t_{j-1}, t_j]$
  is denoted by
  $\Psi_{[t_{j-1}, t_j]} (y^{[j]}, {\bf w} ; z^{[j]})_t$,
  where
  ${\bf w}$ is the driving rough path and
  $y^{[j]} \in {\mathbb R}^p$ is the starting point at time $t_{j-1}$.
  By \eqref{eq.190123-1} and Lyons' continuity theorem
  (Proposition \ref{pr.cont_LI}),
  $$
    (y^{[j]}, {\bf w} ; z^{[j]})
    \mapsto
    \Psi_{[t_{j-1}, t_j]} ( y^{[j]}, {\bf w} ; z^{[j]})
    \in
    C([t_{j-1}, t_j], {\mathbb R}^p)
  $$
  is continuous.

  If $\tilde{\bf w}$ and $\tilde{m}$ are close enough to
  ${\bf w}$ and $m$, respectively,
  then
  \[
    \sup_{t_0 \le t \le t_1}
    \bigl|
    \Psi_{[t_{0}, t_1]} (y^{[1]}_{t_0}, {\bf w} ; z^{[1]}_{t_0})_t
    -
    \Psi_{[t_{0}, t_1]} (\tilde{y}^{[1]}_{t_0}, \tilde{\bf w} ; \tilde{z}^{[1]}_{t_0})_t
    \bigr|
  \]
  is small enough.
  In particular, $(\tilde{y}_t)_{t \in [t_0, t_1]}$ also stays in $O_1$.
  This implies that
  \[
    \phi_1 (x_t) = \bigl(
    \Psi_{[t_{0}, t_1]} (y^{[1]}_{t_0}, {\bf w} ; z^{[1]}_{t_0})_t,
    z^{[1]}_{t_0}
    \bigr),
    \quad
    \phi_1 (\tilde{x}_t) = \bigl(
    \Psi_{[t_{0}, t_1]} (\tilde{y}^{[1]}_{t_0}, \tilde{\bf w} ; \tilde{z}^{[1]}_{t_0})_t,
    \tilde{z}^{[1]}_{t_0}
    \bigr),
    \quad t \in [t_0,t_1]
  \]
  and
  \[
    \sup_{t_0 \le t \le t_1} {\rm d}_{{\mathcal M}}
    (
    x_t, \tilde{x}_t
    )
  \]
  is small enough, too.
  In particular,
  $y^{[2]}_{t_1}$ and $\tilde{y}^{[2]}_{t_1}$
  are close enough and so are
  $z^{[2]}_{t_1}$ and $\tilde{z}^{[2]}_{t_1}$.

  Next, we work on  the second interval $[t_1, t_2]$.
  Since $x_{t_1}$ and $\tilde{x}_{t_1}$ are already given,
  we consider
  $\Psi_{[t_{1}, t_2]} (y^{[2]}_{t_1},{\bf w} ; z^{[2]}_{t_1})_t$,
  where $\phi_2(x_{t_1} )=(y^{[2]}_{t_1}, z^{[2]}_{t_1} )$,
  and its corresponding object with ``tilde."
  By the same argument as above,
  if $\tilde{\bf w}$ and $\tilde{m}$ are close enough to
  ${\bf w}$ and $m$, respectively,
  then all arguments of $\Psi_{[t_{1}, t_2]}$ are close enough,
  which in turn implies that
  \[
    \sup_{t_1 \le t \le t_2}
    \bigl|
    \Psi_{[t_{1}, t_2]} (y^{[2]}_{t_1}, {\bf w} ; z^{[2]}_{t_1})_t
    -
    \Psi_{[t_{1}, t_2]} (\tilde{y}^{[2]}_{t_1}, \tilde{\bf w} ; \tilde{z}^{[2]}_{t_1})_t
    \bigr|
  \]
  is small enough.
  Hence, by the same argument we see that
  $(\tilde{y}_t)_{t \in [t_1, t_2]}$ stays in $O_2$ and
  that
  $\sup_{t_0 \le t \le t_2} {\rm d}_{{\mathcal M}}
    \bigl(
    x_t, \tilde{x}_t
    \bigr)
  $
  is small enough.

  Repeating this argument $n$-times, we complete the proof.
\end{proof}

Now we prove that the solution of RDE defines a flow
of
leaf preserving
homeomorphisms on ${\mathcal M}$.

\begin{proposition}\label{pr.homeo}
  Assume that $V_i~(0 \le i \le d)$ are leafwise $C^3$.
  For every $S \in [0,T]$ and ${\bf w}\in
    G\Omega_{\alpha}([0, T], \R^{d})$, the map
  $m \mapsto \Phi (m, {\bf w})_S$ is
  a leaf preserving
  homeomorphism on ${\mathcal M}$
  and
  its inverse is $m \mapsto \check{\Phi} (m, {\mathcal R}_S{\bf w})_S$.
  Moreover, the following map is continuous:
  \[
    G\Omega_{\alpha}([0, T], \R^{d})  \ni
    {\bf w} \mapsto
    \bigl[ t \in [0,T] \mapsto
      \Phi (\bullet, {\bf w})_t \bigr]
    \in
    C([0,T], {\rm Homeo}_L ({\mathcal M}) ).
  \]
\end{proposition}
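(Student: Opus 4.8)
The plan is to reduce everything to facts about RDEs on $\R^p$ that have already been collected, using the localization machinery of Lemma \ref{lem.RDEloc} and Lemma \ref{lm.prolong}. First I would establish that $m \mapsto \Phi(m,{\bf w})_S$ is leaf preserving: this is immediate from Lemma \ref{lem.121201}, since a solution starting at $m$ stays in ${\mathcal L}_m$, so $\Phi(m,{\bf w})_S \in {\mathcal L}_m$. Next, to show the map is a bijection with the claimed inverse, I would argue pathwise in one leaf at a time. Fix ${\bf w}$ and $m$, and let $x_t = \Phi(m,{\bf w})_t$ on $[0,S]$. Using a partition $0 = t_0 < t_1 < \cdots < t_n = S$ as in Notation \ref{def.coordinate}, on each subinterval $[t_{j-1},t_j]$ the path $x$ lives in a chart $U_j$, the transverse coordinate $z^{[j]}$ is constant by Lemma \ref{lem.121201}, and $y^{[j]}$ solves an honest $C^3_b$-RDE on $\R^p$ with coefficients $U_i(\,\cdot\,;z^{[j]})$. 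For such RDEs it is recalled in the excerpt (following \cite[Section 8.9]{FH}, \cite[Section 11.2]{FV}) that $\xi \mapsto \Psi(\xi,{\bf w},{\bf V})_S$ is a homeomorphism of $\R^p$ with inverse $\xi \mapsto \Psi(\xi,{\mathcal R}_S{\bf w},\check{\bf V})_S$, and that the time-reversed equation is driven by ${\mathcal R}_S{\bf w}$. I would check that ${\mathcal R}_S$ restricted to the subinterval structure is compatible with concatenation (reversing $[0,S]$ swaps the order of the pieces), so that running $\check{\Phi}(\,\cdot\,,{\mathcal R}_S{\bf w})$ from $x_S$ retraces the plaques of $x$ in reverse order and lands back at $m$. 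This gives $\check{\Phi}({\Phi}(m,{\bf w})_S,{\mathcal R}_S{\bf w})_S = m$ for every $m$; the symmetric identity follows since ${\mathcal R}_S$ is an involution and $\check{\check{\bf V}} = {\bf V}$. Hence $m \mapsto \Phi(m,{\bf w})_S$ is a bijection with the stated inverse.

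Continuity of $m \mapsto \Phi(m,{\bf w})_S$ (and of its inverse, which has the same form with ${\bf w}$ replaced by ${\mathcal R}_S{\bf w}$) is a special case of Proposition \ref{pr.LI} with the ${\bf w}$-variable frozen, so $m \mapsto \Phi(m,{\bf w})_S \in {\rm Homeo}_L({\mathcal M})$. For the final continuity claim, I would show that the map ${\bf w} \mapsto [\,t \mapsto \Phi(\bullet,{\bf w})_t\,]$ is continuous into $C([0,T],{\rm Homeo}_L({\mathcal M}))$. By the definition of the metrics on ${\rm Homeo}_L({\mathcal M})$ and on $C([0,T],{\rm Homeo}_L({\mathcal M}))$, it suffices to prove that if ${\bf w}^{(k)} \to {\bf w}$ in $G\Omega_\alpha$, then
\[
  \sup_{t \in [0,T]} \sup_{m \in {\mathcal M}} {\rm d}_{\mathcal M}\bigl(\Phi(m,{\bf w}^{(k)})_t, \Phi(m,{\bf w})_t\bigr) \to 0
\]
and the analogous statement for the inverses, i.e. with $\Phi$ replaced by $\check{\Phi}(\,\cdot\,,{\mathcal R}_t{\bf w}^{(k)})$. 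Since ${\mathcal R}_t$ preserves the $\alpha$-H\"older rough path norm and ${\bf w}^{(k)} \to {\bf w}$ implies ${\mathcal R}_T{\bf w}^{(k)} \to {\mathcal R}_T{\bf w}$ (as noted in the excerpt), the inverse statement has exactly the same structure as the forward one, so it is enough to treat $\Phi$.

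To get the forward uniform estimate I would upgrade the proof of Proposition \ref{pr.LI} to a compactness argument. Fix ${\bf w}$; by the local-finiteness of a regular foliated atlas and compactness of ${\mathcal M}$, one can choose, for each $m$, a partition and charts as in Notation \ref{def.coordinate} that work not just for $m$ but for all starting points in a neighborhood of $m$ and all driving paths $\tilde{\bf w}$ near ${\bf w}$ and all $t$ in the relevant subinterval, because the relevant $\R^p$-RDE bounds from Proposition \ref{pr.cont_LI} are uniform over a ${\rm d}_\alpha$-bounded set of rough paths and over the compact family of transverse parameters via \eqref{eq.190123-1}. Covering ${\mathcal M}$ by finitely many such neighborhoods and iterating the interval-by-interval comparison argument of Proposition \ref{pr.LI} $n$ times on each, with all constants uniform, yields the displayed supremum bound. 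I expect \emph{this uniformity} to be the main obstacle: one must verify that the number of subintervals $n$, the charts, the cutoff functions $\chi_j$, and the $\alpha$-H\"older a priori bounds on the $\R^p$-solutions can all be chosen locally uniformly in $(m,\tilde{\bf w})$ and globally over the compact ${\mathcal M}$, so that the finitely many local estimates patch together into a single estimate valid on $[0,T]\times{\mathcal M}$. Everything else is either a direct citation (the $\R^p$ homeomorphism property, Lyons continuity) or a bookkeeping argument about concatenation and time reversal of plaque-by-plaque solutions.
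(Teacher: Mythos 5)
Your first half reproduces the paper's own strategy: leaf preservation from Lemma \ref{lem.121201}, the decomposition of $\Phi(\bullet,{\bf w})_S$ into a composition of local flows along a partition as in Notation \ref{def.coordinate}, time reversal of each local $\R^p$-flow, and then the identity $\check{\Phi}(\Phi(m,{\bf w})_S,{\mathcal R}_S{\bf w})_S=m$ (your involution trick for the symmetric identity is a harmless shortcut for the paper's ``in a similar way''). The ${\bf w}$-continuity via a compactness/uniformity upgrade of the argument of Proposition \ref{pr.LI} is also the paper's route, and your identification of the uniformity of charts, partitions and H\"older bounds as the main point to check is accurate.

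There is, however, a genuine gap: you never prove that, for fixed ${\bf w}$, the map $t\mapsto\Phi(\bullet,{\bf w})_t$ is continuous from $[0,T]$ into ${\rm Homeo}_L({\mathcal M})$, i.e.\ that $[t\mapsto\Phi(\bullet,{\bf w})_t]$ is actually an element of the target space $C([0,T],{\rm Homeo}_L({\mathcal M}))$; your argument only gives membership of $\Phi(\bullet,{\bf w})_S$ in ${\rm Homeo}_L({\mathcal M})$ for each fixed $S$, plus the convergence of the sup distances as ${\bf w}^{(k)}\to{\bf w}$. The forward half of the missing step, $\sup_{m}{\rm d}_{{\mathcal M}}(\Phi(m,{\bf w})_t,\Phi(m,{\bf w})_s)\to0$ as $t\to s$, can indeed be extracted from the uniform local H\"older bounds you invoke together with compactness of ${\mathcal M}$, but the inverse half is exactly where your ``same structure as the forward case'' shortcut breaks down: $\Phi(\bullet,{\bf w})_t^{-1}=\check{\Phi}(\bullet,{\mathcal R}_t{\bf w})_t$ is driven by a rough path that itself varies with $t$, and $t\mapsto{\mathcal R}_t{\bf w}$ is not obviously continuous in the $\alpha$-H\"older rough path metric (one would have to exploit geometricity and compare rough paths on varying intervals). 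The paper circumvents precisely this with the identity \eqref{eq.180110-3}, writing $\Phi(\bullet,{\bf w})_t^{-1}$ as $\check{\Phi}(\bullet,{\mathcal R}_s{\bf w})_s$ composed with a short-time flow driven by ${\mathcal R}_t{\bf w}$ or $\Sigma_t{\bf w}$, so that only the fixed driver ${\mathcal R}_s{\bf w}$ enters the outer map while the inner map is uniformly close to the identity as $t\to s$ (by the a priori H\"older bound and Lipschitz dependence on the initial point, uniform in the transverse parameter). The same device is what the paper uses for the inverse part of the ${\bf w}$-convergence \eqref{eq.190110-2}; there your reduction is closer to being correct, since ${\rm d}_{\alpha}({\mathcal R}_t{\bf w}^{(k)},{\mathcal R}_t{\bf w})\le{\rm d}_{\alpha}({\bf w}^{(k)},{\bf w})$ uniformly in $t$, but the chart-by-chart bookkeeping still has to be made uniform over the $t$-dependent family of base solutions. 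To close the proof you must supply the $t$-continuity of the flow and, above all, of the inverse flow, e.g.\ via the decomposition \eqref{eq.180110-3}.
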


\begin{proof}
  First, we show that
  $\Phi(\bullet, {\bf w})_{S}\in {\rm Homeo}_{L}(\mathcal{M})$ and $\Phi(\bullet, {\bf w})_{S}^{-1}=\check{\Phi}(\bullet, {\mathcal R}_S {\bf w})_{S}$  for any ${\bf w}\in G\Omega_{\alpha}([0, T], \R^{d})$ and $S\in(0, T]$.
  Take ${\bf w}\in G\Omega_{\alpha}([0, T], \R^{d})$, $S\in(0, T]$. By virtue of Lemma \ref{lem.121201} and Proposition \ref{pr.LI}, we have $\Phi(\bullet, {\bf w})_{S}\in C_{L}(\mathcal{M}, \mathcal{M})$ and
  $\check{\Phi}(\bullet, {\mathcal R}_S {\bf w})_{S}\in C_{L}(\mathcal{M}, \mathcal{M})$. Hence it remains to show that $\Phi(\bullet, {\bf w})_{S}^{-1}=\check{\Phi}(\bullet, {\mathcal R}_S {\bf w})_{S}$. Let $m\in \mathcal{M}$ and $0=t_{0}<t_{1}<\cdots<t_{n}=T$ a partition as in
  Notation \ref{def.coordinate} for $x=\Phi(m, {\bf w})$. Moreover we may assume that
  $t_{\ell}=S$ for some $1\le \ell\le n$.
  From Proposition \ref{pr.LI}, we see that there exists a neighborhood
  $\mathcal{O}_{m}$ of $m$ such that for any $\tilde{m}\in \mathcal{O}_{m}$ and $1\le j\le \ell$, $(\tilde{x}_{t}=\Phi(\tilde{m}, {\bf w})_{t})_{t\in[t_{j-1}, t_{j}]} \subset U_{j}$
  and $(\tilde{y}_{t}^{[j]})_{t\in [t_{j-1}, t_{j}]}\subset O_{j}$, where
  \[
    \phi_{j}(\tilde{x}_{t})=(\Psi_{[t_{j-1}, t_{j}]}(\tilde{y}_{t_{j-1}}^{[j]}, {\bf w}; \tilde{z}_{t_{j-1}}^{[j]})_{t}, \tilde{z}_{t_{j-1}}^{[j]}), \quad t\in [t_{j-1}, t_{j}].
  \]
  Then, the map $\mathcal{O}_{m}\ni \tilde{m}\mapsto \Phi(\tilde{m}, {\bf w})_{S}$ can be represented as
  \[
    \Phi(\bullet, {\bf w})_{S}
    =	\Phi_{[t_{\ell-1}, t_{\ell}]}(\bullet, {\bf w})_{t_{\ell}}\circ \Phi_{[t_{\ell-2}, t_{\ell-1}]}(\bullet, {\bf w})_{t_{k-1}}\circ \cdots \circ\Phi_{[t_{0}, t_{1}]}(\bullet, {\bf w})_{t_{1}}
  \]
  and each $\Phi_{[t_{j-1}, t_{j}]}(\bullet, {\bf w})_{t_{j}}$ has a local representation of the form
  \[
    (y^{[j]}, z^{[j]})\mapsto (\Psi_{[t_{j-1}, t_{j}]}(y^{[j]}, {\bf w}; z^{[j]})_{t_{j}}, z^{[j]})
  \]
  at $\Phi(m, {\bf w})_{t_{j-1}}\in U_{j}$. The local representation above has the inverse
  \begin{equation}\label{eq.loc inverse}
    (y^{[j]}, z^{[j]})\mapsto (\check{\Psi}_{[S-t_{j}, S-t_{j-1}]}(y^{[j]}, \mathcal{R}_{S}{\bf w}; z^{[j]})_{S-t_{j-1}}, z^{[j]}),
  \end{equation}
  where $\check{\Psi}_{[a, b]}$ is the Lyons-It\^o map on $[a, b]$ associated with the coefficients
  \[
    [U_1(\,\cdot\,; z^{[j]}), \ldots, U_d(\,\cdot\,; z^{[j]}); {-U_0(\,\cdot\,;z^{[j]})}].
  \]
  On the other hand, from Lemma \ref{lem.RDEloc},
  the map \eqref{eq.loc inverse} coincides with a
  local representation of $\check{\Phi}_{[S-t_{j}, S-t_{j-1}]}(\bullet, \mathcal{R}_{S}{\bf w})_{S-t_{j-1}}$ at
  $\Phi(m, {\bf w})_{t_{j}}\in U_{j}$. Combining this and the representation
  \[
    \begin{aligned}
      \check{\Phi}(\bullet, \mathcal{R}_{S}{\bf w})_{S}
       & =	\check{\Phi}_{[S-t_{1}, S-t_{0}]}(\bullet, \mathcal{R}_{S}{\bf w})_{S}\circ \check{\Phi}_{[S-t_{2}, S-t_{1}]}(\bullet, \mathcal{R}_{S}{\bf w})_{S-t_{1}}\circ \cdots \\
       & \hspace{1em}\cdots \circ\check{\Phi}_{[S-t_{\ell}, S-t_{\ell-1}]}(\bullet, \mathcal{R}_{S}{\bf w})_{S-t_{\ell-1}}
    \end{aligned}
  \]
  on a neighborhood of $\Phi(m, {\bf w})_{S}$, we conclude that $\check{\Phi}(\Phi(m, {\bf w})_{S}, \mathcal{R}_{S}{\bf w})_{S}=m$.
  In a similar way, we can show that $\Phi(\check{\Phi}(m, \mathcal{R}_{S}{\bf w})_{S}, {\bf w})_{S}=m$ holds and consequently we have $\Phi(\bullet, {\bf w})_{S}^{-1}=\check{\Phi}(\bullet, {\mathcal R}_S {\bf w})_{S}$.

  Now, we show that for each  ${\bf w}$,
  $\bigl[ t  \mapsto
      \Phi ( \bullet, {\bf w})_t \bigr]
    \in
    C([0,T], {\rm Homeo}_L ({\mathcal M}) ).$
  We use the same notation above.
  By Lyons' continuity theorem for an RDE
  with $C^3_b$ coefficients (Proposition \ref{pr.cont_LI}),
  we actually have
  \[
    \sup_{t_{j-1} \le s < t \le t_j}
    |t-s |^{-\alpha}
    \bigl|
    \Psi_{[t_{j-1}, t_j]} ( \tilde{y}^{[j]}_{t_{j-1}}, \tilde{\bf w}
    ; \tilde{z}^{[j]}_{t_{j-1}})_t
    -
    \Psi_{[t_{j-1}, t_j]} (\tilde{y}^{[j]}_{t_{j-1}}, \tilde{\bf w},
    ; \tilde{z}^{[j]}_{t_{j-1}})_s
    \bigr|
    \le
    C
  \]
  for some positive constant $C$ independent of
  $\tilde{\bf w}, \tilde{y}^{[j]}_{t_{j-1}},\tilde{z}^{[j]}_{t_{j-1}}$
  if they
  are sufficiently close to
  ${\bf w}, y^{[j]}_{t_{j-1}}, z^{[j]}_{t_{j-1}}$, respectively.
  (Here, ${\bf w}, y^{[j]}_{t_{j-1}}, z^{[j]}_{t_{j-1}}$ are  fixed.)
  For every $s$, we will let $t \to s$.
  In a similar way as in the proof of Proposition \ref{pr.LI},
  we can see from this the following:
  Around every $m \in {\mathcal M}$,
  we can find a neighborhood ${\mathcal O}^{\prime}_m$
  (which may depend only on $m$ and ${\bf w}$)
  such that
  \[
    \sup_{\tilde{m} \in {\mathcal O}_m^{\prime}}
    {\rm d}_{{\mathcal M}}  ( \Phi ( \tilde{m}, {\bf w})_t,
    \Phi (\tilde{m}, {\bf w}))_s
    \to 0
    \quad
    \mbox{as $t \to s$.}
  \]
  Since ${\mathcal M}$ is compact, we see
  the above convergence is actually uniform over $\tilde{m} \in {\mathcal M}$.
  Hence,
  $t \mapsto \Phi (\bullet, {\bf w})_t \in C ({\mathcal M}, {\mathcal M})$
  is continuous.

  We calculate the inverse flow. The inverse of
  $m \mapsto \Phi (m, {\bf w})_t$ is denoted by $\Phi (\bullet, {\bf w})^{-1}_t$.
  Fix any $s$.
  It is easy to see that
  \begin{equation}\label{eq.180110-3}
    \Phi (\bullet, {\bf w})_t^{-1}
    =
    \begin{cases}
      \check{\Phi}(\bullet,  {\mathcal R}_s {\bf w})_s
      \circ  \check{\Phi} (\bullet,  {\mathcal R}_t {\bf w})_{t-s}
       & \mbox{if $s < t$,} \\
      \check{\Phi}( \bullet,  {\mathcal R}_s {\bf w})_s
      \circ  \Phi (\bullet,   \Sigma_t {\bf w})_{s-t}
       & \mbox{if $s > t$.}
    \end{cases}
  \end{equation}
  Here, we set $(\Sigma_t {\bf w})_{u,v} = {\bf w}_{u-t,v-t}$
  for $t \le u \le v$.
  (A similar property to \eqref{eq.180110-3} holds for $\Psi_{[t_{j-1}, t_j]}$, too.)
  Therefore,
  \[
    {\rm d}_{{\mathcal M}} (\Phi (\mu, {\bf w})^{-1}_t,
    \Phi ( \mu, {\bf w})^{-1}_s)
    =
    {\rm d}_{{\mathcal M}}
    (\check{\Phi}( \mu,  {\mathcal R}_s {\bf w})_s,
    \check{\Phi}(\mu',  {\mathcal R}_s {\bf w})_s),
  \]
  where $\mu'=\check{\Phi}(\mu,  {\mathcal R}_t {\bf w})_{t-s}$
  or $\Phi (\mu, \Sigma_t {\bf w})_{s-t}$.
  ($\mu$ is an arbitrary element of ${\mathcal M}$,
  but in spirit we think $\mu = \Phi (m,  {\bf w})_s$.)
  By Proposition \ref{pr.cont_LI} again,
  for every $\mu$ and $s$,
  there exists a neighborhood ${\mathcal O}_\mu$
  such that
  $\sup_{\tilde{\mu}\in \mathcal{O}_{\mu}}{\rm d}_{{\mathcal M}} (\tilde{\mu}, \tilde{\mu}')
    \to 0$ as $t\to s$.
  Combining this with the following inequality
  \[
    \sup_{t_{j-1} \le s \le t_j}
    \bigl|
    \check{\Psi}_{[t_{j-1}, t_j]} (y^{[j]}_{t_{j-1}}, {\bf w} ; z^{[j]}_{t_{j-1}})_s
    -
    \check{\Psi}_{[t_{j-1}, t_j]} ( \tilde{y}^{[j]}_{t_{j-1}}, {\bf w} ; z^{[j]}_{t_{j-1}})_s
    \bigr|
    \le C | y^{[j]}_{t_{j-1}} - \tilde{y}^{[j]}_{t_{j-1}} |
  \]
  for some constant $C>0$
  independent of $z^{[j]}_{t_{j-1}}$
  ($C$ depends on
  ${\bf w}$ only via $\max_{i=1,2} \|{\bf w}^i \|_{i\alpha}^{1/i}$),
  we can show that
  \[
    \sup_{\tilde{\mu} \in {\mathcal O}_\mu}
    {\rm d}_{{\mathcal M}}  ( \Phi (\tilde{\mu}, {\bf w})^{-1}_t,
    \Phi ( \tilde{\mu}, {\bf w})^{-1}_s)
    \to 0
    \quad
    \mbox{as $t \to s$.}
  \]
  This implies
  the continuity of $t \mapsto \Phi (\bullet, {\bf w})^{-1}_t \in
    C({\mathcal M}, {\mathcal M})$.
  Hence,
  $[t \mapsto \Phi (\bullet, {\bf w})_t ] \in
    C([0,T], {\rm Homeo}_L ({\mathcal M}))$.

  The proof of the continuity of
  ${\bf w} \mapsto
    \bigl[ t  \mapsto
      \Phi ( \bullet, {\bf w})_t \bigr]$ is quite similar.
  So, we only give a sketch of proof.
  Let ${\bf w}(l) \to {\bf w}$ as $l \to \infty$.
  We need to show that
  \begin{eqnarray}
    \lefteqn{
    \sup_{0 \le s \le T} \sup_{m \in {\mathcal M}}
    {\rm d}_{{\mathcal M}}  ( \Phi (m, {\bf w}(l))_s,
    \Phi (m, {\bf w})_s)
    }
    \label{eq.190110-2}
    \\
    &\quad +\sup_{0 \le s \le T} \sup_{\mu \in {\mathcal M}}
    {\rm d}_{{\mathcal M}}  ( \Phi (\mu, {\bf w}(l))^{-1}_s,
    \Phi (\mu, {\bf w})^{-1}_s)
    \to 0
    \quad
    \mbox{as $l \to \infty$.}
    \nonumber
  \end{eqnarray}
  To prove the convergence of the first term in
  \eqref{eq.190110-2},
  it is enough to find, for each $m$ and $s$,
  there exist a neighborhood ${\mathcal U}_{s,m} \subset
    [0,T]\times {\mathcal M}$ such that
  \[
    \sup_{(\tilde{s}, \tilde{m}) \in {\mathcal U}_{s,m} }
    {\rm d}_{{\mathcal M}}  ( \Phi (\tilde{m}, {\bf w}(l))_{\tilde{s}},
    \Phi (\tilde{m}, {\bf w})_{\tilde{s}})
    \to 0
    \quad
    \mbox{as $l \to \infty$.}
  \]
  In the same way as before,
  we can prove this by using Proposition \ref{pr.cont_LI}.
  In a similar way, using \eqref{eq.180110-3},
  we can prove the convergence of the second term in
  \eqref{eq.190110-2}.
\end{proof}

Next we prove that the solution of RDE
\eqref{RDE2} defines a flow of leaf preserving diffeomorphisms on
${\mathcal M}$ if the vector fields $V_{i}$\rq{}s have higher regularity.

\begin{proposition}\label{pr.diffeo}
  Assume that $V_{i}\,(0\le i\le d)$ are
  leafwise $C^{k+3}$-vector field $(k\ge 1)$.
  Then,
  for every $S \in [0,T]$ and ${\bf w}\in
    G\Omega_{\alpha}([0, T], \R^{d})$, the map
  $m \mapsto \Phi (m, {\bf w})_S$ is
  a leaf preserving $C_{L}^{k}$-diffeomorphism on $\mathcal{M}$.
  Moreover, the following map is continuous.
  \[
    G\Omega_{\alpha}([0, T], \R^{d})  \ni
    {\bf w} \mapsto
    \bigl[ t \in [0,T] \mapsto
      \Phi (\bullet, {\bf w})_t \bigr]
    \in
    C([0,T], {\rm Diff}_{L}^{k} ({\mathcal M}) ).
  \]
\end{proposition}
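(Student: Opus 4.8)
The plan is to mirror the proof of Proposition \ref{pr.homeo} step by step, upgrading the $C^{0}$ input (continuity of the Euclidean Lyons--It\^o map) to the $C^{k}$ input supplied by Proposition \ref{pr.cont_Jk} (joint local Lipschitz continuity of the derivative flows $J^{(l)}$, $-1\le l\le k$, when the vector fields are $C^{k+3}_{b}$), together with the classical fact recalled just before Proposition \ref{pr.cont_Jk} that $y\mapsto\Psi(y,{\bf w},{\bf V})_{t}$ is then a $C^{k}$-diffeomorphism. Throughout I would use the local reduction of Lemma \ref{lem.RDEloc} and the chart/cutoff data of Notation \ref{def.coordinate}, for which \eqref{eq.190123-1} records exactly the continuity $z\mapsto U_{i}(\,\cdot\,;z)$ into $C^{k+3}_{b}$ that is needed to feed Proposition \ref{pr.cont_Jk}.

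First I would show $\Phi(\bullet,{\bf w})_{S}\in C_{L}^{k}(\mathcal{M},\mathcal{M})$. Fix $m$, put $x=\Phi(m,{\bf w})$, and take a partition $0=t_{0}<t_{1}<\cdots<t_{n}=T$ with $t_{\ell}=S$ as in Notation \ref{def.coordinate}. Exactly as in the proof of Proposition \ref{pr.homeo}, on a neighborhood of $m$ the map $\Phi(\bullet,{\bf w})_{S}$ is the finite composition $\Phi_{[t_{\ell-1},t_{\ell}]}(\bullet,{\bf w})_{t_{\ell}}\circ\cdots\circ\Phi_{[t_{0},t_{1}]}(\bullet,{\bf w})_{t_{1}}$, each factor having local representation $(y^{[j]},z^{[j]})\mapsto(\Psi_{[t_{j-1},t_{j}]}(y^{[j]},{\bf w};z^{[j]})_{t_{j}},z^{[j]})$. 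By the $C^{k+3}_{b}$ flow theory the first component is $C^{k}$ in $y^{[j]}$ with derivatives given by the $J^{(l)}$, and by Proposition \ref{pr.cont_Jk} combined with \eqref{eq.190123-1} these derivatives depend continuously on $z^{[j]}$ as well; hence each factor is a $C_{L}^{k}$-map. Since the chain rule preserves the split form $(f_{1}(y,z),f_{2}(z))$, a finite composition of $C_{L}^{k}$-maps is $C_{L}^{k}$, so $\Phi(\bullet,{\bf w})_{S}\in C_{L}^{k}(\mathcal{M},\mathcal{M})$. Because the inverse of $m\mapsto\Phi(m,{\bf w})_{S}$ is $m\mapsto\check\Phi(m,{\mathcal R}_{S}{\bf w})_{S}$ by Proposition \ref{pr.homeo}, and $\check\Phi$ is the Lyons--It\^o map of an RDE of the same type with vector fields of the same leafwise regularity, the identical argument gives that the inverse is $C_{L}^{k}$ too. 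With leaf preservation coming from Lemma \ref{lem.121201}, this yields $\Phi(\bullet,{\bf w})_{S}\in{\rm Diff}_{L}^{k}(\mathcal{M})$.

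Finally, for the continuity of ${\bf w}\mapsto[t\mapsto\Phi(\bullet,{\bf w})_{t}]$ into $C([0,T],{\rm Diff}_{L}^{k}(\mathcal{M}))$, recall that convergence in ${\rm Diff}_{L}^{k}$ means convergence of the maps and of their inverses in $C_{L}^{k}(\mathcal{M},\mathcal{M})$, which by Proposition \ref{pr.loc convergence} can be checked locally through uniform convergence of all $y$-derivatives of order $\le k$ of local representations. Given ${\bf w}(l)\to{\bf w}$, I would use the same local decomposition: the estimate of Proposition \ref{pr.cont_Jk} controls the $\alpha$-H\"older paths $t\mapsto J^{(l)}(\xi,{\bf w},{\bf V})_{t}$ jointly in $(\xi,{\bf w},{\bf V})$, so the local representations of $\Phi(\bullet,{\bf w}(l))_{t}$ and, via the chain rule, of their finite composition converge together with all $y$-derivatives up to order $k$, uniformly on a neighborhood of $m$ and uniformly in $t\in[0,T]$. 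Compactness of $\mathcal{M}$ upgrades this to uniform convergence over $\mathcal{M}$, i.e.\ convergence in $C([0,T],C_{L}^{k}(\mathcal{M},\mathcal{M}))$; for the inverse flow one repeats the reasoning after inserting the identities \eqref{eq.180110-3} (equivalently, arguing with $\check\Phi(\bullet,{\mathcal R}_{t}{\bf w})_{t}$ and the continuity of ${\mathcal R}$). The main obstacle is entirely bookkeeping: keeping neighborhoods, chart constants and estimates uniform simultaneously in $t$ and in the transverse variable $z$ while differentiating the composition — but every such uniformity is already packaged into Proposition \ref{pr.cont_Jk} and \eqref{eq.190123-1}, so no genuinely new difficulty arises beyond what was handled at the $C^{0}$ level in Proposition \ref{pr.homeo}.
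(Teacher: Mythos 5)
Your proposal is correct and follows essentially the same route as the paper: the same chart-by-chart composition of the local maps $(y^{[j]},z^{[j]})\mapsto(\Psi_{[t_{j-1},t_j]}(y^{[j]},{\bf w};z^{[j]})_{t_j},z^{[j]})$, with Proposition \ref{pr.cont_Jk} together with \eqref{eq.190123-1} supplying the $C^{k}$-regularity and its joint continuity, Proposition \ref{pr.loc convergence} to verify convergence in $C_L^k$, and $\check\Phi$ with \eqref{eq.180110-3} for the inverse flow. The only point treated more explicitly in the paper is the intermediate step that, for fixed ${\bf w}$, $t\mapsto\Phi(\bullet,{\bf w})_t$ is continuous into ${\rm Diff}_L^k(\mathcal{M})$, but the ingredient you cite (the uniform $\alpha$-H\"older-in-time bounds on $\partial_y^\beta\Psi$ from Proposition \ref{pr.cont_Jk}) is exactly what the paper uses for that step.
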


\begin{proof}
  First, we show that
  $\Phi(\bullet, {\bf w})_{S}\in C_{L}^{k}(\mathcal{M}, \mathcal{M})$ for any ${\bf w}\in G\Omega_{\alpha}([0, T], \R^{d})$ and $S\in(0, T]$. Take ${\bf w}\in G\Omega_{\alpha}([0, T], \R^{d})$, $S\in(0, T]$, and $m\in \mathcal{M}$.
  Let $0=t_{0}<t_{1}<\cdots<t_{n}=T$ and $\mathcal{O}_{m}$ be a partition and a neighborhood of $m$ as in the proof of
  Proposition \ref{pr.homeo}.
  The Lyons' continuity theorem for an RDE
  with $C^{k+3}_b$ coefficients (Proposition \ref{pr.cont_Jk}) yields that
  \[
    (y^{[j]}, z^{[j]})\mapsto \Psi_{[t_{j-1}, t_{j}]}(y^{[j]}, {\bf w}; z^{[j]})_{t_{j}}
  \]
  is of class $C_{L}^{k}$.
  Since the map $\mathcal{O}_{m}\ni m\mapsto \Phi(m, {\bf w})_{S}$ can be represented as
  \begin{equation}\label{composition}
    \Phi(\bullet, {\bf w})_{S}
    =	\Phi_{[t_{\ell-1}, t_{\ell}]}(\bullet, {\bf w})_{t_{\ell}}\circ \Phi_{[t_{\ell-2}, t_{\ell-1}]}(\bullet, {\bf w})_{t_{\ell-1}}\circ \cdots \circ\Phi_{[t_{0}, t_{1}]}(\bullet, {\bf w})_{t_{1}},
  \end{equation}
  and each $\Phi_{[t_{j-1}, t_{j}]}(\bullet, {\bf w})_{t_{j}}$ has a local representation of the form
  \[
    (y^{[j]}, z^{[j]})\mapsto (\Psi_{[t_{j-1}, t_{j}]}(y^{[j]}, {\bf w}; z^{[j]})_{t_{j}}, z^{[j]})
  \]
  at $\Phi(m, {\bf w})_{t_{j-1}}\in U_{j}$, we see that $\Phi(\bullet, {\bf w})\in C_{L}^{k}(\mathcal{M}, \mathcal{M})$. It is similarly verified that
  $\Phi(\bullet, {\bf w})_{S}^{-1}=\check{\Phi}(\bullet, {\mathcal R}_S {\bf w})_{S}\in C_{L}^{k}(\mathcal{M}, \mathcal{M})$.
  Hence $\Phi(\bullet, {\bf w})_{S}\in {\rm Diff}_{L}^{k}(\mathcal{M})$.

  Next we show that for each ${\bf w}$, $[t\mapsto \Phi(\bullet, {\bf w})_{t}]\in C([0, T], {\rm Diff}_{L}^{k}(\mathcal{M}))$. We use the same notation as above.
  By Proposition \ref{pr.cont_Jk}, if
  $\tilde{{\bf w}}$, $\tilde{y}_{t_{j-1}}^{[j]}$, $\tilde{z}_{t_{j-1}}^{[j]}$ are sufficiently close to
  ${\bf w}$, $y_{t_{j-1}}^{[j]}$, $z_{t_{j-1}}^{[j]}$,
  we can find a positive constant $C$ independent of $\tilde{{\bf w}}$, $\tilde{y}_{t_{j-1}}^{[j]}$, $\tilde{z}_{t_{j-1}}^{[j]}$ such that
  \[
    \sup_{|\beta|\le k, t_{j-1}\le s<t\le t_{j}}|t-s|^{-\alpha}
    \bigl|
    \partial_{y}^{\beta}\Psi_{[t_{j-1}, t_j]} ( \tilde{y}^{[j]}_{t_{j-1}}, \tilde{\bf w}
    ; \tilde{z}^{[j]}_{t_{j-1}})_t
    -
    \partial_{y}^{\beta}\Psi_{[t_{j-1}, t_j]} (\tilde{y}^{[j]}_{t_{j-1}}, \tilde{\bf w},
    ; \tilde{z}^{[j]}_{t_{j-1}})_s
    \bigr|
    \le
    C.
  \]
  Combining this with the representation (\ref{composition}), we see that there exists a neighborhood $\mathcal{O}^{\prime}_{m}\subset \mathcal{O}_{m}$ of $m$ such that the local representation of
  $\Phi(\bullet, {\bf w})_{t}$ from $\mathcal{O}_{m}^{\prime}$ into  $U_{\ell}$ converges to that of $\Phi(\bullet, {\bf w})_{S}$
  as $t\to S$ in the sense of (\ref{eq.loc convergence})  in Proposition \ref{pr.loc convergence}.
  Hence $\Phi(\bullet, {\bf w})_{t}\to \Phi(\bullet, {\bf w})_{S}$ in $C_{L}^{k}(\mathcal{M}, \mathcal{M})$ as $t\to S$.
  Using the same argument for $\check{\Phi}$ and the equation (\ref{eq.180110-3}), we can also show that
  $\Phi(\bullet, {\bf w})_{t}^{-1}\to \Phi(\bullet, {\bf w})_{S}^{-1}$ in $C_{L}^{k}(\mathcal{M}, \mathcal{M})$ as $t\to S$. The proof of the continuity of $t\in [0, T]\mapsto \Phi(\bullet, {\bf w})_{t}\in {\rm Diff}_{L}^k (\mathcal{M})$ is now complete.

  It remains to prove the continuity of
  ${\bf w}\mapsto [t\in [0, T]\mapsto \Phi(\bullet, {\bf w})_{t}]\in C([0, T], {\rm Diff}_{L}^k(\mathcal{M}))$. Let ${\bf w}\in G\Omega_{\alpha}([0, T], \R^{d})$, $m\in \mathcal{M}$, and ${\bf w}(l)\to {\bf w}$ as $l\to \infty$.
  By virtue of Proposition \ref{pr.LI}, we can take a positive integer $L$, a neighborhood $\mathcal{O}_{m}$ of $m$, and
  a partition $0=t_{0}<t_{1}<\cdots<t_{n}=T$ as in Notation \ref{def.coordinate} such that for any $l\ge L$ and $\tilde{m}\in \mathcal{O}_{m}$,
  $(\tilde{x}_{t}^{l}=\Phi(\tilde{m}, {\bf w}(l))_{t})_{t\in[t_{j-1}, t_{j}]} \subset U_{j}$
  and $(\tilde{y}_{t}^{[j], l})_{t\in [t_{j-1}, t_{j}]}\subset O_{j}$, where
  \[
    \phi(\tilde{x}_{t}^{l})=(\Psi_{[t_{j-1}, t_{j}]}(\tilde{y}_{t_{j-1}}^{[j],l}, {\bf w}(l); \tilde{z}_{t_{j-1}}^{[j], l}), \tilde{z}_{t_{j-1}}^{[j], l}), \quad t\in [t_{j-1}, t_{j}].
  \]
  From Proposition \ref{pr.cont_Jk} again, we have
  \begin{eqnarray*}
    \sup_{|\beta|\le k, t_{j-1}\le t\le t_{j}}
    \bigl|
    \partial_{y}^{\beta}\Psi_{[t_{j-1}, t_{j}]}(\tilde{y}^{[j]}, {\bf w}(l); z^{[j]})_{t}
    -\partial_{y}^{\beta}\Psi_{[t_{j-1}, t_{j}]}(y^{[j]}, {\bf w}; z^{[j]})_{t}
    \bigl|
    \\
    \le
    C(|\tilde{y}^{[j]}-y^{[j]}|+{\rm d}_{\alpha}({\bf w}(l), {\bf w}))
  \end{eqnarray*}
  for some positive constant $C$ independent of $l$, $j$, and $z^{[j]}$. Combining this with the equation (\ref{composition}), we can show that there exists
  a neighborhood $\mathcal{O}_{m}^{\prime}\subset \mathcal{O}_{m}$ of $m$ such that for any $1\le j\le n$, the local representation of $\Phi(\bullet, {\bf w}(l))_{t}$ from $O_{m}^{\prime}$ into $U_{j}$ converges to
  that of $\Phi(\bullet, {\bf w})_{t}$ uniformly in $t\in [t_{j-1}, t_{j}]$ as $l\to \infty$ in the sense of (\ref{eq.loc convergence}) in Proposition \ref{pr.loc convergence}. This implies that ${\bf w}\mapsto [t\in [0, T]\mapsto \Phi(\bullet, {\bf w})_{t}]\in C([0, T], C_{L}^{k}(\mathcal{M}, \mathcal{M}))$ is continuous.
  In a similar way, we can also prove the continuity
  of ${\bf w}\mapsto [t\in [0, T]\mapsto \Phi(\bullet, {\bf w})_{t}^{-1}]\in C([0, T], C_{L}^{k}(\mathcal{M}, \mathcal{M}))$. This completes the proof of the proposition.
\end{proof}

\section{Applications of rough path approach}\label{sec.applications}

We finish with four applications of our rough path approach.
As the first one,
we prove the existence of stochastic flow
associated with the following Stratonovich
SDE on ${\mathcal M}$:
\begin{equation}\label{SDE_M}
  dx_{t}=\sum_{i=1}^d V_i (x_{t}) \circ dw^i_{t} + V_0 (x_{t})dt.
\end{equation}

\begin{proposition}\label{pr.version of SDE}
  Assume that $V_i~(0 \le i \le d)$  are leafwise $C^3$-vector fields
  on ${\mathcal M}$.
  Then, $\Phi (m, {\bf W})$ is the
  stochastic flow of leafwise homeomorphisms
  on ${\mathcal M}$ associated with SDE \eqref{SDE_M}.
\end{proposition}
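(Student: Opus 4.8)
The plan is to check that the $\mathcal{M}$-valued random field $x(t,m,w):=\Phi(m,{\bf W}(w))_t$, defined for every $w\notin\mathcal{N}$ with $\mathcal{N}$ the $\mu$-null set of \eqref{def.nullset}, is the stochastic flow of leafwise homeomorphisms associated with SDE \eqref{SDE_M}. The structural part is essentially free: since ${\bf W}$ is a measurable $G\Omega_\alpha([0,T],\R^d)$-valued map (it is the $\mu$-a.s.\ limit of the measurable maps ${\bf W}(m)$) and $(m,{\bf w})\mapsto\Phi(m,{\bf w})$ is continuous by Proposition \ref{pr.LI}, the composition $(t,m,w)\mapsto x(t,m,w)$ is jointly measurable; and applying Proposition \ref{pr.homeo} with ${\bf w}={\bf W}(w)$ for a fixed $w\notin\mathcal{N}$ shows that $m\mapsto x(t,\bullet,w)$ is a leaf preserving homeomorphism of $\mathcal{M}$ for each $t\in[0,T]$ and that $t\mapsto x(t,\bullet,w)$ is continuous into ${\rm Homeo}_L(\mathcal{M})$. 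Thus the only substantial point is that, for each fixed $m\in\mathcal{M}$, the process $t\mapsto x(t,m,\bullet)$ is $\mu$-a.s.\ a solution of SDE \eqref{SDE_M} with initial condition $m$; equivalently, that $\Phi(m,{\bf W})$ coincides $\mu$-a.s.\ with the strong (Stratonovich) solution $X(\bullet,m,w)$ constructed in \cite{Suz}.

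To prove this I would reproduce the Euclidean identity \eqref{eq.WZ} on $\mathcal{M}$ by localisation, combining Lyons' continuity theorem with a Wong--Zakai approximation. Fix $m$, let $w(k)$ be the dyadic piecewise linear approximation of $w$ used in \eqref{def.nullset}, and set $X^{(k)}:=\Phi(m,{\bf W}(k))$; by the second half of Proposition \ref{pr.gl.sol}, $X^{(k)}$ is the Riemann--Stieltjes (ODE) solution driven by $w(k)$, and by Lemma \ref{lem.121201} it stays in the leaf $\mathcal{L}_m$. On one hand, for $w\notin\mathcal{N}$ we have ${\bf W}(k)\to{\bf W}$ in $G\Omega_\alpha([0,T],\R^d)$ by the definition of $\mathcal{N}$ and ${\bf W}$, so Proposition \ref{pr.LI} gives $X^{(k)}\to\Phi(m,{\bf W})$ in $C([0,T],\mathcal{M})$, $\mu$-a.s.\ along the full sequence. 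On the other hand, $X^{(k)}\to X(\bullet,m,w)$ in probability, uniformly on $[0,T]$: because $X(\bullet,m,w)$ is a leafwise diffusion it stays in $\mathcal{L}_m$, so one can pick a random finite partition $0=t_0<\cdots<t_n=T$ and charts as in Notation \ref{def.coordinate} adapted to its path, and on each subinterval the equation reduces, via Lemmas \ref{lem.121201} and \ref{lem.RDEloc} and the cut-off vector fields $U_i(\,\cdot\,;z)$ of Notation \ref{def.coordinate} (which are $C^3_b$), to a Euclidean Stratonovich SDE with $C^3_b$ coefficients, to which the classical Wong--Zakai theorem applies; a stopping-time argument keeps the $X^{(k)}$ inside the same charts for large $k$. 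Since $\mu$-a.s.\ convergence implies convergence in probability, and limits in probability are $\mu$-a.s.\ unique, the two limits agree, so $\Phi(m,{\bf W})=X(\bullet,m,w)$ $\mu$-a.s. Together with the structural part this proves the proposition; as a by-product $X(\bullet,m,w)$ is now realised as a continuous image of the single random variable ${\bf W}$, which improves the measurability of the strong solution of \cite{Suz}.

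The step I expect to be the main obstacle is the Wong--Zakai localisation on the foliated space: one must patch the Euclidean Wong--Zakai theorem along the random path of $X(\bullet,m,w)$ while controlling, by stopping times, the excursions of the approximations $X^{(k)}$ out of the chosen coordinate charts, and one must verify that nothing is lost in the transversal direction --- here the uniform continuity \eqref{eq.190123-1} of $z\mapsto\tilde U_i(\,\cdot\,;z)$ and Lemma \ref{lem.121201} (a solution never leaves its plaque inside a chart) are the essential inputs. If \cite{Suz} already records a Wong--Zakai, or Stratonovich-consistency, statement for SDEs on $\mathcal{M}$, this step reduces to a citation and the rest of the argument is unchanged.
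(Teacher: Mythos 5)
Your overall strategy is sound, and the structural half (measurability of $w\mapsto{\bf W}$, plus Propositions \ref{pr.LI} and \ref{pr.homeo} applied pathwise) is exactly what the paper uses. Where you genuinely diverge is in the identification of $\Phi(m,{\bf W})$ with Suzaki's solution. The paper's argument is much shorter: fix $m$, note that SDE \eqref{SDE_M} then lives on the single leaf ${\mathcal L}_m$, which is an honest smooth manifold, embed that leaf into a Euclidean space by Whitney's theorem, extend the $V_i$ to the ambient space, and invoke the Euclidean identification \eqref{eq.WZ} once, with deterministic data (the fixed initial point $m$); the $C^3$/non-explosion case is covered by Suzaki's existence result together with the cut-off remark following \eqref{eq.WZ}. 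This removes entirely the step you yourself single out as the main obstacle. Your route instead amounts to proving a Wong--Zakai theorem on ${\mathcal M}$ by patching the Euclidean one along a random chart decomposition of the path of $X(\cdot,m,w)$.

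That deferred step is a genuine gap as written, and it is harder than your sketch suggests. The partition times are random, and on each subinterval the frozen transversal coordinate $z^{[j]}$ and the initial condition $y^{[j]}_{t_{j-1}}$ are random, whereas the classical Wong--Zakai theorem is stated for deterministic coefficients, deterministic intervals and deterministic initial data. To transfer it you need either error estimates uniform in $(\xi,z)$ over compacts combined with a conditioning/strong Markov argument at the random times (and the dyadic grid of $w(k)$ is not aligned with those times, so a small adaptedness repair is also needed), or some joint-continuity argument in the transversal variable, which is delicate because only continuity as in \eqref{eq.190123-1} is available in that direction --- precisely the difficulty the introduction warns makes Kolmogorov-type arguments unavailable. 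None of this is fatal, and Lemma \ref{lem.121201}, Notation \ref{def.coordinate} and Proposition \ref{pr.gl.sol} do supply the right ingredients, but the key convergence $X^{(k)}\to X(\cdot,m,w)$ is asserted rather than proved, and it does not reduce to a citation: no Wong--Zakai statement on ${\mathcal M}$ is available in \cite{Suz} or elsewhere in the paper. If you wish to keep your two-limits argument, the cheapest repair is to borrow the paper's trick and perform the comparison after Whitney-embedding the leaf ${\mathcal L}_m$, so that \eqref{eq.WZ} applies verbatim and no random localization is required.
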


\begin{proof}
  What remains to prove is that, for each fixed $m \in {\mathcal M}$,
  $t \mapsto \Phi (m, {\bf W})_t$ almost surely coincides with
  the unique solution of  SDE \eqref{SDE_M} with $x_0 =m$.
  In Suzaki  \cite{Suz},  it is shown that this SDE has
  a non-exploding solution.
  (Precisely, the vector fields are assumed to be of $C_L^\infty$ in  \cite{Suz}.
  But, a careful reading of his proof reveals that
  assuming $C_L^3$ is enough for this part.)
  Since $m$ is fixed, \eqref{SDE_M} is now an SDE on
  the leaf ${\mathcal L}_m$, which is a smooth manifold
  and hence admits an embedding into a Euclidean space by
  Whitney's theorem.
  By extending $V_i$'s to vector fields on the Euclidean space,
  $(x_t)_{0 \le t \le T}$  can be viewed as
  a non-exploding solution of an SDE on  the Euclidean space
  with $C^3$ coefficients.
  Therefore, $x = \Phi (m, {\bf W})$ holds for almost all $w$ from $\eqref{eq.WZ}$.
\end{proof}

The second application is on the measurability of
the strong solution of SDE \eqref{SDE_M}.
Thanks to rough path theory
we can improve Suzaki \cite[Theorem 2.1]{Suz}
as follows.
Let $C_{L}([0, T], \mathcal{M})$ be the totality of continuous maps $x\colon[0, T]\to \mathcal{M}$
such that 	the image is contained in a single leaf.
Endowed with the compact-open topology
(the topology of uniform convergence),
$C_{L}([0, T], \mathcal{M})$
is a complete, separable, metrizable space. For a topological space $S$, we denote by $\mathcal{B}(S)$ the Borel
$\sigma$-field of $S$.

\begin{proposition}\label{pr.190815}
  Assume that $V_i~(0 \le i \le d)$  are leafwise $C^3$-vector fields
  on ${\mathcal M}$. Then the map
  \[
    \mathcal{M}\times C_{0}([0, T], \R^{d})\ni (m, w)
    \mapsto \Phi(m, {\bf W})\in C_{L}([0, T], \mathcal{M})
  \]
  is $\mathcal{B}(\mathcal{M})\otimes\overline{\mathcal{B}(C_{0}([0, T], \R^{d}))}^{\mu}/\mathcal{B}(C_{L}([0, T], \mathcal{M}))$-measurable, where
  $\overline{\mathcal{B}(C_{0}([0, T], \R^{d}))}^{\mu}$ is the completion of
  $\mathcal{B}(C_{0}([0, T], \R^{d}))$ by the $d$-dimensional Wiener measure $\mu$.
\end{proposition}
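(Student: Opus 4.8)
The plan is to write the map in question as the composition of $(m,w)\mapsto (m,{\bf W}(w))$ with the Lyons--It\^o map $\Phi$, and to treat the two factors separately. Since the $V_i$ are leafwise $C^3$ and ${\mathcal M}$ is compact, Proposition \ref{pr.gl.sol} shows that $\Phi(m,{\bf w})$ is defined for \emph{every} $(m,{\bf w})\in{\mathcal M}\times G\Omega_{\alpha}([0, T], \R^{d})$, Proposition \ref{pr.LI} shows that $\Phi$ is continuous, hence Borel measurable, and Lemma \ref{lem.121201} shows that the image of $\Phi$ is contained in $C_{L}([0,T],{\mathcal M})$. The latter set is closed in $C([0,T],{\mathcal M})$, so its Borel $\sigma$-field is the trace of $\mathcal{B}(C([0,T],{\mathcal M}))$; consequently it suffices to prove that
\[
  (m,w)\longmapsto (m,{\bf W}(w))
\]
is $\mathcal{B}(\mathcal{M})\otimes\overline{\mathcal{B}(C_{0}([0,T],\R^{d}))}^{\mu}\,/\,\mathcal{B}(\mathcal{M}\times G\Omega_{\alpha}([0, T], \R^{d}))$-measurable. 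Because $\mathcal{M}$ and $G\Omega_{\alpha}([0, T], \R^{d})$ are separable metric spaces, $\mathcal{B}(\mathcal{M}\times G\Omega_{\alpha}([0, T], \R^{d}))=\mathcal{B}(\mathcal{M})\otimes\mathcal{B}(G\Omega_{\alpha}([0, T], \R^{d}))$, so the whole matter reduces to showing that $w\mapsto {\bf W}(w)$ is $\overline{\mathcal{B}(C_{0}([0,T],\R^{d}))}^{\mu}/\mathcal{B}(G\Omega_{\alpha}([0, T], \R^{d}))$-measurable.

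To see this, recall from \eqref{def.nullset} that ${\bf W}(n)={\bf L}(w(n))$, where $w(n)$ is the piecewise linear approximation of $w$ on the level-$n$ dyadic partition of $[0,T]$. For fixed $n$, the first and second level components of ${\bf W}(n)$ are given by explicit (polynomial) formulas in the finitely many evaluations $\{w(jT/2^{n})\}_{0\le j\le 2^{n}}$, and each evaluation $w\mapsto w(jT/2^{n})$ is continuous on $C_{0}([0,T],\R^{d})$; a routine estimate of the $\alpha$- and $2\alpha$-H\"older seminorms of a piecewise linear path on a fixed partition then shows that $w\mapsto {\bf W}(n)(w)\in G\Omega_{\alpha}([0, T], \R^{d})$ is continuous, in particular Borel measurable. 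Moreover the set $\mathcal{N}$ of \eqref{def.nullset} is the complement of $\bigcap_{k\ge 1}\bigcup_{N\ge 1}\bigcap_{m,n\ge N}\{\,{\rm d}_{\alpha}({\bf W}(m),{\bf W}(n))<1/k\,\}$, hence Borel, and it is $\mu$-null. On $\mathcal{N}^{c}$ one has ${\bf W}(w)=\lim_{n\to\infty}{\bf W}(n)(w)$ in $G\Omega_{\alpha}([0, T], \R^{d})$; extending ${\bf W}$ to all of $C_{0}([0,T],\R^{d})$ by setting it equal to, say, the lift of the constant path $0$ on $\mathcal{N}$, we exhibit $w\mapsto {\bf W}(w)$ as a pointwise limit, off a Borel $\mu$-null set, of Borel measurable maps into the metric space $G\Omega_{\alpha}([0, T], \R^{d})$. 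Such a map is $\overline{\mathcal{B}(C_{0}([0,T],\R^{d}))}^{\mu}$-measurable (in fact Borel measurable, which is a fortiori what is needed). The completion appears in the statement only because ${\bf W}$ is intrinsically a $\mu$-a.e.\ defined object, matching the way the identification in Proposition \ref{pr.version of SDE} is phrased.

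Putting the pieces together: $(m,w)\mapsto (m,{\bf W}(w))$ is $\mathcal{B}(\mathcal{M})\otimes\overline{\mathcal{B}(C_{0}([0,T],\R^{d}))}^{\mu}$-measurable into $\mathcal{M}\times G\Omega_{\alpha}([0, T], \R^{d})$ equipped with its Borel $\sigma$-field, $\Phi$ is Borel measurable, and the composition takes values in the Borel subspace $C_{L}([0,T],\mathcal{M})$; this gives the assertion. I expect no essential difficulty: the argument is bookkeeping layered on top of Propositions \ref{pr.gl.sol} and \ref{pr.LI} and Lemma \ref{lem.121201}, and the only point deserving a moment's care is the passage through the completion $\overline{\mathcal{B}(C_{0}([0,T],\R^{d}))}^{\mu}$, i.e.\ checking that an arbitrary measurable extension of the $\mu$-a.e.\ defined map ${\bf W}$ across the null set $\mathcal{N}$ keeps the composition measurable with respect to the stated $\sigma$-fields.
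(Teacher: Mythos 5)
Your proposal is correct and follows essentially the same route as the paper: factor the map as $(m,w)\mapsto(m,{\bf W}(w))$ followed by the Lyons--It\^o map $\Phi$, use the $\overline{\mathcal{B}(C_{0}([0,T],\R^{d}))}^{\mu}$-measurability of the Brownian rough path lift, and conclude by the continuity of $\Phi$ from Proposition \ref{pr.LI}. The only difference is that you prove the measurability of $w\mapsto{\bf W}(w)$ from the dyadic approximations in \eqref{def.nullset} (and note the image lies in the closed subset $C_{L}([0,T],\mathcal{M})$), whereas the paper simply invokes these facts as known.
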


\begin{proof}
  Since Brownian rough path
  \[
    C_{0}([0, T], \R^{d})\ni w\mapsto {\bf W}={\bf W}(w)\in G\Omega_{\alpha}([0, T], \R^{d})
  \]
  is $\overline{\mathcal{B}(C_{0}([0, T], \R^{d}))}^{\mu}/\mathcal{B}(G\Omega_{\alpha}([0, T], \R^{d})$-measurable, the map
  \[
    \mathcal{M}\times C_{0}([0, T], \R^{d})\ni (m, w)
    \mapsto (m, {\bf W})\in \mathcal{M}\times G\Omega_{\alpha}([0, T], \R^{d})
  \]
  is $\mathcal{B}(\mathcal{M})\otimes\overline{\mathcal{B}(C_{0}([0, T], \R^{d}))}^{\mu}/\mathcal{B}(\mathcal{M})\otimes \mathcal{B}(G\Omega_{\alpha}([0, T], \R^{d}))$-measurable.
  By virtue of Proposition \ref{pr.LI}, the Lyons-
  It\^o map
  \[
    (m, {\bf w})\ni \mathcal{M}\times G\Omega_{\alpha}([0, T], \R^{d})\mapsto \Phi(m, {\bf w})\in C_{L}([0, T], \mathcal{M})
  \]
  is $\mathcal{B}(\mathcal{M})\otimes \mathcal{B}(G\Omega_{\alpha}([0, T], \R^{d}))/\mathcal{B}(C_{L}([0, T], \mathcal{M}))$-measurable. Therefore the composition of them is $\mathcal{B}(\mathcal{M})\otimes\overline{\mathcal{B}(C_{0}([0, T], \R^{d}))}^{\mu}/\mathcal{B}(C_{L}([0, T], \mathcal{M}))$-measureble.
  Now the proof is complete.
\end{proof}

\begin{remark}
  In Suzaki \cite[Theorem 2.1] {Suz},
  the map in Proposition \ref{pr.190815} was constructed by the Yamada-Watanabe theorem and shown to be merely
  \[
    \bigcap_{\nu}
    \overline{\mathcal{B}(\mathcal{M})\otimes
    \mathcal{B}(C_{0}([0, T], \R^{d}))}^{\nu \otimes \mu}
    /\mathcal{B}(C_{L}([0, T], \mathcal{M}))
  \]
  -measurable, where $\nu$ runs over
  all  Borel probability measure on $\mathcal{M}$.
\end{remark}

The third and fourth application are
a support theorem
and a large deviation principle.
The proofs use the continuity of Lyons-It\^o map
and are standard.

As usual, let
\[
  {\mathcal H} = \{ h = \int_0^{\cdot} h^{\prime}_s ds
  \in C_0 ([0,T], {\mathbb R}^d)
  \colon
  \| h \|_{{\mathcal H}}^2 := \int_0^T |h^{\prime}_s|^2 ds <\infty
  \}
\]
be Cameron-Martin Hilbert space.
For $l \ge 1$ and $h \in {\mathcal H}$, $h (l)  \in {\mathcal H}$
stands for the $l$th dyadic piecewise linear approximation,
i.e., $h(l)_{Tj /2^k} = h_{Tj /2^l}$ and
$h(l)$ is linear on $[T(j-1) /2^l, Tj /2^l]$
for every $j~(1 \le j \le 2^l)$.
Then, it is easy to see that $h(l) \to h$ in ${\mathcal H}$
as $l \to \infty$.
It is also known that
${\bf h}(l) := {\bf L}(h(l))
  \to {\bf h}$ as $l \to \infty$
in $G\Omega_{\alpha} ([0,T], {\mathbb R}^d)$,
where ${\bf h}$ is the natural lift of $h$
(namely, its second level path is given by
${\bf h}^2_{s,t}
  = \int_s^t (h_u -h_s) \otimes h^{\prime}_u du$).
We write ${\bf h} =L(h)$.

Denote by $x^h$
the solution of ODE \eqref{RDE2} in the Riemann-Stieltjes
sense with $w$ being replaced by $h$
(the starting point is $m$ as before).
Then, ${\mathcal H} \ni h \mapsto x^h \in C_L([0,T], {\mathcal M})$ is continuous.
Then, by taking the limit of $x^{h(l)} = \Phi (k, {\bf h}(l))$,
we have
$x^{h} = \Phi (m, {\bf h})$.

Now we present a support theorem of
Stroock-Varadhan type.
In what follows, we denote by $\tilde{\Phi}$
the continuous map
${\bf w} \mapsto
  \bigl[ t \in [0,T] \mapsto
    \Phi (\bullet, {\bf w})_t \bigr]
$
given in Proposition \ref{pr.diffeo}.
When ${\bf W}$ is Brownian rough path,
denote by $\nu$ the law of the
$C([0,T], {\rm Diff}_{L}^{k} ({\mathcal M}) )$-valued random variable
$
  \tilde\Phi ({\bf W})
$.

\begin{proposition}\label{pr.support}
  Keep the same notation and assumptions
  as in Proposition \ref{pr.diffeo}.
  Then, the (topological) support of $\nu$
  is given by the closure of
  $
    \{  \tilde\Phi ({\bf h}) \colon  h \in {\mathcal H} \},
  $
  where the closure is taken with respect to
  the topology of $C([0,T], {\rm Diff}_{L}^{k} ({\mathcal M}) )$.
\end{proposition}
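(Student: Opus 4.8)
The plan is to deduce Proposition \ref{pr.support} from two ingredients: the already-known description of the topological support of the law of Brownian rough path ${\bf W}$ in $G\Omega_{\alpha}([0, T], \R^{d})$, and the continuity of $\tilde\Phi$ established in Proposition \ref{pr.diffeo}, combined with the soft general principle that a continuous map transports the support of a Borel probability measure onto the closure of the image of that support.

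First I would record the following elementary fact: if $X$ is a second countable topological space, $Y$ is a topological space, $P$ is a Borel probability measure on $X$, and $F\colon X\to Y$ is continuous, then the support of the pushforward measure $F_*P$ equals $\overline{F(\mathrm{supp}\,P)}$. Indeed, if $x\in\mathrm{supp}\,P$ and $V$ is any open neighborhood of $F(x)$, then $F^{-1}(V)$ is an open neighborhood of $x$, so $P(F^{-1}(V))>0$, i.e.\ $(F_*P)(V)>0$; hence $F(x)\in\mathrm{supp}(F_*P)$, and since the support is closed, $\overline{F(\mathrm{supp}\,P)}\subset\mathrm{supp}(F_*P)$. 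Conversely, using that $P(\mathrm{supp}\,P)=1$ (valid because $X$ is second countable), the closed set $\overline{F(\mathrm{supp}\,P)}$ satisfies $(F_*P)(\overline{F(\mathrm{supp}\,P)})\ge P(\mathrm{supp}\,P)=1$, so $\mathrm{supp}(F_*P)\subset\overline{F(\mathrm{supp}\,P)}$.

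Next I would invoke the classical support theorem for Brownian rough path: writing $\mu_{\bf W}$ for the law of ${\bf W}$ under $\mu$, a Borel probability measure on the Polish space $G\Omega_{\alpha}([0, T], \R^{d})$, its support is the ${\rm d}_{\alpha}$-closure of $\{{\bf h}={\bf L}(h)\colon h\in{\mathcal H}\}$; this is a standard consequence of a Wong--Zakai type approximation together with the Cameron--Martin theorem (see \cite{FV}). Since $G\Omega_{\alpha}([0, T], \R^{d})$ is second countable, $\tilde\Phi\colon G\Omega_{\alpha}([0, T], \R^{d})\to C([0,T], {\rm Diff}_{L}^{k} ({\mathcal M}) )$ is continuous by Proposition \ref{pr.diffeo}, and $\nu=\tilde\Phi_*\mu_{\bf W}$ by construction, the fact above applies with $X=G\Omega_{\alpha}([0, T], \R^{d})$, $F=\tilde\Phi$ and $P=\mu_{\bf W}$, giving
\[
  \mathrm{supp}\,\nu=\overline{\tilde\Phi\bigl(\overline{\{{\bf h}\colon h\in{\mathcal H}\}}\bigr)}.
\]
Finally, for any subset $A$ of a topological space and any continuous $F$ one has $F(\overline{A})\subset\overline{F(A)}$, hence $\overline{F(\overline{A})}=\overline{F(A)}$; applying this with $A=\{{\bf h}\colon h\in{\mathcal H}\}$ yields $\mathrm{supp}\,\nu=\overline{\{\tilde\Phi({\bf h})\colon h\in{\mathcal H}\}}$, which is exactly the assertion (and recall that $\tilde\Phi({\bf h})=[t\mapsto\Phi(\bullet,{\bf h})_t]$ is the deterministic flow driven by $h$, by the identification $x^h=\Phi(m,{\bf h})$ recorded above).

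The argument is essentially soft once Proposition \ref{pr.diffeo} is available, so I do not anticipate a genuine difficulty; the one point deserving care is to cite the support theorem for Brownian rough path in the correct $\alpha$-H\"older geometric rough path topology on $G\Omega_{\alpha}([0, T], \R^{d})$ --- precisely the topology in which $\tilde\Phi$ was proved continuous --- rather than in a weaker $p$-variation or uniform topology. If a self-contained treatment were preferred, that support statement could itself be reproved from dyadic piecewise linear approximations in the spirit of \eqref{def.nullset}--\eqref{eq.WZ}, but citing \cite{FV} suffices.
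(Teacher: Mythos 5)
Your proposal is correct and follows essentially the same route as the paper: both proofs combine the support theorem for Brownian rough path in $G\Omega_{\alpha}([0,T],\R^{d})$ (equivalently, density of $\{{\bf h}\colon h\in{\mathcal H}\}$ together with full support of the law of ${\bf W}$) with the continuity of $\tilde\Phi$ from Proposition \ref{pr.diffeo}. The only difference is presentational: you package the topological step as a general ``support of a pushforward under a continuous map'' lemma, while the paper verifies the two inclusions directly, the reverse one by noting that nonempty open subsets of $G\Omega_{\alpha}([0,T],\R^{d})$ have positive measure.
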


\begin{proof}
  First, recall that $\{ {\bf h} \colon  h \in {\mathcal H} \}$
  is dense in $G\Omega_{\alpha}([0, T], \R^{d})$
  and the support of the law of Brownian
  rough path is  $G\Omega_{\alpha}([0, T], \R^{d})$.
  (See \cite[Section 13.7]{FV} for example.)

  Since $\tilde\Phi$ is continuous
  and ${\bf L} ({\mathcal H})$ is dense,
  the image of $\tilde\Phi$ equals the closure of
  $\{  \tilde\Phi ({\bf h}) \colon  h \in {\mathcal H} \}$.
  Hence, the support of $\nu$ is included
  in the closure.
  We prove the reverse inclusion.
  Take any $\xi$ from the closure.
  Then,
  since any open neighborhood of $\xi$ contains
  at least one $\tilde\Phi ({\bf h})$,
  its inverse image by $\tilde\Phi$ is a non-empty
  open subset in $G\Omega_{\alpha}([0, T], \R^{d})$
  and is therefore of positive measure.
  Hence, $\xi$ belongs to the support of $\nu$.
\end{proof}

Next we present a large deviation principle of
Freidlin-Wentzell type.
For more information on large deviations,
see Dembo-Zeitouni \cite{DZ} for example.

For $\epsilon >0$, we consider the $\epsilon$-scaled
version of  RDE \eqref{RDE2}:
\begin{equation}\label{RDEscale}
  dx_{t}= \epsilon
  \sum_{i=1}^d V_i (x_{t})dw^i_{t} + V_0 (x_{t})dt.
\end{equation}
Its solution with initial value $m$
actually coincides with
$\Phi (m, \epsilon{\bf w})$, where we set
$\epsilon{\bf w} =(\epsilon{\bf w}^1, \epsilon^2{\bf w}^2)$.
When ${\bf W}$ is Brownian rough path,
denote by $\nu^{\epsilon}$ the law of
$
  \tilde\Phi (\epsilon{\bf W})
$.

\begin{proposition}\label{pr.LD}
  Keep the same notation and assumptions
  as in Proposition \ref{pr.diffeo}.
  Then,
  $\{ \nu^{\epsilon}\}_{\epsilon >0}$ satisfies a large deviation
  principle as $\epsilon \searrow 0$
  with a good rate function
  $J \colon C([0,T], {\rm Diff}_{L}^{k} ({\mathcal M}) )
    \to [0, \infty]$, where
  $J(\xi) := \inf \{ \|h\|^2_{{\mathcal H}}/2  \colon
    \mbox{ $h \in {\mathcal H}$ with $\xi = \tilde \Phi ({\bf h})$}
    \}$
  (it is  understood that $\inf \emptyset = \infty$).
\end{proposition}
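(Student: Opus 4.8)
The plan is to deduce the large deviation principle from the generalized Schilder theorem for the Brownian rough path together with the contraction principle; no new analysis is needed beyond what has already been established.

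First I would recall the generalized Schilder theorem. Writing $\epsilon{\bf W} = (\epsilon{\bf W}^1, \epsilon^2{\bf W}^2)$ for the dilation of the Brownian rough path ${\bf W}$, it is classical (see \cite[Chapter 13]{FV} for example) that the family of laws of $\epsilon{\bf W}$ on $G\Omega_{\alpha}([0, T], \R^{d})$ satisfies a large deviation principle as $\epsilon \searrow 0$ with good rate function $\tilde{I}$ given by $\tilde{I} ({\bf w}) = \|h\|_{{\mathcal H}}^2/2$ if ${\bf w} = {\bf h}$ for some (necessarily unique) $h \in {\mathcal H}$, and $\tilde{I}({\bf w}) = +\infty$ otherwise. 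Observe that the solution of RDE \eqref{RDEscale} with initial value $m$ is $\Phi (m, \epsilon{\bf W})$, so the $C([0,T], {\rm Diff}_{L}^{k} ({\mathcal M}) )$-valued random variable under consideration is precisely $\tilde\Phi (\epsilon{\bf W})$ and $\nu^{\epsilon}$ is the push-forward of the law of $\epsilon{\bf W}$ by $\tilde\Phi$.

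Next I would invoke Proposition \ref{pr.diffeo}: the map $\tilde\Phi \colon G\Omega_{\alpha}([0, T], \R^{d}) \to C([0,T], {\rm Diff}_{L}^{k} ({\mathcal M}) )$ is continuous, and its target is a Polish (in particular Hausdorff) space by the discussion in Section \ref{sec.foliated spaces}. The contraction principle (see \cite[Theorem 4.2.1]{DZ}) therefore applies and shows that $\{\nu^{\epsilon}\}_{\epsilon > 0}$ satisfies a large deviation principle with good rate function $J(\xi) = \inf \{ \tilde{I}({\bf w}) \colon \tilde\Phi({\bf w}) = \xi \}$; the goodness of $J$ (lower semicontinuity together with compact sublevel sets) is part of the conclusion of the contraction principle and follows from the goodness of $\tilde{I}$ and the continuity of $\tilde\Phi$.

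Finally, since $\tilde{I}$ equals $+\infty$ off the set $\{ {\bf h} \colon h \in {\mathcal H} \}$, the infimum defining $J(\xi)$ is unchanged if we restrict to ${\bf w} = {\bf h}$; recalling that $x^h = \Phi (m, {\bf h})$, so that $\tilde\Phi({\bf h})$ is the flow generated by the ODE driven by $h$, this yields exactly $J(\xi) = \inf \{ \|h\|_{{\mathcal H}}^2/2 \colon h \in {\mathcal H} \mbox{ with } \xi = \tilde\Phi({\bf h}) \}$ with the convention $\inf\emptyset = \infty$, as claimed. There is essentially no obstacle in this argument; the only points deserving care are to cite the generalized Schilder theorem in the version valid for the $\alpha$-Hölder rough path topology used here, and to note that $\epsilon{\bf W}$ is genuinely the dilation of ${\bf W}$ so that it is the driving noise of RDE \eqref{RDEscale}.
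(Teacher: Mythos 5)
Your proposal is correct and follows essentially the same route as the paper: the Freidlin--Wentzell/Schilder-type LDP for the scaled Brownian rough path $\epsilon{\bf W}$ on $G\Omega_{\alpha}([0,T],\R^d)$ combined with the continuity of $\tilde\Phi$ from Proposition \ref{pr.diffeo} and the contraction principle of Dembo--Zeitouni, with the rate function identified by restricting the infimum to lifts of Cameron--Martin paths. No gaps; the paper's own proof is the same argument, noting only that the contraction principle requires merely a Hausdorff target space.
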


\begin{proof}
  Recall that the law of $\epsilon {\bf W}$ satisfies
  a large deviation
  principle as $\epsilon \searrow 0$
  with a good rate function
  $I \colon
    G\Omega_{\alpha}([0, T], \R^{d}) \to [0, \infty]$,
  where
  $I({\bf w}) :=  \|h\|^2_{{\mathcal H}}/2$
  if ${\bf w} = {\bf L} (h)$ for some $h \in {\mathcal H}$
  and $I({\bf w}) :=\infty$ if no such $h \in {\mathcal H}$ exists.
  (See \cite[Section 13.6]{FV} for example.)
  From this fact, the continuity of $\tilde\Phi$,
  and the contraction principle,
  the claim of this proposition immediately follows.
  (For the contraction principle,
  see Dembo-Zeitouni \cite[Theorem 4.2.1]{DZ},
  in which the target set of a continuous map
  is only required to be a Hausdorff topological space.)
\end{proof}

\noindent
{\bf Acknowledgement:}~
The authors are grateful to Professors Hitoshi Moriyoshi
and Atsushi Atsuji for stimulating discussions
and helpful comments.
The first named author
is partially supported by JSPS KAKENHI Grant Number JP15K04922
and JP18F18314.



\bigskip
\begin{flushleft}
  \begin{tabular}{ll}
    Yuzuru \textsc{Inahama}
    \\
    Faculty of Mathematics,
    \\
    Kyushu University,
    \\
    744 Motooka, Nishi-ku, Fukuoka, 819-0395, JAPAN.
    \\
    Email: {\tt inahama@math.kyushu-u.ac.jp}
  \end{tabular}
\end{flushleft}

\begin{flushleft}
  \begin{tabular}{ll}
    Kiyotaka \textsc{Suzaki}
    \\
    Headquarters for Admissions and Education,
    \\
    Kumamoto University,
    \\
    Kurokami 2-40-1, Chuo-ku, Kumamoto, 860-8555, JAPAN.
    \\
    Email: {\tt k-suzaki@kumamoto-u.ac.jp}
  \end{tabular}
\end{flushleft}

\begin{thebibliography}{99}

  \bibitem{Bai} I. Bailleul,
  Flows driven by rough paths.
  Rev.\,Mat.\,Iberoam.\,31 (2015), no. 3, 901--934.

  \bibitem{C} A. Candel,
  The harmonic measures of Lucy Garnett. Adv.\ Math.\ 176 (2003), 187--247.

  \bibitem{CC1} A. Candel and L. Conlon,
  Foliations I, Amer.\ Math.\ Soc., Providence, 2000.

  \bibitem{CC2} A. Candel and L. Conlon,
  Foliations II,
  Amer.\ Math.\ Soc., Providence, 2003.

  \bibitem{CW}
  T. Cass and M. P. Weidner,
  Tree algebras over topological vector spaces in rough path theory. arXiv:1604.07352v2, (2017).

  \bibitem{DZ} A. Dembo and O. Zeitouni,
  Large deviations techniques and applications. Second edition.
  Applications of Mathematics (New York), 38. Springer-Verlag, New York, 1998.

  \bibitem{Dr1}
  B. K. Driver,
  Global existence of geometric rough flows.
  arXiv:1810.03708, (2018).

  \bibitem{Dr2}
  B. K. Driver,
  On Truncated logarithms of flows on a Riemannian manifold.
  arXiv:1810.02414, (2018).

  \bibitem{FH} P. Friz and M. Hairer,
  A course on rough paths. With an introduction to regularity structures. Springer,  2014.

  \bibitem{FV} P. Friz and N. Victoir,
  Multidimensional stochastic processes as rough paths.
  Cambridge University Press,  2010.

  \bibitem{G} L. Garnett,
  Foliations, the ergodic theorem and Brownian motion.
  J. Funct.\ Anal.\ 51 (1983), 285--311.

  \bibitem{H}
  M. W. Hirsch,
  Differential topology.
  Graduate Texts in Mathematics, 33. Springer-Verlag, New York, 1994.

  \bibitem{Ka} M. Kanai,
  A new approach to the rigidity of discrete group actions.
  Geom.\ Funct.\ Anal.\ 6 (1996), 943--1056.

  \bibitem{Ku} H. Kunita,
  Stochastic flows and stochastic differential equations.
  Cambridge University Press, Cambridge, 1990.

  \bibitem{LCL}
  T. J. Lyons, M. Caruana, T. L\'evy,
  Differential equations driven by rough paths.
  Lecture Notes in Mathematics, 1908. Springer, 2007.

  \bibitem{LQ}
  T. Lyons,  Z. Qian,
  System control and rough paths.
  Oxford University Press,  2002.

  \bibitem{MS} C. C. Moore and C. L. Schochet,
  Global analysis on foliated spaces 2nd.\ ed.,
  Cambridge University Press, New York, 2006.

  \bibitem{Ngu}
  V.-A. Nguy\^en,
  Oseledec multiplicative ergodic theorem for laminations.
  Mem.\,Amer.\,Math.\,Soc.\,246 (2017), no. 1164.

  \bibitem{Suz}
  K. Suzaki,
  An SDE approach to leafwise diffusions on foliated spaces and its applications.
  Tohoku Math.\,J. (2) 67 (2015), no. 2, 247--272.




\end{thebibliography}
\end{document}